\documentclass[11pt]{article}
\usepackage{authblk}
\usepackage[margin=1in]{geometry}
\usepackage{amsmath,amssymb,amsthm,mathtools}
\usepackage{booktabs,tabularx,array}
\usepackage{enumitem}
\usepackage{placeins}
\usepackage{float}
\usepackage{hyperref}
\usepackage[nameinlink,capitalise]{cleveref}
\newcolumntype{Y}{>{\raggedright\arraybackslash}X}

\numberwithin{equation}{section}
\hypersetup{
  colorlinks=true,
  linkcolor=blue,
  urlcolor=blue,
  citecolor=blue
}
\usepackage{bm}
\usepackage{xcolor}

\theoremstyle{plain}
\newtheorem{theorem}{Theorem}[section]
\newtheorem{proposition}[theorem]{Proposition}
\newtheorem{lemma}[theorem]{Lemma}
\newtheorem{corollary}[theorem]{Corollary}

\theoremstyle{definition}
\newtheorem{definition}[theorem]{Definition}

\newtheorem{assumption}[theorem]{Assumption}

\theoremstyle{remark}
\newtheorem{remark}[theorem]{Remark}

\newcommand{\R}{\mathbb{R}}

\newcommand{\dd}{\,\mathrm{d}}
\newcommand{\Vol}{\mathrm{Vol}}
\newcommand{\E}{\mathbb E}

\newcommand{\KL}{\mathrm{KL}}

\newcommand{\Vspec}{\mathcal V^{\mathrm{spec}}}
\newcommand{\Pcal}{\mathcal P}
\newcommand{\Ucal}{\mathcal U}

\newcommand{\Acal}{\mathcal A}

\newcommand{\Hcal}{\mathcal H}

\newcommand{\Bcal}{\mathcal B}
\newcommand{\norm}[1]{\left\|#1\right\|}

\newcommand{\ip}[2]{\left\langle #1,#2\right\rangle}
\newcommand{\one}{\mathbf 1}

\newcommand{\argmin}{\operatorname*{arg\,min}}

\newcommand{\eps}{\varepsilon}
\newcommand{\cD}{\mathcal D}
\newcommand{\cJ}{\mathcal J}
\newcommand{\cP}{\mathcal P}
\newcommand{\bbone}{\mathbf 1}

\newcommand{\PP}{\mathbb P}

\title{Density Estimation on Compact Manifolds under Intrinsic Spectral Block Variation}

\author{Olga Klopp \\\vskip -0.4 cm ESSEC Business School\\
	Fedor Noskov \\ HSE University 
}

\date{}

\begin{document}
\maketitle

\begin{abstract}
We introduce an intrinsic spectral sparsity model for nonparametric density estimation on compact connected Riemannian manifolds. Instead of penalizing coefficients in an arbitrarily chosen Laplace--Beltrami eigenbasis, we group each complete eigenspace and measure the Hilbert norm of its spectral component. The resulting block-variation space is basis independent and isometry invariant. We establish its structural, atomic, and nonlinear approximation properties and clarify its relation to Sobolev, Besov, and coefficientwise spectral $\ell^1$ classes. We then construct a coordinate-free block-shrinkage estimator and prove a nonasymptotic signal-dependent $L^2$-oracle inequality that adapts to the unknown set of detectable eigenspaces. Under polynomial spectral growth, the risk theory separates the number of spectral blocks from their multiplicities and exhibits two regimes: one driven by a single high-dimensional eigenspace and the other by cumulative spectral complexity. Under matching spectral-growth and nondegeneracy assumptions, corresponding minimax lower bounds show that this multiplicity dependence is intrinsic, with sharp consequences for spheres and the rotation group $SO(3)$. Finally, we develop a positive, normalized, block-penalized exponential spectral sieve for log-densities and derive likelihood oracle inequalities together with expected Kullback--Leibler, Hellinger, and $L^2$ risk bounds. The resulting framework provides a geometry-respecting theory of sparse density estimation that remains invariant under changes of eigenbasis.
\end{abstract}

\noindent\textbf{Keywords.}
Block thresholding; 
compact Riemannian manifolds;
exponential spectral sieves;
intrinsic spectral block variation;
Laplace--Beltrami eigenspaces;
minimax rates;
nonparametric density estimation;
oracle inequalities.

\noindent\textbf{Mathematics Subject Classification (2020).}
Primary 62G07; Secondary 62C20, 62G20, 58J50.

\section{Introduction}
\label{sec:introduction}

Spectral methods are natural tools for nonparametric inference on a
Riemannian manifold because the Laplace--Beltrami operator is determined by
the geometry rather than by a coordinate chart. Smoothness can therefore be
expressed intrinsically through spectral weights. Sparsity is more delicate:
when an eigenvalue has multiplicity greater than one, its individual
eigenfunctions are not canonical, and coefficientwise sparsity changes after
an orthogonal rotation of the basis inside the eigenspace.

The sphere gives the simplest illustration. On \(\mathbb S^2\), a rotation
mixes the \(2\ell+1\) spherical-harmonic coefficients of degree \(\ell\), but
it leaves their Euclidean norm unchanged. This suggests imposing sparsity
across complete eigenspaces while leaving the direction inside an active
eigenspace unrestricted. For a fixed Riemannian metric, and in the absence of
additional invariant structure within an eigenspace, these complete
Laplace eigenspaces  are the spectral blocks determined by the operator without introducing any auxiliary basis or decomposition.
The resulting model
captures both the gain from ignoring inactive blocks and the unavoidable cost
of estimating an unknown direction inside an active high-dimensional one. When the spectrum is simple, the blocks are one-dimensional and the model reduces to weighted coefficientwise spectral sparsity; the distinctive multiplicity effects arise on manifolds with nontrivial eigenspaces.

Probability distributions on compact manifolds arise when observations are directions, orientations, or periodic configurations, with spheres, rotation groups, and tori providing canonical examples. In such problems, the notion of a simple or sparse density should not depend on a coordinate system, a physical reference frame, or an arbitrary choice of eigenfunctions inside a repeated Laplace eigenspace. The block-variation model is intended for densities that are spectrally compressible at the level of complete eigenspaces: only a relatively small number of harmonic degrees or spectral levels carry substantial energy, while the direction within an active eigenspace may remain unrestricted. From a statistical viewpoint, this is an infinite-dimensional group-sparsity problem in which the groups are determined by the geometry rather than selected by the analyst. A dense spectral estimator pays for every eigenspace below a cutoff, whereas coefficientwise thresholding can produce basis-dependent selections. The central question is therefore whether one can adapt to the unknown active eigenspaces while preserving geometric invariance, and what unavoidable price is imposed by their multiplicities. The oracle inequalities and matching lower bounds below quantify precisely this trade-off. The framework is designed for global spectral compressibility and is complementary to needlet and wavelet procedures aimed at spatially localized or multiscale structure.

Let \((M,g)\) be a compact connected smooth Riemannian manifold without
boundary, let \(\nu\) be normalized Riemannian volume, and suppose that
\[
        X_1,\ldots,X_n \sim p_0\,d\nu
\]
independently, where \(p_0\) is an unknown density. Let
\(L=-\Delta_M\) be the nonnegative Laplace--Beltrami operator, with distinct
eigenvalues, eigenspaces, multiplicities, and orthogonal projectors
\[
        0=\mu_0<\mu_1<\mu_2<\cdots,
        \qquad
        E_j=\ker(L-\mu_j I),
        \qquad
        d_j=\dim(E_j),
        \qquad
        P_j:L^2(M)\to E_j.
\]
For \(s\geq0\), define the intrinsic spectral block-variation norm
\begin{equation}\label{eq:intro-block-variation-norm}
        \|f\|_{\Vspec_s(M)}
        :=
        \sum_{j\geq0}
        (1+\mu_j)^{s/2}\|P_jf\|_{L^2(M)}
\end{equation}
and the associated density class
\[
        \Pcal_s(R,b)
        :=
        \left\{
        p\geq0:
        \int_M p\,d\nu=1,\quad
        \|p\|_{\Vspec_s(M)}\leq R,\quad
        \|p\|_{L^\infty(M)}\leq b
        \right\}.
\]
Because \(M\) is connected and \(\nu(M)=1\), the zero eigenspace is
\(E_0=\operatorname{span}\{\one\}\) and
\[
        P_0f=\left(\int_M f\,d\nu\right)\one.
\]
Consequently, every density satisfies \(P_0p=\one\) and
\begin{equation}\label{eq:density-constant-block}
        \|p\|_{\Vspec_s(M)}
        =
        1+\sum_{j\geq1}(1+\mu_j)^{s/2}\|P_jp\|_{L^2(M)}.
\end{equation}
Thus \(\Pcal_s(R,b)\) can be nonempty only when \(R\geq1\) and \(b\geq1\),
and \(R-1\) is the variation budget available for nonconstant perturbations.
Accordingly, \(R-1\), rather than \(R\), is the effective radius governing
both the upper and lower statistical bounds.

\paragraph{Intrinsic sparsity and statistical adaptation}
If
\[
        a_j:=(1+\mu_j)^{s/2}\|P_jf\|_{L^2(M)},
\]
then
\[
        \|f\|_{H^s(M)}
        =
        \left(\sum_{j\geq0}a_j^2\right)^{1/2},
        \qquad
        \|f\|_{\Vspec_s(M)}
        =
        \sum_{j\geq0}a_j.
\]
Sobolev regularity therefore controls weighted block energy in \(\ell^2\),
whereas spectral block variation imposes an \(\ell^1\) condition across the
exact Laplace eigenspaces. It penalizes dispersion across eigenspaces without
selecting coordinates within an active block. Since the norm uses only the
spectral projectors \(P_j\) and the Hilbert norms
\(\|P_jf\|_{L^2(M)}\), it is independent of the basis chosen inside each
eigenspace and invariant under isometries. Section~\ref{sec:spectral-regularity}
develops the corresponding structural, atomic, and nonlinear-approximation
theory.

The statistical analogue is the oracle expression
\begin{equation}\label{eq:intro-oracle-expression}
        \sum_{j\geq1}
        \min\left\{
        \|P_jp_0\|_{L^2(M)}^2,\tau_j^2
        \right\}.
\end{equation}
Here \(\tau_j\) is the detection threshold, or noise scale, for the empirical
projection onto \(E_j\). We write \(\delta_n\) for an effective squared-noise
level per spectral degree of freedom, meaning that
\(\tau_j^2\lesssim\delta_n d_j\) on the retained blocks; its precise definition
and calibration are given in Section~\ref{sec:block-shrinkage}. A weak block may
be discarded at the cost of its signal energy, while a detectable block is
estimated at its noise level. Thus detectable blocks in a set \(S\) cost of
order \(\delta_n\sum_{j\in S}d_j\), rather than the dimension of every block
below a frequency cutoff. Invariant block sparsity can therefore remove the
cost of inactive eigenspaces, but it cannot remove the dimension cost within
an active one.

\subsection{Contributions}
\label{subsec:contributions}

\paragraph{A basis-independent oracle inequality}
We estimate each empirical eigenspace projection and apply Euclidean soft
thresholding to the whole block. 
Arbitrary coordinates may be used for computation, but the shrinkage factor depends only on the block norm, and the resulting estimator is independent of the chosen eigenbasis. We prove
a nonasymptotic \(L^2\)-oracle inequality with the blockwise minimum in
\eqref{eq:intro-oracle-expression}, a spectral truncation term, and a
deviation remainder; the same bound holds after projection onto the closed
convex set of densities. The estimator therefore adapts to the unknown set of
detectable eigenspaces rather than paying automatically for every block below
a cutoff. A Monte Carlo study on $\mathbb S^2$ in
Section~\ref{sec:experiments} illustrates this signal-dependent
behavior: as the sample size increases, the active harmonic degrees
are progressively retained, while inactive degrees are largely
suppressed.

\paragraph{Multiplicity-sensitive rates and matching lower bounds}
Assume that, for exponents \(\alpha,\gamma\geq0\),
\[
        d_j\lesssim(1+\mu_j)^\gamma,
        \qquad
        \sum_{1\leq j:\mu_j\leq\Lambda}d_j
        \lesssim(1+\Lambda)^{\alpha+\gamma}.
\]
The exponent \(\gamma\) controls individual eigenspace growth, while
\(\alpha+\gamma\) controls cumulative spectral dimension. With the effective
squared-noise level \(\delta_n\) introduced above, the resulting rate over
\(\Pcal_s(R,b)\) is
\[
        \rho_{\alpha,\gamma,s}(R-1,\delta_n)
        =
        \begin{cases}
        (R-1)^{\frac{2\gamma}{s+\gamma}}
        \delta_n^{\frac{s}{s+\gamma}},
        &0<s<\gamma,\\[0.8em]
        (R-1)^{\frac{2(\alpha+\gamma)}{s+\gamma+2\alpha}}
        \delta_n^{\frac{s+\alpha}{s+\gamma+2\alpha}},
        &s\geq\gamma.
        \end{cases}
\]
Under matching lower-growth and nondegeneracy assumptions, the minimax lower
bounds are stated directly in terms of the sample size. For fixed density
envelopes, they match the two branches above after the substitution
\(\delta_n\asymp n^{-1}\), and hence recover the same powers of \(R-1\) and
\(n\). On \(S^m\), \(m\geq2\), and on \(SO(3)\), index-dependent coding levels
exploit multiplicity growth and make the upper estimator attain
\(\delta_n\asymp b/n\), which is equivalent to the \(n^{-1}\) scale when \(b\)
is fixed.

\paragraph{Intrinsic regularity and approximation}
We prove that \(\Vspec_s(M)\) is a Banach space with dense finite spectral
sums, and establish basis independence, isometry invariance, monotone
smoothness embeddings, an atomic variation-norm characterization, and an
attained basis-envelope identity for coefficientwise spectral
\(\ell^1\)-norms. We also prove
\(
\Vspec_s(M)\hookrightarrow B^s_{2,1}(M)\hookrightarrow H^s(M)
\)
and nonlinear approximation bounds in both \(H^s\) and \(L^2\). The
nonlinear result counts retained eigenspaces, whereas the linear truncation
result counts total spectral degrees of freedom. Together they show that the
class records how weighted energy is distributed among exact eigenspaces, not
merely its total Sobolev size.

\paragraph{A positive likelihood counterpart}
We also consider
\[
        p_u(x)=\exp\{u(x)-A(u)\},
        \qquad
        A(u)=\log\int_M e^{u(y)}\,d\nu(y),
\]
with a group penalty on the complete eigenspace components of the centered
log-density. The estimator is positive and normalized by construction. On
bounded log-density sieves, we prove a Kullback--Leibler oracle inequality and
derive expected Kullback--Leibler, Hellinger, and \(L^2\) risk bounds. Writing \(q>0\) for the spectral block-variation order of the centered
log-density, and assuming that, for some \(\beta\geq0\),
\[
        \sup_{x\in M}K_j(x,x)\lesssim(1+\mu_j)^\beta,
\]
we obtain explicit multiplicity-sensitive rates whenever \(q\geq\beta\). On homogeneous manifolds one may take
\(\beta=\gamma\). This positive theory is complementary to the linear theory,
because regularity is imposed on the centered log-density rather than directly
on the density. Taken together, these results provide a density-estimation theory for the
isometry-invariant block geometry determined by a fixed Laplace operator.

\subsection{Relation to prior work and scope of novelty}
\label{subsec:intro-related-work}

\paragraph{Invariant spectral sparsity on the sphere}
The closest geometric precursor is the rotation-invariant spherical-harmonic
regularization of Le Gia, Sloan, Womersley, and
Wang~\cite{le_gia_sloan_womersley_wang_2020}. Their mixed
\(\ell^1/\ell^2\) penalty groups all coefficients of a fixed harmonic degree.
With weights \((1+\ell(\ell+1))^{s/2}\), it coincides with
\eqref{eq:intro-block-variation-norm} on \(\mathbb S^2\). This construction
addresses the basis dependence of coefficientwise \(\ell^1\) regularization
identified by Cammarota and Marinucci~\cite{cammarota_marinucci_2015}.
Related invariant whole-degree penalties and sparse representations on the
sphere appear in Li and Chen~\cite{li_chen_2024_group_sparse} and Greco and
Marinucci~\cite{greco_marinucci_2026_sparsity}. The present work replaces
spherical-harmonic degrees by complete Laplace eigenspaces on a general compact
manifold and develops the corresponding density-estimation theory, including
oracle inequalities, upper rates, and minimax lower bounds.

\paragraph{Block thresholding, needlets, and manifold density estimation}
Classical spectral density estimation on closed Riemannian manifolds using
Laplace--Beltrami Fourier expansions was studied by
Hendriks~\cite{hendriks1990nonparametric}. Our focus is different: we impose
a basis-independent block-sparsity geometry on complete eigenspaces and
develop signal-dependent oracle inequalities and multiplicity-sensitive
minimax theory. Block thresholding and its oracle theory are classical in
wavelet statistics; see, for example, Cai~\cite{Cai1999}. On geometric domains,
Durastanti~\cite{durastanti_2015_block} studied needlet block thresholding on
the sphere, Kerkyacharian, Nickl, and
Picard~\cite{KerkyacharianNicklPicard2012} developed adaptive needlet density
estimators and confidence bands on compact homogeneous manifolds, and
Cleanthous, Georgiadis, Kerkyacharian, Petrushev, and
Picard~\cite{cleanthous_et_al_2020_density} treated kernel and wavelet density
estimation in a broader spectral metric-space framework. Those methods use
localized multiresolution systems and are naturally linked to Besov
regularity. Our blocks are instead the exact eigenspaces of the fixed
Laplace--Beltrami operator. This sacrifices spatial localization but gives an
intrinsic decomposition on every compact Riemannian manifold and separates
block selection from within-block dimension. On manifolds carrying additional
symmetry, finer invariant substructures may be available; the present paper
deliberately works at the level of complete Laplace eigenspaces.

 \paragraph{Spectral Barron spaces}
Weighted coefficientwise spectral $\ell^{1}$-spaces have been used in PDE
and neural-network approximation theory; see Lu, Lu, and
Wang~\cite{lu2021apriori}, Liao and Ming~\cite{liao2025spectral}, and, for
compact groups, Mensah and Aremua~\cite{mensah2025spectral}. The norm~\eqref{eq:intro-block-variation-norm}
is related to these spaces both as a basis-independent grouped envelope and
as an atomic variation norm over unit directions contained in single Laplace
eigenspaces; see~\eqref{eq:basis-envelope-characterization}. Abstractly, it may also be viewed as a weighted
decomposition-space norm associated with the exact spectral resolution. Our
emphasis, however, is on its basis-free eigenspace geometry and the resulting
density-estimation theory. We therefore use \emph{spectral block variation}
as the primary terminology. An active block may point in an arbitrary
direction in a $d_j$-dimensional eigenspace, so the model does not by itself
impose ridge-function structure or imply dimension-independent statistical
rates.

\paragraph{Positive spectral models}
The positive estimator lies in the tradition of exponential-series and sieve
likelihood density estimation, including Barron and
Sheu~\cite{barron_sheu_1991}. Harmonic exponential families on spheres, tori,
and rotation groups were developed by Cohen and
Welling~\cite{cohen_welling_2015}. Our construction combines this likelihood
framework with a group penalty indexed by complete Laplace eigenspaces,
thereby preserving basis independence and retaining the
multiplicity-sensitive oracle geometry.

\subsection{Organization}

Section~\ref{sec:spectral-regularity} develops the block-variation spaces.
Section~\ref{sec:block-shrinkage} establishes the block-shrinkage oracle
inequality and upper bounds, and Section~\ref{sec:lower_bound} proves matching
minimax lower bounds. Sections~\ref{sec:positive-spectral-sieves} and
\ref{sec:experiments} present the positive likelihood extension and the study
on $\mathbb S^2$, respectively. Section~\ref{sec:discussion} concludes, and
proofs and technical details are collected in the appendix; see
Appendix~\ref{app:structural-proofs}--\ref{app:fano-lower-bound}.


\section{Intrinsic spectral block-variation regularity}\label{sec:spectral-regularity}

Throughout this section \((M,g)\) is compact, connected, smooth, and
boundaryless, and \(\nu\) denotes normalized Riemannian volume. We work in
real \(L^2(M)\) unless explicitly stated otherwise. The complex-valued case
is identical after inserting complex conjugates in inner products and
kernels.

\subsection{Spectral preliminaries}

We use the standard spectral calculus for the nonnegative Laplace--Beltrami
operator on a closed manifold; see, for example,
\cite{rosenberg1997laplacian,chavel1984eigenvalues}.  Its spectrum is discrete,
each eigenspace is finite-dimensional, and
\[
        L^2(M)=\bigoplus_{j\geq0}E_j,
        \qquad
        E_j=\ker(L-\mu_jI),
        \qquad
        0=\mu_0<\mu_1<\cdots.
\]
Because \(M\) is connected and \(\nu(M)=1\),
\begin{equation}\label{eq:constant-projector}
        E_0=\operatorname{span}\{\one\},
        \qquad
        P_0f=\left(\int_M f\,d\nu\right)\one.
\end{equation}
If \(P_j\) is the orthogonal projector onto \(E_j\), then
\[
        f=\sum_{j\geq0}P_jf\quad\text{in }L^2(M),
        \qquad
        \|f\|_{L^2(M)}^2=\sum_{j\geq0}\|P_jf\|_{L^2(M)}^2.
\]
For \(\Lambda\geq0\), set
\[
        E_{\leq\Lambda}:=\bigoplus_{\mu_j\leq\Lambda}E_j,
        \qquad
        \Pi_{\leq\Lambda}:=\sum_{\mu_j\leq\Lambda}P_j,
        \qquad
        D(\Lambda):=\sum_{\mu_j\leq\Lambda}d_j.
\]
Weyl's law yields a constant \(C_W=C_W(M,g)\) such that
\begin{equation}\label{eq:weyl}
        D(\Lambda)\leq C_W(1+\Lambda)^{m/2},
        \qquad \Lambda\geq0.
\end{equation}
For \(r\in\mathbb R\), the spectral Sobolev norm is
\[
        \|f\|_{H^r(M)}^2
        :=\sum_{j\geq0}(1+\mu_j)^r\|P_jf\|_{L^2(M)}^2.
\]
This gives the usual Sobolev space, with an equivalent derivative norm when
\(r\) is a nonnegative integer; these standard facts follow from elliptic
regularity and the functional calculus of \(I+L\)
\cite{rosenberg1997laplacian}.

\subsection{Definition and basic properties}

\begin{definition}[Intrinsic spectral block-variation space]\label{def:block-variation}
For \(s\geq0\), define
\[
        \|f\|_{\Vspec_s(M)}
        :=
        \sum_{j\geq0}(1+\mu_j)^{s/2}\|P_jf\|_{L^2(M)}.
\]
The intrinsic spectral block-variation space of order \(s\) is
\[
        \Vspec_s(M)
        :=
        \left\{f\in L^2(M):\|f\|_{\Vspec_s(M)}<\infty\right\}.
\]
For \(R>0\), write
\[
        \Vspec_s(R)
        :=
        \left\{f\in L^2(M):\|f\|_{\Vspec_s(M)}\leq R\right\}.
\]
\end{definition}

If \(\{\varphi_{j,\ell}\}_{\ell=1}^{d_j}\) is any orthonormal basis of
\(E_j\), then
\[
        \|P_jf\|_{L^2(M)}
        =
        \left(
        \sum_{\ell=1}^{d_j}
        |\langle f,\varphi_{j,\ell}\rangle_{L^2(M)}|^2
        \right)^{1/2}.
\]
Thus the definition is independent of the basis chosen inside each
eigenspace and  it imposes sparsity across the exact spectral subspaces determined by $L$, while leaving the direction within an active block unrestricted.

\begin{proposition}[Structural properties]\label{prop:structural}
For every \(s\geq0\), \(\Vspec_s(M)\) is a Banach space, finite spectral
sums are dense in \(\Vspec_s(M)\), and
\[
        \|f\|_{L^2(M)}\leq\|f\|_{\Vspec_s(M)}.
\]
If \(t\geq s\), then \(\Vspec_t(M)\hookrightarrow\Vspec_s(M)\)
continuously and
\[
        \|f\|_{\Vspec_s(M)}\leq\|f\|_{\Vspec_t(M)}.
\]
Moreover, if \(T:M\to M\) is an isometry, then
\[
        \|f\circ T\|_{\Vspec_s(M)}=\|f\|_{\Vspec_s(M)}.
\]
\end{proposition}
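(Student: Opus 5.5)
The plan is to identify \(\Vspec_s(M)\) isometrically with a weighted \(\ell^1\)-sum of Hilbert spaces. Consider the map
\[
        \Phi:f\mapsto (P_jf)_{j\geq0},
\]
from \(\Vspec_s(M)\) into
\[
        \ell^1_w\Bigl(\textstyle\bigoplus_j E_j\Bigr)
        :=\Bigl\{(g_j)_{j\geq0}:\ g_j\in E_j,\ \sum_j(1+\mu_j)^{s/2}\|g_j\|_{L^2(M)}<\infty\Bigr\}.
\]

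\textbf{Norm axioms.} Homogeneity and the triangle inequality for \(\|\cdot\|_{\Vspec_s(M)}\) follow blockwise from linearity of \(P_j\) and the triangle inequality in \(L^2\). Definiteness follows from \(f=\sum_jP_jf\) in \(L^2(M)\).

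\textbf{The \(L^2\) bound.} Since \(\mu_j\geq0\), we have \((1+\mu_j)^{s/2}\geq1\). Hence
\[
        \|f\|_{L^2(M)}=\Bigl(\sum_j\|P_jf\|^2\Bigr)^{1/2}\leq\sum_j\|P_jf\|\leq\|f\|_{\Vspec_s(M)},
\]
using \(\ell^2\leq\ell^1\). The same monotonicity of weights, \((1+\mu_j)^{s/2}\leq(1+\mu_j)^{t/2}\) for \(t\geq s\), gives \(\|f\|_{\Vspec_s}\leq\|f\|_{\Vspec_t}\). This yields the continuous inclusion \(\Vspec_t\hookrightarrow\Vspec_s\).

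\textbf{Completeness.} This is the only step needing care. Take a Cauchy sequence \((f_k)\) in \(\Vspec_s\). By the \(L^2\) bound it is Cauchy in \(L^2\), so \(f_k\to f\) in \(L^2\). Each \(P_j\) is \(L^2\)-continuous, so \(P_jf_k\to P_jf\) in \(E_j\). Fix \(\varepsilon>0\) and choose \(K\) with \(\|f_k-f_l\|_{\Vspec_s}<\varepsilon\) for \(k,l\geq K\). For any finite \(J\),
\[
        \sum_{j\leq J}(1+\mu_j)^{s/2}\|P_j(f_k-f_l)\|<\varepsilon .
\]
Let \(l\to\infty\), then \(J\to\infty\) (Fatou for sums). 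This gives \(\|f_k-f\|_{\Vspec_s}\leq\varepsilon\) for \(k\geq K\). In particular \(f\in\Vspec_s\) and \(f_k\to f\) in \(\Vspec_s\).

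\textbf{Density of finite spectral sums.} For \(f\in\Vspec_s\), use the truncations \(f_J=\sum_{j\leq J}P_jf\). Then
\[
        \|f-f_J\|_{\Vspec_s}=\sum_{j>J}(1+\mu_j)^{s/2}\|P_jf\|,
\]
which is the tail of a convergent series and hence tends to zero.

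\textbf{Isometry invariance.} For an isometry \(T\), the pullback \(U_Tf=f\circ T\) is unitary on \(L^2(M,\nu)\), since \(T\) preserves \(\nu\). It also commutes with \(\Delta_M\), because the Laplace–Beltrami operator is natural under isometries. Hence \(U_T\) maps each eigenspace \(E_j=\ker(L-\mu_j)\) onto itself. The same holds for \(U_T^{-1}=U_{T^{-1}}\), so \(U_T(E_j)=E_j\) and therefore \(U_T\) commutes with \(P_j\). Then
\[
        \|P_j(f\circ T)\|=\|U_TP_jf\|=\|P_jf\|,
\]
and summing with the weights gives the claimed equality.

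The main technical point is justifying that \(U_T\) commutes with \(L\) on the full \(L^2\) domain. I would handle this by checking the identity on smooth functions, via the invariance of the Dirichlet form \(\int|\nabla f|^2\dd\nu\) under isometries. I would then pass to eigenspaces, which consist of smooth functions, so no domain issues actually arise.
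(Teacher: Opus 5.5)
Your proof is correct and follows the paper's route. The paper identifies $f$ isometrically with the weighted $\ell^1$-sum of the eigenspaces $E_j$. It obtains the $L^2$ bound and the monotonicity in $s$ from $\ell^2\subset\ell^1$ together with weights $(1+\mu_j)^{s/2}\geq 1$. It gets isometry invariance because the unitary pullback commutes with $L$, and hence with every $P_j$. The only difference is that you verify completeness directly, using the $L^2$ limit and a Fatou-type passage over finite partial sums, rather than transferring it from $\ell^1$. This has the small advantage of not needing the surjectivity of the identification, which the paper leaves implicit.
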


The proof is given in Appendix~\ref{app:structural-proofs}.

\begin{proposition}[Intrinsic block compressibility]
\label{prop:block-compressibility}
Let \(s\geq0\) and \(f\in\Vspec_s(M)\). For \(K\geq1\), define
\[
        \sigma_K^{\mathrm{block}}(f;H^s)
        :=
        \inf_{\substack{S\subset\mathbb N_0\\ |S|\leq K}}
        \left\|
        f-\sum_{j\in S}P_jf
        \right\|_{H^s(M)}.
\]
Then
\[
        \sigma_K^{\mathrm{block}}(f;H^s)
        \leq
        \frac{\|f\|_{\Vspec_s(M)}}{\sqrt{K+1}}.
\]
In particular,
\[
        \inf_{\substack{S\subset\mathbb N_0\\ |S|\leq K}}
        \left\|
        f-\sum_{j\in S}P_jf
        \right\|_{L^2(M)}
        \leq
        \frac{\|f\|_{\Vspec_s(M)}}{\sqrt{K+1}}.
\]
\end{proposition}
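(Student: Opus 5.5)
The plan is to reduce to the standard Stechkin-type bound for best $K$-term approximation in $\ell^2$ of an $\ell^1$ sequence, applied to the block sequence
\[
a_j:=(1+\mu_j)^{s/2}\|P_jf\|_{L^2(M)}.
\]
By definition, $\|f\|_{\Vspec_s(M)}=\sum_j a_j=:A<\infty$. By the orthogonality of the $E_j$ and the spectral definition of the Sobolev norm, for any finite $S$
\[
\Bigl\|f-\sum_{j\in S}P_jf\Bigr\|_{H^s(M)}^2=\sum_{j\notin S}a_j^2 .
\]
The $H^s$ tail is therefore exactly the $\ell^2$ tail of $(a_j)$ off $S$, and the problem becomes purely combinatorial.

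Since $a\in\ell^1$, we have $a_j\to0$, so a nonincreasing rearrangement $a_{(1)}\geq a_{(2)}\geq\cdots$ of the sequence exists. Choose $S$ to be the indices of the $K$ largest entries. For each $k>K$, every one of $a_{(1)},\dots,a_{(K+1)}$ is $\geq a_{(k)}$. I will use the sharp form of Stechkin. Let $t:=a_{(K+1)}$ be the largest omitted value. Then
\[
\sum_{k>K}a_{(k)}^2\leq t\sum_{k>K}a_{(k)}.
\]
Also $(K+1)t\leq \sum_{k\leq K+1}a_{(k)}$. Writing $B:=\sum_{k>K}a_{(k)}$ and $T:=\sum_{k\leq K}a_{(k)}=A-B$, we get $(K+1)t\leq T+t$, hence $Kt\leq A-B$. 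Combining,
\[
\sum_{k>K}a_{(k)}^2\leq tB\leq \frac{(A-B)B}{K}\leq\frac{A^2}{4K}.
\]
This gives $\sigma_K\leq A/(2\sqrt K)$, which is at most $A/\sqrt{K+1}$ because $4K\geq K+1$.

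The main obstacle is getting the constant $\sqrt{K+1}$ rather than the crude $\sqrt K$. The balancing inequality above handles it, since $4K\geq K+1$ for $K\geq1$. An alternative I would keep as a fallback is the direct bound $\sum_{k>K}a_{(k)}^2\leq a_{(K+1)}B$ with $a_{(K+1)}\leq A/(K+1)$ and $B\leq A$, which gives exactly $A/\sqrt{K+1}$. That route is simpler, and I would present it as the main argument.

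Finally, the $L^2$ statement follows from $\|g\|_{L^2}\leq\|g\|_{H^s}$ for $s\geq0$, since $(1+\mu_j)^s\geq1$. One minor point is ties and infinitely many zero entries in the rearrangement. These are harmless: if fewer than $K$ entries are nonzero, taking $S$ to be the support gives a zero error.
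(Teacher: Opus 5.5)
Your proposal is correct and follows the paper's proof. The paper likewise sets $a_j=(1+\mu_j)^{s/2}\|P_jf\|_{L^2(M)}$, keeps the $K$ largest, identifies the omitted $\ell^2$ tail with the $H^s$ error by orthogonality, and passes to $L^2$ via $\|g\|_{L^2}\le\|g\|_{H^s}$; the only difference is that the paper cites Stechkin's inequality from DeVore, whereas you prove it inline (your fallback gives exactly $A/\sqrt{K+1}$, and your balancing argument even gives the sharper $A/(2\sqrt{K})$).
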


This is the classical Stechkin best-\(K\)-term estimate applied to the
weighted block magnitudes
\((1+\mu_j)^{s/2}\|P_jf\|_2\); see
\cite{devore1998nonlinear}.  The short specialization is recorded in
Appendix~\ref{proof_prop:block-compressibility}.  It is nonlinear because the selected set depends
on \(f\), and its deterministic complexity is the number of retained
eigenspaces rather than their aggregate dimension.  The block dimensions
reappear in the statistical estimation cost.

The next result gives the principal structural interpretation of the norm: it is both an atomic variation norm and the smallest coefficientwise spectral $\ell^1$-norm obtainable by rotating bases within eigenspaces.
\begin{proposition}[Atomic and basis-envelope characterizations]
\label{prop:atomic-basis-envelope}
Let
\[
        \Acal_s
        :=
        \bigcup_{j\geq0}
        \left\{
        (1+\mu_j)^{-s/2}u:
        u\in E_j,\ \|u\|_{L^2(M)}=1
        \right\}.
\]
Then the closed unit ball of \(\Vspec_s(M)\) is
\[
        \left\{f:\|f\|_{\Vspec_s(M)}\leq1\right\}
        =
        \overline{\operatorname{aconv}(\Acal_s)}^{\,L^2(M)},
\]
where \(\operatorname{aconv}\) denotes the absolutely convex hull.
Let \(\mathfrak E(L)\) denote the collection of eigenbases
\(  \Bcal=\{\phi_{j,\ell}:j\geq0,\ 1\leq\ell\leq d_j\} \)
obtained by choosing an orthonormal basis separately in each eigenspace, and
define
\[
        \|f\|_{\mathrm{SB},s;\Bcal}
        :=
        \sum_{j\geq0}(1+\mu_j)^{s/2}
        \sum_{\ell=1}^{d_j}
        |\langle f,\phi_{j,\ell}\rangle_{L^2(M)}|.
\]
Then, with extended values on \(L^2(M)\),
\begin{equation}\label{eq:basis-envelope-characterization}
        \|f\|_{\Vspec_s(M)}
        =
        \min_{\Bcal\in\mathfrak E(L)}
        \|f\|_{\mathrm{SB},s;\Bcal}.
\end{equation}
\end{proposition}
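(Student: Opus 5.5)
We prove the two characterizations separately. Throughout write \(w_j:=(1+\mu_j)^{s/2}\geq1\) and \(B:=\{f:\|f\|_{\Vspec_s(M)}\leq1\}\).

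\emph{Atomic characterization.} First we show \(\overline{\operatorname{aconv}(\Acal_s)}\subset B\). Each atom \(a=w_j^{-1}u\) with \(u\in E_j\) and \(\|u\|_{L^2(M)}=1\) satisfies \(\|a\|_{\Vspec_s(M)}=1\). Since \(\|\cdot\|_{\Vspec_s(M)}\) is a norm (Proposition~\ref{prop:structural}), every finite absolutely convex combination lies in \(B\). It remains to show that \(B\) is closed in \(L^2(M)\). If \(f_k\to f\) in \(L^2(M)\), then \(\|P_jf_k\|_{L^2(M)}\to\|P_jf\|_{L^2(M)}\) for each \(j\). Fatou's lemma applied to the series over \(j\) then gives \(\|f\|_{\Vspec_s(M)}\leq\liminf_k\|f_k\|_{\Vspec_s(M)}\leq1\).

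For the reverse inclusion, take \(f\in B\) and write \(c_j:=w_j\|P_jf\|_{L^2(M)}\), so that \(\sum_j c_j\leq1\). Whenever \(P_jf\neq0\), set \(u_j:=P_jf/\|P_jf\|_{L^2(M)}\); then \(P_jf=c_j\,(w_j^{-1}u_j)\). Hence each partial sum \(\sum_{j\leq N}P_jf=\sum_{j\leq N}c_j\,(w_j^{-1}u_j)\) is an absolutely convex combination of atoms, because \(\sum_{j\leq N}c_j\leq1\). These partial sums converge to \(f\) in \(L^2(M)\), so \(f\in\overline{\operatorname{aconv}(\Acal_s)}\).

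\emph{Basis envelope.} We first prove the lower bound. For any \(\Bcal\in\mathfrak E(L)\) and any \(j\), the \(\ell^2\leq\ell^1\) inequality gives
\[
\|P_jf\|_{L^2(M)}=\Bigl(\sum_{\ell=1}^{d_j}|\langle f,\phi_{j,\ell}\rangle_{L^2(M)}|^2\Bigr)^{1/2}\leq\sum_{\ell=1}^{d_j}|\langle f,\phi_{j,\ell}\rangle_{L^2(M)}|.
\]
Multiplying by \(w_j\) and summing over \(j\) yields \(\|f\|_{\Vspec_s(M)}\leq\|f\|_{\mathrm{SB},s;\Bcal}\) for every \(\Bcal\). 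This holds in extended values as well.

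To show the minimum is attained, we build an adapted basis eigenspace by eigenspace. In each \(E_j\) with \(P_jf\neq0\), complete \(\phi_{j,1}:=P_jf/\|P_jf\|_{L^2(M)}\) to an orthonormal basis of \(E_j\). In each \(E_j\) with \(P_jf=0\), take an arbitrary orthonormal basis. Since \(f-P_jf\perp E_j\), we have \(\langle f,\phi_{j,1}\rangle_{L^2(M)}=\|P_jf\|_{L^2(M)}\). Every other basis vector \(\phi_{j,\ell}\) with \(\ell\geq2\) is orthogonal to \(P_jf\), so \(\langle f,\phi_{j,\ell}\rangle_{L^2(M)}=0\). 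Therefore the inner sum for block \(j\) equals exactly \(\|P_jf\|_{L^2(M)}\), and \(\|f\|_{\mathrm{SB},s;\Bcal}=\|f\|_{\Vspec_s(M)}\). This also covers the case where both sides are \(+\infty\), so the minimum is attained in extended values.

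The one place needing care is the closedness of \(B\) in \(L^2(M)\), which the atomic equality depends on. The convex-hull inclusions are otherwise immediate, and the Fatou argument handles closedness.
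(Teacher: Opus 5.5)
Your proposal is correct and follows essentially the same route as the paper. You use the same explicit atomic decomposition via partial sums, the triangle inequality plus lower semicontinuity of \(\|\cdot\|_{\Vspec_s(M)}\) under \(L^2\)-convergence (your Fatou step is the paper's supremum-of-continuous-partial-sums argument in different words), the blockwise \(\ell^2\leq\ell^1\) bound, and the basis aligned with \(P_jf/\|P_jf\|_{L^2(M)}\) for attainment.
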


The proof is given in Appendix~\ref{app:structural-proofs}. The first identity
justifies the word ``variation'': the atoms are weighted unit directions
supported in single eigenspaces. It also makes the statistical limitation
transparent, because the atom family at level \(j\) is the unit sphere of a
\(d_j\)-dimensional Hilbert space. The minimum in
\eqref{eq:basis-envelope-characterization} is attained by aligning one basis
vector in each nonzero block with \(P_jf\). This minimizing eigenbasis depends
on \(f\); the identity does not assert that one common eigenbasis makes the
entire class coefficientwise sparse. When the spectrum is simple, the norm
reduces to a weighted coefficientwise spectral \(\ell^1\)-norm. With
multiplicities, it is the basis-independent grouped envelope of those norms.

\FloatBarrier
\subsection{Concrete forms of the intrinsic spectral blocks}\label{sec:examples}

For each \(\lambda\in\operatorname{Spec}(L)\), let \(P_\lambda\) denote the
orthogonal projector onto the full \(\lambda\)-eigenspace and set
\[
        b_\lambda(f):=\|P_\lambda f\|_{L^2(M)}.
\]
Then
\[
        \|f\|_{\Vspec_s(M)}
        =
        \sum_{\lambda\in\operatorname{Spec}(L)}
        (1+\lambda)^{s/2}b_\lambda(f).
\]
Table~\ref{tab:concrete-blocks} identifies these block energies in the
principal examples.  The spectral formulas for spheres and compact Lie groups
are standard; see \cite{chavel1984eigenvalues,helgason2000groups}.  In the torus
row we use \(\mathbb T^d=(\mathbb R/2\pi\mathbb Z)^d\) with normalized Haar
measure, so the eigenvalues are \(|k|^2\).

\begin{table}[!htbp]
\centering
\small
\begin{tabularx}{\textwidth}{@{}l l Y@{}}
\toprule
Space & Laplace block & Block energy \(b_\lambda(f)\) \\
\midrule

\(\mathbb T^d\)
&
\(r\in\mathcal R_d:=\{|k|^2:k\in\mathbb Z^d\}\)
&
\(\displaystyle
b_r(f)=
\left(\sum_{|k|^2=r}|\widehat f_k|^2\right)^{1/2}\).
For \(d=1\) and \(r=n^2\), this groups the modes \(n\) and \(-n\).
\\[1.2ex]

\(\mathbb S^m\)
&
\(\mu_\ell=\ell(\ell+m-1)\)
&
\(\displaystyle
b_{\mu_\ell}(f)=
\left(\sum_{q=1}^{d_\ell}|\widehat f_{\ell,q}|^2\right)^{1/2}\).
\\[1.2ex]

Compact Lie group \(G\)
&
\(\lambda\) a Casimir eigenvalue
&
\(\displaystyle
b_\lambda(f)=
\left(
\sum_{\pi:\kappa_\pi=\lambda}
 d_\pi\|\widehat f(\pi)\|_{\mathrm{HS}}^2
\right)^{1/2}\).
For \(SO(3)\),
\(b_{\ell(\ell+1)}(f)=
\sqrt{2\ell+1}\|\widehat f(\ell)\|_{\mathrm{HS}}\).
\\[1.2ex]

\(M_1\times M_2\)
&
\(\lambda=\mu_a^{(1)}+\mu_b^{(2)}\)
&
\(\displaystyle
b_\lambda(f)=
\left\|
\sum_{\mu_a^{(1)}+\mu_b^{(2)}=\lambda}
(P_a^{(1)}\otimes P_b^{(2)})f
\right\|_{L^2(M_1\times M_2)}\).
\\

\bottomrule
\end{tabularx}
\caption{Intrinsic spectral blocks in standard geometries.}
\label{tab:concrete-blocks}
\end{table}

Thus the intrinsic blocks are, respectively, lattice shells,
spherical-harmonic degrees, Casimir eigenspaces, and collision classes of
product eigenvalues. In each case, coordinates within a block are aggregated
in \(\ell^2\), making the norm independent of a change of orthonormal basis.
The spectral growth calculations are given in Appendix~\ref{sec:specializations}, and the resulting matching minimax rates are collected in Corollary~\ref{cor:minimax-standard-homogeneous}.
\FloatBarrier

\subsection{Comparison with Sobolev and Besov spaces}

The spectral block-variation condition is stronger than Sobolev regularity of
the same order. Indeed,
\[
        \|f\|_{H^s(M)}
        =
        \left(
        \sum_{j\geq0}
        \left[(1+\mu_j)^{s/2}\|P_jf\|_{L^2(M)}\right]^2
        \right)^{1/2}
        \leq
        \|f\|_{\Vspec_s(M)}.
\]
More precisely, the block \(\ell^1\) geometry implies a Besov embedding.

\begin{proposition}[Sobolev--Besov comparison]\label{prop:besov-embedding}
Let \(s\geq0\). With the standard compact-manifold Littlewood--Paley
characterization of Besov spaces
\cite{geller_mayeli_2009_besov},
\[
        \Vspec_s(M)
        \hookrightarrow
        B^s_{2,1}(M)
        \hookrightarrow
        B^s_{2,2}(M)=H^s(M)
\]
continuously.
\end{proposition}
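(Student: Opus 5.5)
The proof reduces to comparing the dyadic Littlewood--Paley pieces with the exact eigenspace projections. Following \cite{geller_mayeli_2009_besov}, fix a smooth Littlewood--Paley partition $\{\varphi_k\}_{k\geq0}$ with $\operatorname{supp}\varphi_0\subset[0,2]$ and $\operatorname{supp}\varphi_k\subset[2^{k-1},2^{k+1}]$ for $k\geq1$, $0\leq\varphi_k\leq1$, and $\sum_k\varphi_k\equiv1$ on $[0,\infty)$. The Besov norm is then
\[
\|f\|_{B^s_{2,q}}\asymp\Bigl(\sum_{k\geq0}\bigl(2^{ks}\|\varphi_k(\sqrt{L})f\|_{L^2}\bigr)^q\Bigr)^{1/q}.
\]
Since $\varphi_k(\sqrt L)$ is a spectral multiplier, it acts on each eigenspace by a scalar:
\[
\varphi_k(\sqrt L)f=\sum_j\varphi_k(\sqrt{\mu_j})P_jf .
\]

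For the first embedding, apply the triangle inequality in $L^2$:
\[
\|\varphi_k(\sqrt L)f\|_{L^2}\leq\sum_{j:\sqrt{\mu_j}\in\operatorname{supp}\varphi_k}\|P_jf\|_{L^2}.
\]
On the support of $\varphi_k$ we have $2^{k}\leq 2\sqrt{\mu_j}$ for $k\geq1$, so $2^{ks}\leq 2^s(1+\mu_j)^{s/2}$; for $k=0$ the bound $2^{0}=1\leq(1+\mu_j)^{s/2}$ is trivial. Hence
\[
2^{ks}\|\varphi_k(\sqrt L)f\|_{L^2}\leq 2^s\sum_{j:\varphi_k(\sqrt{\mu_j})\neq0}(1+\mu_j)^{s/2}\|P_jf\|_{L^2}.
\]
Sum over $k$ and exchange the order of summation. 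Each index $j$ lies in the support of at most two (or a fixed number of) $\varphi_k$, so
\[
\|f\|_{B^s_{2,1}}\lesssim 2^{s+1}\|f\|_{\Vspec_s(M)} .
\]
One could argue with $\ell^2$-Cauchy--Schwarz within each band, but that would bring in the number of blocks per band, which we want to avoid. The plain triangle inequality already gives an $\ell^1$ bound, which is exactly what is needed.

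The second embedding $B^s_{2,1}\hookrightarrow B^s_{2,2}$ is the inclusion $\ell^1\subset\ell^2$ applied to the sequence $(2^{ks}\|\varphi_k(\sqrt L)f\|_{L^2})_{k\geq0}$. For the identification $B^s_{2,2}=H^s$, expand by orthogonality:
\[
\|\varphi_k(\sqrt L)f\|_{L^2}^2=\sum_j\varphi_k(\sqrt{\mu_j})^2\|P_jf\|_{L^2}^2 .
\]
Multiply by $2^{2ks}$ and sum over $k$. Because $2^{2ks}\asymp(1+\mu_j)^s$ on the support of $\varphi_k$, this gives
\[
\sum_k 2^{2ks}\|\varphi_k(\sqrt L)f\|_{L^2}^2\asymp\sum_j(1+\mu_j)^s\Bigl(\sum_k\varphi_k(\sqrt{\mu_j})^2\Bigr)\|P_jf\|_{L^2}^2 .
\]
Finally, $\sum_k\varphi_k^2$ is bounded above by $1$ (since $0\leq\varphi_k\leq1$ and $\sum_k\varphi_k=1$) and bounded below by $1/2$ (by Cauchy--Schwarz, as at most two $\varphi_k$ are nonzero at any point). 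The right-hand side is therefore comparable to $\|f\|_{H^s}^2$.

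The main obstacle is bookkeeping rather than substance. One has to make sure the spectral scaling matches the convention in \cite{geller_mayeli_2009_besov}, that is, $\sqrt{\mu_j}$ versus $\mu_j$ and dyadic bands $2^k$ versus $4^k$; otherwise the weight $(1+\mu_j)^{s/2}$ is not comparable to $2^{ks}$. This only changes constants, but it must be checked. One should also handle the low band $k=0$, which contains $\mu_0=0$, separately, as done above.
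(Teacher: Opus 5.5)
Your proposal is correct and follows the paper's proof: you bound each dyadic piece by the triangle inequality over eigenspaces, exchange the sums (Tonelli), use finite overlap of the dyadic supports together with $2^{ks}\lesssim(1+\mu_j)^{s/2}$ on each support, and then use $\ell^1\subset\ell^2$ for the second inclusion. The differences are cosmetic: the paper uses the multiplier $\psi_n((I+L)^{1/2})$, which avoids your separate treatment of the band containing $\mu_0=0$, and it cites $B^s_{2,2}=H^s$ as standard rather than checking it by orthogonality as you do.
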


The proof is given in Appendix~\ref{app:structural-proofs}. The interpretation is
simple: \(\Vspec_s(M)\) imposes an \(\ell^1\) penalty across individual
eigenspaces, \(B^s_{2,1}(M)\) imposes an \(\ell^1\) penalty across dyadic
spectral bands, and \(H^s(M)\) imposes an \(\ell^2\) energy condition. When
\(s>m/2\), Sobolev embedding also gives
\begin{equation}\label{eq:variation-to-continuous}
        \|f\|_{L^\infty(M)}
        \leq C_{M,g,s}\|f\|_{H^s(M)}
        \leq C_{M,g,s}\|f\|_{\Vspec_s(M)}.
\end{equation}
Thus, in this smooth regime, an \(L^\infty\) envelope follows from the
variation radius on a fixed manifold. We retain the explicit envelope \(b\)
in the statistical classes so that the theory also covers \(s\leq m/2\) and
so that concentration constants remain visible.

The block-variation norm refines the usual spectral Besov scale by
summing the energies of individual eigenspaces before aggregation into
dyadic bands. Consequently, the embedding in
Proposition~\ref{prop:besov-embedding} can be strict. A spectral-shell
criterion and explicit constructions in the round spheres and standard flat
tori are given in Appendix~\ref{app:structural-proofs}.

\subsection{Spectral truncation}

\begin{proposition}[Spectral truncation]\label{prop:truncation}
Let \(s>0\). For every \(f\in\Vspec_s(M)\) and every \(\Lambda\geq0\),
\[
        \|f-\Pi_{\leq\Lambda}f\|_{L^2(M)}
        \leq
        \|f\|_{\Vspec_s(M)}(1+\Lambda)^{-s/2}.
\]
Consequently, for every \(f\in\Vspec_s(R)\),
\[
        \|f-\Pi_{\leq\Lambda}f\|_{L^2(M)}^2
        \leq
        R^2(1+\Lambda)^{-s}.
\]
For \(0<\eps<R\), choosing
\(\Lambda_\eps=(R/\eps)^{2/s}-1\) gives error at most \(\eps\) using at most
\[
        D(\Lambda_\eps)
        \leq
        C_W(R/\eps)^{m/s}
\]
Laplace--Beltrami degrees of freedom.
\end{proposition}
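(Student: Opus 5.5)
The plan is to work directly from the orthogonal decomposition $L^2(M)=\bigoplus_j E_j$ and the definition of $\|\cdot\|_{\Vspec_s(M)}$; no structural result beyond Parseval is needed. First, write the tail as $f-\Pi_{\leq\Lambda}f=\sum_{\mu_j>\Lambda}P_jf$ in $L^2(M)$. By orthogonality of the blocks,
\[
        \|f-\Pi_{\leq\Lambda}f\|_{L^2(M)}^2=\sum_{\mu_j>\Lambda}\|P_jf\|_{L^2(M)}^2 .
\]

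The key step is to compare this $\ell^2$ tail with the weighted $\ell^1$ sum. For $\mu_j>\Lambda$ we have $(1+\mu_j)^{-s/2}\leq(1+\Lambda)^{-s/2}$, since $s>0$. Writing $a_j=(1+\mu_j)^{s/2}\|P_jf\|_{L^2(M)}$, we get $\|P_jf\|_{L^2(M)}\leq(1+\Lambda)^{-s/2}a_j$ on the tail. Hence
\[
        \Bigl(\sum_{\mu_j>\Lambda}\|P_jf\|_{L^2(M)}^2\Bigr)^{1/2}
        \leq(1+\Lambda)^{-s/2}\Bigl(\sum_{\mu_j>\Lambda}a_j^2\Bigr)^{1/2}
        \leq(1+\Lambda)^{-s/2}\sum_{\mu_j>\Lambda}a_j .
\]
The last step uses $\|\cdot\|_{\ell^2}\leq\|\cdot\|_{\ell^1}$, and the final sum is at most $\|f\|_{\Vspec_s(M)}$. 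This gives the first inequality. Squaring it and using $\|f\|_{\Vspec_s(M)}\leq R$ gives the bound for $f\in\Vspec_s(R)$.

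For the degrees-of-freedom statement, I would set $\Lambda_\eps=(R/\eps)^{2/s}-1$. This is nonnegative because $\eps<R$, so $(1+\Lambda_\eps)^{-s/2}=\eps/R$ and the error is at most $\eps$. The retained subspace is $E_{\leq\Lambda_\eps}$, whose dimension is $D(\Lambda_\eps)$. Weyl's bound \eqref{eq:weyl} gives
\[
        D(\Lambda_\eps)\leq C_W(1+\Lambda_\eps)^{m/2}=C_W(R/\eps)^{m/s}.
\]

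None of these steps is a genuine obstacle. The only points needing care are confirming that $\Lambda_\eps\geq0$, so that \eqref{eq:weyl} applies, and noting that the assumption $s>0$ is what makes the monotonicity of the weights yield decay.
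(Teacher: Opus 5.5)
Your proposal is correct and follows the paper's proof: the paper bounds the tail directly by $\sum_{\mu_j>\Lambda}\|P_jf\|_{L^2(M)}$ using the triangle inequality, where you reach the same $\ell^1$ bound through Parseval and $\|\cdot\|_{\ell^2}\leq\|\cdot\|_{\ell^1}$. It then factors out $(1+\Lambda)^{-s/2}$ and obtains the degrees-of-freedom count from $\Lambda_\eps$ and Weyl's bound \eqref{eq:weyl}, exactly as you do.
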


Proposition~\ref{prop:block-compressibility} and
Proposition~\ref{prop:truncation} describe two different approximation
geometries. The former is nonlinear and measures complexity by the number of
retained eigenspaces, without charging their dimensions. The latter is linear
and measures complexity by the total number \(D(\Lambda)\) of spectral degrees
of freedom below a cutoff. Statistical estimation combines the two: sparsity
can reduce the number of active blocks, but estimating an active block costs
its dimension.

\section{Block-shrinkage density estimation: oracle inequality and rates}
\label{sec:block-shrinkage}

\subsection{Coordinate-free estimator, equivariance, and oracle inequality}

Let $K_j$ be the integral kernel of the orthogonal projector $P_j$. Thus, for
any orthonormal basis $\{\phi_{j,\ell}\}_{\ell=1}^{d_j}$ of $E_j$,
\[
        K_j(x,y)
        =
        \sum_{\ell=1}^{d_j}\phi_{j,\ell}(x)\phi_{j,\ell}(y),
        \qquad
        P_jf(x)=\int_M K_j(x,y)f(y)\,d\nu(y).
\]
The kernel is independent of the chosen basis. Given independent observations
$X_1,\ldots,X_n\sim p_0\,d\nu$, define the empirical element of $E_j$
\begin{equation}
\label{eq:empirical-intrinsic-block}
        \widehat g_j
        :=
        \frac1n\sum_{i=1}^n K_j(\cdot,X_i)
        \in E_j,
        \qquad
        g_j:=\E_{p_0}\widehat g_j=P_jp_0.
\end{equation}
For $\tau>0$, define radial soft thresholding on the Hilbert space $E_j$ by
\[
        \mathsf S_\tau(g)
        :=
        \left(1-\frac{\tau}{\norm{g}_{L^2(M)}}\right)_+g,
        \qquad g\in E_j,
\]
with $\mathsf S_\tau(0)=0$. Let $\cJ\subset\{1,2,\ldots\}$ be finite. For
thresholds $\{\tau_j:j\in\cJ\}$, set
\begin{equation}
\label{eq:intrinsic-block-estimator}
        \widetilde p_{\cJ}
        :=
        1+\sum_{j\in\cJ}\mathsf S_{\tau_j}(\widehat g_j).
\end{equation}
The zero-frequency component is known exactly because $P_0p_0=1$.

Formula \eqref{eq:intrinsic-block-estimator} is coordinate free. Indeed, in an
arbitrary orthonormal basis of $E_j$,
\[
        \widehat g_j
        =
        \sum_{\ell=1}^{d_j}\widehat\theta_{j,\ell}\phi_{j,\ell},
        \qquad
        \widehat\theta_{j,\ell}
        =
        \frac1n\sum_{i=1}^n\phi_{j,\ell}(X_i),
\]
and radial soft thresholding becomes the usual Euclidean group soft-thresholding
map
\[
        S_\tau(y)
        =
        \left(1-\frac{\tau}{\norm y_2}\right)_+y.
\]
Thus coordinates may be used for computation, but neither the estimator nor
its block norms depend on the basis chosen inside an eigenspace.
On standard homogeneous manifolds the kernels and block coordinates are explicit; on a general manifold, implementation requires numerical approximation of the relevant Laplace eigenspaces and spectral projectors.

The estimator is also equivariant under isometries. If $T$ is an isometry,
write $T\mathbf X=(TX_i)_{i=1}^n$. Invariance of the projection kernels gives
\[
        \widehat g_j^{\,T\mathbf X}
        =
        \widehat g_j^{\,\mathbf X}\circ T^{-1},
        \qquad
        \widetilde p_{\cJ}^{\,T\mathbf X}
        =
        \widetilde p_{\cJ}^{\,\mathbf X}\circ T^{-1}.
\]
Hence the procedure respects the same isometry action as the regularity class.

The finite spectral sum in \eqref{eq:intrinsic-block-estimator} has unit
integral but need not be nonnegative. Let
\[
        \cD
        =
        \left\{
        q\in L^2(M):q\geq0\ \text{a.e.},\ \int_Mq\,d\nu=1
        \right\}
\]
and define
\[
        \widetilde p_{\cJ}^+
        :=
        \Pi_{\cD}\widetilde p_{\cJ}.
\]
Since $\cD$ is closed and convex and $p_0\in\cD$,
\[
        \norm{\widetilde p_{\cJ}^+-p_0}_{L^2(M)}
        \leq
        \norm{\widetilde p_{\cJ}-p_0}_{L^2(M)}.
\]
The projection has the familiar threshold-and-shift form
\[
        \Pi_{\cD}f=(f-\lambda_f)_+,
        \qquad
        \int_M(f-\lambda_f)_+\,d\nu=1.
\]
It is therefore a well-defined theoretical post-processing step, although it
is generally no longer a finite spectral sum.

We first state the probabilistic input abstractly.

\begin{assumption}[Block deviation]
\label{ass:block-deviation}
For every $j\in\cJ$, let
\[
        \varepsilon_j=\widehat g_j-g_j\in E_j.
\]
There are deterministic thresholds $\tau_j>0$ and remainders $\eta_j\geq0$
such that
\[
        \E_{p_0}\!\left[
        \norm{\varepsilon_j}_{L^2(M)}^2
        \bbone\!\left\{\norm{\varepsilon_j}_{L^2(M)}>\tau_j/2\right\}
        \right]
        +
        \tau_j^2\PP_{p_0}\!\left(
        \norm{\varepsilon_j}_{L^2(M)}>\tau_j/2
        \right)
        \leq \eta_j.
\]
The bound is required uniformly over the density class under consideration.
\end{assumption}

\begin{theorem}[Intrinsic block-shrinkage oracle inequality]
\label{thm:block-oracle}
Let $p_0$ be a density and let $\widetilde p_{\cJ}$ be defined by
\eqref{eq:intrinsic-block-estimator}. If
Assumption~\ref{ass:block-deviation} holds for every $j\in\cJ$, then
\begin{align*}
        \E_{p_0}\norm{\widetilde p_{\cJ}-p_0}_{L^2(M)}^2
        &\leq
        9\sum_{j\in\cJ}
        \min\left\{
        \norm{P_jp_0}_{L^2(M)}^2,\tau_j^2
        \right\}\\
        &\quad+
        \sum_{j\notin\cJ,\,j\geq1}
        \norm{P_jp_0}_{L^2(M)}^2
        +2\sum_{j\in\cJ}\eta_j.
\end{align*}
The same inequality holds with $\widetilde p_{\cJ}$ replaced by
$\widetilde p_{\cJ}^+$.
\end{theorem}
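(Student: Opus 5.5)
Since the blocks $E_j$ are mutually orthogonal and $P_0\widetilde p_{\cJ}=\one=P_0p_0$, Parseval splits the risk as
\[
\norm{\widetilde p_{\cJ}-p_0}_{L^2(M)}^2=\sum_{j\in\cJ}\norm{\mathsf S_{\tau_j}(\widehat g_j)-g_j}_{L^2(M)}^2+\sum_{j\notin\cJ,\,j\geq1}\norm{P_jp_0}_{L^2(M)}^2 .
\]
The second sum is deterministic and is exactly the truncation term. It therefore suffices to prove, for each fixed $j\in\cJ$, the deterministic-plus-tail bound
\[
\E\norm{\mathsf S_{\tau}(\widehat g)-g}^2\leq 9\min\{\norm g^2,\tau^2\}+2\eta_j,
\]
with $\tau=\tau_j$, $g=g_j$, $\widehat g=g+\varepsilon$. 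The statement for $\widetilde p_{\cJ}^+$ then follows at once from the contraction property of $\Pi_{\cD}$ recorded before the theorem, because $p_0\in\cD$.

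For the single-block bound I would split on the event $G=\{\norm{\varepsilon}\leq\tau/2\}$. Two elementary facts about radial soft thresholding are used.
\begin{itemize}
\item Writing $\mathsf S_\tau(y)=y-\tau y/\norm y$ on $\{\norm y>\tau\}$ and $\mathsf S_\tau(y)=0$ otherwise, the shift is at most $\tau$: $\norm{\mathsf S_\tau(y)-y}\leq\tau$ for all $y$.
\item $\mathsf S_\tau$ is the proximal map of $\tau\norm{\cdot}$, hence $1$-Lipschitz with $\mathsf S_\tau(0)=0$, so $\norm{\mathsf S_\tau(y)}\leq\norm y$.
\end{itemize}

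On $G$ there are two cases.
\begin{itemize}
\item If $\norm g\leq\tau/2$, then $\norm{\widehat g}\leq\tau$, so the estimate is zero and the error is $\norm g^2=\min\{\norm g^2,\tau^2\}$.
\item If $\norm g>\tau/2$, the first fact gives $\norm{\mathsf S_\tau(\widehat g)-g}\leq\norm{\varepsilon}+\tau\leq 3\tau/2$. Since $\min\{\norm g^2,\tau^2\}\geq\tau^2/4$, the squared error is at most $\tfrac94\tau^2\leq 9\min\{\norm g^2,\tau^2\}$.
\end{itemize}
This is where the constant $9$ appears.

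On $G^c$ I would combine both facts:
\[
\norm{\mathsf S_\tau(\widehat g)-g}\leq\min\{\norm\varepsilon+\tau,\ \norm g+\norm\varepsilon\},
\]
where the second bound uses $\norm{\mathsf S_\tau(\widehat g)}\leq\norm{\widehat g}$. Hence
\[
\norm{\mathsf S_\tau(\widehat g)-g}^2\leq 2\norm\varepsilon^2+2\min\{\tau,\norm g\}^2 .
\]
Taking expectations over $G^c$, the term $2\E[\norm\varepsilon^2\bbone_{G^c}]$ is bounded by $2\eta_j$ through Assumption~\ref{ass:block-deviation}. The cross term $2\min\{\tau^2,\norm g^2\}\PP(G^c)$ is the delicate part.

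The obstacle is to fit this cross term into the stated constants, namely $9$ in front of the minimum and $2$ in front of $\eta_j$, without double counting. The good-event contribution is multiplied by $\PP(G)\leq1$, but the bound above uses only $\tfrac94\tau^2$ rather than the full $9\tau^2$ (or only $\norm g^2$ in the small-signal case), which leaves slack. I would exploit this by keeping the exact good-event bound $\tfrac94\tau^2\,\PP(G)$, or $\norm g^2\,\PP(G)$ when $\norm g\leq\tau/2$. I would then absorb the bad-event term into the minimum using $\PP(G)+\PP(G^c)=1$, giving at most $\max\{\tfrac94,2\}\min\{\norm g^2,\tau^2\}\cdot4\leq 9\min\{\cdot\}$ in the large-signal case. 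Alternatively, I would bound $2\min\{\tau^2,\norm g^2\}\PP(G^c)\leq 2\tau^2\PP(G^c)\leq$ (the second half of) $2\eta_j$. In the small-signal case the error is simply at most $\norm g^2$ on $G$ and at most $2\norm\varepsilon^2+2\norm g^2$ on $G^c$. Summing gives at most $3\norm g^2+2\E[\norm\varepsilon^2\bbone_{G^c}]$, which fits the stated bound. Summing the per-block bounds over $j\in\cJ$ completes the argument.
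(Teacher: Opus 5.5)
Your proposal is correct and follows the paper's proof: Parseval splitting, the deterministic factor-$9$ bound on $\{\norm{\varepsilon}\le\tau/2\}$, the bound $2\norm{\varepsilon}^2+2\tau^2$ off that event, and the contraction property of $\Pi_{\cD}$. The cross term you call delicate is not actually an obstacle, and your ``alternatively'' route is exactly the paper's, because the $\tau_j^2\PP(\norm{\varepsilon_j}>\tau_j/2)$ term in Assumption~\ref{ass:block-deviation} is there precisely to absorb $2\tau^2\PP(G^c)$. One small fix: the bound $\norm{\mathsf S_\tau(\widehat g)-g}\le\norm{g}+\norm{\varepsilon}$ does not follow from $\norm{\mathsf S_\tau(\widehat g)}\le\norm{\widehat g}$ alone; justify it instead by $1$-Lipschitzness together with $\norm{\mathsf S_\tau(g)-g}=\min\{\norm{g},\tau\}$.
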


The first term is signal dependent: a weak block is discarded at the cost of
its signal energy, while a detectable block is estimated at its stochastic
threshold. The second term is the deterministic price of omitting blocks
outside $\cJ$.

\subsection{Bernstein thresholds}

Fix an arbitrary orthonormal basis of $E_j$ and write
\[
        \Phi_j(x)
        =
        (\phi_{j,1}(x),\ldots,\phi_{j,d_j}(x))\in\R^{d_j}.
\]
Then
\[
        \norm{\Phi_j(x)}_2^2=K_j(x,x),
        \qquad
        \int_MK_j(x,x)\,d\nu(x)=d_j.
\]
Set
\[
        \kappa_j:=\sup_{x\in M}K_j(x,x).
\]

\begin{proposition}[Vector Bernstein block threshold]
\label{prop:bernstein-threshold}
Assume $\norm{p_0}_{L^\infty(M)}\leq b$. There is a universal constant $C>0$
such that, for every $j\geq1$ and every $x\geq1$,
\[
        \PP_{p_0}\left(
        \norm{\widehat g_j-g_j}_{L^2(M)}
        >
        C\left[
        \sqrt{\frac{b(d_j+x)}{n}}
        +(1+\sqrt b)\frac{\sqrt{\kappa_j}\,x}{n}
        \right]
        \right)
        \leq e^{-x}.
\]
Consequently, for any coding level $\xi_j\geq1$, Assumption~\ref{ass:block-deviation}
holds with
\begin{equation}
\label{eq:bernstein-threshold}
        \tau_j
        =
        2C\left[
        \sqrt{\frac{b(d_j+\xi_j)}{n}}
        +(1+\sqrt b)\frac{\sqrt{\kappa_j}\,\xi_j}{n}
        \right]
\end{equation}
and
\begin{equation}
\label{eq:bernstein-remainder}
        \eta_j
        \leq
        C'\tau_j^2e^{-c\xi_j}
\end{equation}
for universal constants $C',c>0$, after increasing $C$ if necessary.
\end{proposition}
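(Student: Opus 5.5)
We plan to reduce the statement to a vector Bernstein inequality in $\R^{d_j}$. Fix an orthonormal basis of $E_j$. Since $\norm{\widehat g_j-g_j}_{L^2(M)}=\norm{\frac1n\sum_{i=1}^n Z_i}_2$ with $Z_i:=\Phi_j(X_i)-\E\Phi_j(X)$ i.i.d. centered in $\R^{d_j}$, we need three ingredients. The first is an almost-sure bound: $\norm{Z_i}_2\le \norm{\Phi_j(X_i)}_2+\norm{\E\Phi_j}_2\le \sqrt{\kappa_j}+\norm{P_jp_0}_{L^2}$. Since $\norm{P_jp_0}_{L^2}\le\norm{p_0}_{L^2}\le\sqrt{b}$ and $\sqrt{\kappa_j}\ge 1$ (because $\kappa_j\ge\int K_j(x,x)\,d\nu=d_j\ge1$), this gives $\norm{Z_i}_2\le(1+\sqrt b)\sqrt{\kappa_j}$. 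The second is a variance bound: $\E\norm{Z_i}_2^2\le\E K_j(X,X)=\int K_j(x,x)p_0(x)\,d\nu\le b\,d_j$. The third is a weak (operator) variance bound: for a unit vector $v$, $\E\langle v,Z\rangle^2\le\int(\sum_\ell v_\ell\phi_{j,\ell})^2p_0\,d\nu\le b$.

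Next we apply a Talagrand/Bousquet-type concentration inequality for the supremum of the empirical process indexed by the unit ball of $\R^{d_j}$; the Pinelis inequality for Hilbert-space martingales would also do. This yields, with probability at least $1-e^{-x}$,
\[
\norm{\tfrac1n\textstyle\sum Z_i}_2\le \E\norm{\tfrac1n\textstyle\sum Z_i}_2+\sqrt{\tfrac{2\sigma^2x}{n}}+C\tfrac{Ux}{n}.
\]
Here the expectation is at most $\sqrt{b d_j/n}$ by Jensen and the second ingredient, $\sigma^2\le b$ is the weak variance, and $U=(1+\sqrt b)\sqrt{\kappa_j}$. Absorbing cross terms via Bousquet's form (the term $\sqrt{U\E\norm{\cdot}x/n}$ is bounded by an AM–GM split into the two existing terms) gives exactly the displayed tail bound. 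Using the weak variance $b$ rather than $bd_j$ in front of $x$ is what produces $\sqrt{b(d_j+x)/n}$ instead of $\sqrt{bd_jx/n}$.

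For the consequence, write $t_0(x)$ for the bracket in the tail bound, so $\tau_j=2Ct_0(\xi_j)$ and $\tau_j/2=Ct_0(\xi_j)$. Then $\tau_j^2\,\PP(\norm{\varepsilon_j}>\tau_j/2)\le\tau_j^2e^{-\xi_j}$. For the truncated second moment we use the layer-cake formula:
\[
\E[\norm{\varepsilon_j}^2\bbone\{\norm{\varepsilon_j}>\tau_j/2\}]=(\tau_j/2)^2\PP(\cdot)+\int_{\tau_j/2}^\infty 2t\,\PP(\norm{\varepsilon_j}>t)\,dt.
\]
We substitute $t=Ct_0(\xi_j+y)$ for $y\ge0$. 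Since $t_0$ is increasing and $t_0(\xi+y)\le t_0(\xi)(1+y/\xi)\le t_0(\xi)(1+y)$ (each term grows at most linearly in $x/\xi$, and $\xi\ge1$), the integral is bounded by
\[
C^2t_0(\xi_j)^2\int_0^\infty 2(1+y)\,\tfrac{d}{dy}(1+y)\,e^{-\xi_j-y}\,dy\lesssim\tau_j^2e^{-\xi_j}.
\]
This gives $\eta_j\le C'\tau_j^2e^{-c\xi_j}$, with $c=1$ in fact.

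The main obstacle is the concentration step: getting the dimension and deviation terms to separate additively, $d_j+x$ rather than $d_jx$. This requires the Talagrand–Bousquet form with weak variance, and careful handling of the cross term. If that proves awkward, a fallback is Pinelis' bound $\PP(\norm{\sum Z_i}\ge t)\le2\exp(-t^2/(2(n b d_j+tU/3)))$ combined with a separate expectation bound, at the cost of slightly worse constants. That route still gives the statement, since $\sqrt{bd_j/n}+\sqrt{bx/n}$ is needed only up to constants, and Pinelis with variance $bd_j$ yields $\sqrt{bd_jx/n}$, which is larger. So the Bousquet route is the one to commit to.
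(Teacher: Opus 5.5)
Your proposal is correct and follows essentially the paper's proof. You feed the same three bounds (strong variance $bd_j$, weak variance $b$, almost-sure bound $(1+\sqrt b)\sqrt{\kappa_j}$) into the Bousquet-type Hilbert-space Bernstein inequality of Lemma~\ref{lem:bousquet-pinelis}, and your continuous layer-cake substitution $t=Ct_0(\xi_j+y)$ is a variant of the paper's dyadic-layer tail integration (Lemma~\ref{lem:block-tail-integration}) that gives $c=1$.

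One correction to a side remark: the Pinelis fallback would \emph{not} yield the statement. The bound $\sqrt{bd_jx/n}$ is not at most a constant times $\sqrt{b(d_j+x)/n}$ (take $x=d_j$), so using the weak variance is necessary, not merely convenient.
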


\begin{definition}[Homogeneous Riemannian manifold]
A Riemannian manifold $(M,g)$ is homogeneous if a subgroup of
$\operatorname{Isom}(M,g)$ acts transitively on $M$. Equivalently, $M$ may be
identified with a homogeneous space $G/H$ for a transitive isometry group $G$
and a point stabilizer $H$.
\end{definition}

\begin{corollary}[Homogeneous projector diagonal]
\label{cor:homogeneous-threshold}
Suppose $M$ is homogeneous and $\nu$ is normalized Riemannian volume. Then
\[
        K_j(x,x)=d_j,
        \qquad
        \kappa_j=d_j.
\]
For the thresholds in \eqref{eq:bernstein-threshold},
\begin{equation}
\label{eq:homogeneous-threshold-square}
        \tau_j^2
        \leq
        Cb\left
        \{
        \frac{d_j+\xi_j}{n}
        +\frac{d_j\xi_j^2}{n^2}
        \right\},
\end{equation}
where we used $b\geq1$, which is necessary for a nonempty density class.
\end{corollary}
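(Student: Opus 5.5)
The plan has two parts: first show that the projector diagonal $K_j(x,x)$ is constant on $M$, then insert $\kappa_j=d_j$ into \eqref{eq:bernstein-threshold} and square.

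\emph{Constancy of the diagonal.} Let $T$ be an isometry of $(M,g)$. Since $T$ commutes with the Laplace--Beltrami operator and preserves $\nu$, the map $f\mapsto f\circ T$ is a unitary operator on $L^2(M)$ that maps $E_j$ onto itself. Consequently, if $\{\phi_{j,\ell}\}_{\ell=1}^{d_j}$ is an orthonormal basis of $E_j$, so is $\{\phi_{j,\ell}\circ T\}_{\ell=1}^{d_j}$. The kernel $K_j$ does not depend on the chosen basis, so
\[
K_j(Tx,Ty)=\sum_{\ell=1}^{d_j}\phi_{j,\ell}(Tx)\phi_{j,\ell}(Ty)=K_j(x,y).
\]
This is the same invariance already used above for isometry equivariance of the estimator. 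Transitivity of the isometry group now shows that $x\mapsto K_j(x,x)$ is constant, say equal to $c_j$. Integrating against $\nu$ and using $\int_M K_j(x,x)\,d\nu(x)=d_j$ together with $\nu(M)=1$ gives $c_j=d_j$. Hence $K_j(x,x)=d_j$ for every $x$, and $\kappa_j=d_j$.

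\emph{The threshold bound.} Put $\kappa_j=d_j$ into \eqref{eq:bernstein-threshold} and use $(u+v)^2\le 2u^2+2v^2$:
\[
\tau_j^2\le 8C^2\Big[\frac{b(d_j+\xi_j)}{n}+(1+\sqrt b)^2\frac{d_j\xi_j^2}{n^2}\Big].
\]
Since $b\ge1$, we have $(1+\sqrt b)^2\le 4b$. The whole right-hand side is therefore at most a constant multiple of $b\{(d_j+\xi_j)/n+d_j\xi_j^2/n^2\}$, which is \eqref{eq:homogeneous-threshold-square} after renaming the constant.

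The only step that needs care is the invariance of $K_j$. It rests on two facts: isometries preserve each eigenspace, and the normalized Riemannian volume is invariant under isometries. Both are standard, so I do not expect a genuine obstacle.
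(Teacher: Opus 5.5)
Your proof is correct and follows essentially the same route as the paper: isometry invariance of $K_j$, constancy of the diagonal by transitivity, normalization via $\int_M K_j(x,x)\,d\nu=\operatorname{tr}(P_j)=d_j$, then substitution into \eqref{eq:bernstein-threshold}. You obtain the invariance from basis independence applied to the pulled-back basis $\{\phi_{j,\ell}\circ T\}$, where the paper uses that the pullback commutes with $P_j$. Your explicit squaring step with $(1+\sqrt b)^2\le 4b$ spells out what the paper leaves as ``substitution''.
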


\medskip

The proof is given in Appendix~\ref{app:calibration-corollary-proofs}.

\begin{corollary}[Summable expected-risk calibration]
\label{cor:summable-coding}
Suppose $M$ is homogeneous. Let $\cJ_n$ be finite and choose coding levels
$\xi_j\geq1$ such that, for every $j\in\cJ_n$,
\begin{equation}
\label{eq:summable-code-local}
        \xi_j\leq C_\xi d_j,
        \qquad
        \xi_j^2\leq n.
\end{equation}
Then the Bernstein thresholds satisfy
\( \tau_j^2\leq C\frac{b}{n}d_j\),
        \(j\in\cJ_n,\)
and
\begin{equation}
\label{eq:summable-code-remainder}
        \sum_{j\in\cJ_n}\eta_j
        \leq
        C\frac{b}{n}
        \sum_{j\in\cJ_n}d_je^{-c\xi_j}.
\end{equation}
In particular, if \(
        \sup_n\sum_{j\in\cJ_n}d_je^{-c\xi_j}<\infty\),
then $\delta_n=b/n$ is an effective squared-noise level in the sense of
Definition~\ref{def:effective-noise}, and the total remainder is $O(b/n)$.
\end{corollary}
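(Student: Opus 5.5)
The argument is a direct substitution of the coding constraints \eqref{eq:summable-code-local} into the homogeneous threshold bound of Corollary~\ref{cor:homogeneous-threshold}, followed by the remainder bound \eqref{eq:bernstein-remainder} of Proposition~\ref{prop:bernstein-threshold}.

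\emph{Threshold bound.} By \eqref{eq:homogeneous-threshold-square},
\[
\tau_j^2\le Cb\Bigl\{\tfrac{d_j+\xi_j}{n}+\tfrac{d_j\xi_j^2}{n^2}\Bigr\}.
\]
The first constraint $\xi_j\le C_\xi d_j$ gives $d_j+\xi_j\le(1+C_\xi)d_j$. The second constraint $\xi_j^2\le n$ gives $d_j\xi_j^2/n^2\le d_j/n$. Together these yield $\tau_j^2\le C(2+C_\xi)\,b\,d_j/n$. We absorb $C_\xi$ into the constant $C$, which is allowed because $C$ may depend on $C_\xi$.

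\emph{Remainder bound.} Proposition~\ref{prop:bernstein-threshold} gives $\eta_j\le C'\tau_j^2e^{-c\xi_j}$ for each $j$. Inserting the threshold bound and summing over the finite set $\cJ_n$ gives \eqref{eq:summable-code-remainder} directly.

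\emph{Final claim.} Assume $\sup_n\sum_{j\in\cJ_n}d_je^{-c\xi_j}<\infty$. Then the total remainder $\sum_{j\in\cJ_n}\eta_j$ is $O(b/n)$, and the threshold bound reads $\tau_j^2\lesssim\delta_n d_j$ with $\delta_n=b/n$. Definition~\ref{def:effective-noise} has not yet been displayed. I expect it to require exactly two things: $\tau_j^2\le C\delta_n d_j$ on the retained blocks, and $\sum_j\eta_j\lesssim\delta_n$. Both have now been verified.

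\emph{Uniformity and the main point of care.} The constants must be uniform over the density class, i.e.\ depend only on $b$ through the displayed factor. This holds because the bounds in Proposition~\ref{prop:bernstein-threshold} and Corollary~\ref{cor:homogeneous-threshold} depend on $p_0$ only through $\|p_0\|_{L^\infty(M)}\le b$. The one step needing real checking is matching the final claim to the precise wording of Definition~\ref{def:effective-noise}. Everything else is routine arithmetic.
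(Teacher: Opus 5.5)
Your proposal is correct and follows the paper's proof: substitute $\xi_j\le C_\xi d_j$ and $\xi_j^2\le n$ into \eqref{eq:homogeneous-threshold-square}, then insert the resulting threshold bound into \eqref{eq:bernstein-remainder} and sum over $\cJ_n$. Your guess about Definition~\ref{def:effective-noise} asks for more than it contains: the definition only requires $\tau_j^2\le C_v\delta_n d_j$ with $C_v$ independent of $n$ and $j$, which your constant $C(2+C_\xi)$ satisfies, and the $O(b/n)$ remainder is a separate conclusion of the corollary that you have also verified.
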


\medskip

The proof is given in Appendix~\ref{app:calibration-corollary-proofs}.
\paragraph{From blockwise concentration to global risk rates}
The preceding results provide block thresholds \(\tau_j\) together
with expected-risk remainders \(\eta_j\). We summarize the stochastic
cost of estimating a block through the effective squared-noise level
\(\delta_n\), formalized in Definition~\ref{def:effective-noise} below and characterized by
\[
\tau_j^2 \lesssim \delta_n d_j
\]
over the retained eigenspaces. The next section optimizes the resulting
signal-dependent oracle bound over the block-variation ball and balances
it with the error from spectral truncation.

\subsection{Multiplicity-sensitive upper bound}

For a density $p_0\in\Pcal_s(R,b)$, the constant block consumes exactly one
unit of the variation norm. Define the nonconstant variation budget
\begin{equation}
\label{eq:nonconstant-budget}
        A_R:=R-1.
\end{equation}
Then
\begin{equation}
\label{eq:nonconstant-budget-constraint}
        \sum_{j\geq1}(1+\mu_j)^{s/2}
        \norm{P_jp_0}_{L^2(M)}
        \leq A_R.
\end{equation}
If $R=1$, the class contains only the constant density $p_0\equiv1$, and the
estimator $\widehat p\equiv1$ has zero risk. We henceforth assume $R>1$.

\begin{assumption}[Polynomial block growth]
\label{ass:poly-growth}
There are exponents $\alpha,\gamma\geq0$ and constants $C_d,C_D$ such that,
for every $j\geq1$ and every $\Lambda\geq1$,
\[
        d_j\leq C_d(1+\mu_j)^\gamma,
        \qquad
        \sum_{1\leq j:\,\mu_j\leq\Lambda}d_j
        \leq C_D(1+\Lambda)^{\alpha+\gamma}.
\]
The exponent $\gamma$ controls individual eigenspace growth, while
$\alpha+\gamma$ controls cumulative spectral dimension. In standard regularly
varying examples, $\alpha$ is the distinct-block counting exponent.
\end{assumption}

The pair $(\alpha,\gamma)$ in an upper-growth assumption need not be unique.
All results below hold for every admissible pair. The sharpest displayed bound
is obtained by minimizing over admissible pairs; in the geometric examples we
use the exact regularly varying exponents.

\begin{definition}[Effective squared-noise level]
\label{def:effective-noise}
Let $\cJ\subset\{1,2,\ldots\}$ be a finite set of retained spectral blocks,
and let $\{\tau_j:j\in\cJ\}$ be the corresponding thresholds. A number
$\delta_n>0$ is called an \emph{effective squared-noise level} on $\cJ$ if
there exists a constant $C_v>0$, independent of $n$ and $j$, such that
\begin{equation}
\label{eq:effective-block-noise}
        \tau_j^2\leq C_v\delta_nd_j,
        \qquad j\in\cJ.
\end{equation}
Thus $\delta_n$ controls the squared threshold per spectral degree of freedom,
uniformly over the retained blocks.
\end{definition}

For the Bernstein thresholds in \eqref{eq:bernstein-threshold}, one may take
\begin{equation}
\label{eq:effective-block-noise-bernstein}
        \delta_n
        =
        \max_{j\in\cJ}
        \left\{
        \frac{b(d_j+\xi_j)}{nd_j}
        +(1+\sqrt b)^2
        \frac{\kappa_j\xi_j^2}{n^2d_j}
        \right\}.
\end{equation}
\paragraph{Geometry and the size of the effective noise level}
An effective squared-noise level is not unique: any larger value also
satisfies Definition~\ref{def:effective-noise}. Statements about its
order therefore refer to the smallest scale supplied by a given
threshold calibration.

For a retained set \(\mathcal J_n\), define
\[
a_n
:=
\max_{j\in\mathcal J_n}\frac{\xi_j}{d_j},
\qquad
q_n
:=
\max_{j\in\mathcal J_n}\frac{\kappa_j}{d_j},
\qquad
\Xi_n
:=
\max_{j\in\mathcal J_n}\xi_j.
\]
Since \(b\geq1\), equation~\eqref{eq:effective-block-noise-bernstein} gives
\[
\delta_n
\lesssim
\frac{b}{n}(1+a_n)
+
\frac{bq_n\Xi_n^2}{n^2}.
\]
The quantity \(a_n\) measures whether the coding levels can be
absorbed by the block dimensions, whereas \(q_n\) measures the
concentration of the projector diagonal relative to its average
\(d_j\). Thus there is no universal calibration on an
arbitrary compact manifold.
Under a common coding level \(\xi_j=L_n\), one obtains
\[
\delta_n
\lesssim
b\left\{
\frac{L_n}{n}
+
\frac{q_nL_n^2}{n^2}
\right\}.
\]
Hence \(L_n\asymp\log n\) gives
\(\delta_n\lesssim b(\log n)/n\) provided
\(q_n\lesssim n/\log n\); if \(q_n\) grows faster, the second term may
dominate and the effective noise level can be larger. On a homogeneous
manifold \(q_n=1\), and the summable calibration of
Corollary~\ref{cor:summable-coding} attains the sharper scale
\(b/n\) whenever the coding levels are absorbed by the block
dimensions.

For $A>0$ and $\delta>0$, define
\begin{equation}
\label{eq:multiplicity-sensitive-rate}
        \rho_{\alpha,\gamma,s}(A,\delta)
        :=
        \begin{cases}
        A^{\frac{2\gamma}{s+\gamma}}
        \delta^{\frac{s}{s+\gamma}},
        &0<s<\gamma,\\[0.9em]
        A^{\frac{2(\alpha+\gamma)}{s+\gamma+2\alpha}}
        \delta^{\frac{s+\alpha}{s+\gamma+2\alpha}},
        &s\geq\gamma.
        \end{cases}
\end{equation}
When $\gamma=0$, only the second branch is used. The two formulas agree at
$s=\gamma$. The two regimes reflect different least-favorable spectral geometries. When
$0<s<\gamma$, individual eigenspace dimensions grow faster than the
regularity weights suppress high frequencies, so the worst-case oracle cost
may be driven by a single high-multiplicity block. When $s\geq\gamma$, the
regularity weights control individual blocks strongly enough that cumulative
spectral dimension, and hence the residual exponent $\alpha$, also enters the
balance.

\begin{theorem}[Multiplicity-sensitive block-shrinkage upper bound]
\label{thm:multiplicity-sensitive-upper}
Suppose
Assumption~\ref{ass:poly-growth} holds
and choose a deterministic cutoff $\Lambda_{\max,n}\geq\mu_1$. Define
\[
        \cJ_n:=\{j\geq1:\mu_j\leq\Lambda_{\max,n}\}.
\]
Assume that the following conditions hold uniformly over
$p_0\in\Pcal_s(R,b)$:
\begin{enumerate}[label=\textup{(\roman*)},leftmargin=2.2em]
\item for every $j\in\cJ_n$, Assumption~\ref{ass:block-deviation} holds
with deterministic threshold $\tau_j$ and remainder $\eta_j$;
\item the thresholds satisfy
\[
        \tau_j^2\leq C_v\delta_n d_j,
        \qquad j\in\cJ_n,
\]
for a constant $C_v$ independent of $n$ and $j$;
\item the truncation error is at most the target oracle rate:
\[
        A_R^2(1+\Lambda_{\max,n})^{-s}
        \leq
        \rho_{\alpha,\gamma,s}(A_R,\delta_n).
\]
\end{enumerate}
Then
\begin{equation}
\label{eq:main-upper-bound}
        \sup_{p_0\in\Pcal_s(R,b)}
        \E_{p_0}
        \norm{\widetilde p_{\cJ_n}^+-p_0}_{L^2(M)}^2
        \leq
        C\rho_{\alpha,\gamma,s}(A_R,\delta_n)
        +C\sum_{j\in\cJ_n}\eta_j,
\end{equation}
where $C$ depends only on
$s,\alpha,\gamma,C_d,C_D,C_v$.

Condition~\textup{(iii)} is ensured by the explicit choice
\begin{equation}
\label{eq:explicit-cutoff}
        \Lambda_{\max,n}
        \geq
        \max\left\{
        \mu_1,
        \left(
        \frac{A_R^2}
        {\rho_{\alpha,\gamma,s}(A_R,\delta_n)}
        \right)^{1/s}-1
        \right\}.
\end{equation}
\end{theorem}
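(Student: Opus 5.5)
Starting from Theorem~\ref{thm:block-oracle} applied with $\cJ=\cJ_n$ (its conclusion covers $\widetilde p^+_{\cJ_n}$), the risk is at most
\[
9\sum_{j\in\cJ_n}\min\{\|P_jp_0\|^2,\tau_j^2\}+\sum_{j\geq1,\,\mu_j>\Lambda_{\max,n}}\|P_jp_0\|^2+2\sum_{j\in\cJ_n}\eta_j .
\]
The tail term is controlled by Proposition~\ref{prop:truncation} applied to $p_0-1$, whose $\Vspec_s$ norm is at most $A_R$ by \eqref{eq:nonconstant-budget-constraint}. This gives the bound $A_R^2(1+\Lambda_{\max,n})^{-s}\le\rho_{\alpha,\gamma,s}(A_R,\delta_n)$ by (iii). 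The explicit cutoff \eqref{eq:explicit-cutoff} makes $(1+\Lambda_{\max,n})^{s}\ge A_R^2/\rho$, so (iii) follows at once. By (ii), the remaining task is purely deterministic: show that for any nonnegative $x_j=\|P_jp_0\|$ with $\sum_{j\ge1}w_jx_j\le A$, where $w_j=(1+\mu_j)^{s/2}$, one has
\[
\sum_{j\geq1}\min\{x_j^2,\delta d_j\}\lesssim\rho_{\alpha,\gamma,s}(A,\delta).
\]

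I will bound the left side by splitting at a frequency level $\Lambda\ge1$ chosen later. For low blocks ($\mu_j\le\Lambda$) I use $\min\le\delta d_j$ and the cumulative growth bound, which gives $C_D\delta(1+\Lambda)^{\alpha+\gamma}$. For high blocks I use $\min\{x^2,\delta d\}\le x\sqrt{\delta d}$. With $d_j\le C_d(1+\mu_j)^\gamma$ this yields
\[
\sum_{\mu_j>\Lambda}x_j\sqrt{\delta d_j}\le\sqrt{C_d\delta}\,\sup_{\mu_j>\Lambda}(1+\mu_j)^{(\gamma-s)/2}\,A .
\]
When $s\ge\gamma$ the supremum is at most $(1+\Lambda)^{(\gamma-s)/2}$. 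Balancing $\delta\Lambda^{\alpha+\gamma}$ against $A\sqrt\delta\,\Lambda^{(\gamma-s)/2}$ gives $\Lambda^{(s+\gamma+2\alpha)/2}\asymp A\delta^{-1/2}$, and substituting back yields $\delta^{\frac{s+\alpha}{s+\gamma+2\alpha}}A^{\frac{2(\alpha+\gamma)}{s+\gamma+2\alpha}}$, the second branch. If the balancing $\Lambda$ falls below $1$, I take $\Lambda=1$ (or simply drop the low part) and check the bound directly; edge cases of this kind only affect constants.

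For $0<s<\gamma$ this splitting fails, because the weight $(1+\mu)^{(\gamma-s)/2}$ increases. Here I will use a different interpolation per block: for any $t\in[0,1]$, $\min\{x^2,\delta d\}\le x^{t}(\delta d)^{1-t/2}$. I choose $t$ so that the per-block bound is a multiple of $(w_jx_j)^t$. Then $\sum (w_jx_j)^t\le(\sum w_jx_j)^t$ requires $t\le1$; this is subadditivity, valid since $t\le1$ when exponents align. More concretely, I would write $\min\{x^2,\delta d\}\le\min\{x^2,C\delta(1+\mu)^\gamma\}$ and set $y=wx$, so that the block contributes $\min\{y^2(1+\mu)^{-s},C\delta(1+\mu)^\gamma\}$. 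Maximizing over $\mu$ gives $\lesssim y^{2\gamma/(s+\gamma)}\delta^{s/(s+\gamma)}$. Since $2\gamma/(s+\gamma)\ge1$ exactly when $\gamma\ge s$, the function $y\mapsto y^{2\gamma/(s+\gamma)}$ is superadditive-friendly, and $\sum y_j^{p}\le(\sum y_j)^p$ for $p\ge1$. This gives $A^{2\gamma/(s+\gamma)}\delta^{s/(s+\gamma)}$ without using the cumulative bound, which matches the first branch. The same pointwise maximization argument also works for the boundary case $s=\gamma$.

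The main obstacle is this case analysis in the deterministic optimization: getting the correct interpolation in each regime, and handling boundary situations such as $\Lambda<\mu_1$ or $A$ small relative to $\sqrt\delta$, where the low-frequency term must be discarded. Once that is done, the constants depend only on $s,\alpha,\gamma,C_d,C_D,C_v$. Collecting terms gives \eqref{eq:main-upper-bound}, with $C\ge\max\{9C_v\cdot\text{const},\,2\}$.
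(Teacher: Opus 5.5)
Your proof is correct. The reduction through Theorem~\ref{thm:block-oracle}, the tail bound, and the dense-regime split for $s\ge\gamma$ all match the paper's proof. Your use of Proposition~\ref{prop:truncation} on $p_0-1$ is the same computation the paper does by hand.

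The genuine difference is the rough regime $0<s<\gamma$. The paper splits again, at $1+\Lambda_*=(A^2/\delta)^{1/(s+\gamma)}$. It bounds low blocks by $r_j\sqrt{v_j}\lesssim\sqrt\delta\,(1+\Lambda_*)^{(\gamma-s)/2}w_jr_j$ and high blocks by $r_j^2\le(1+\Lambda_*)^{-s}(w_jr_j)^2$. It handles the case $A^2\le\delta$ separately through $\sum_jr_j^2\le A^2$.

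You instead interpolate pointwise, using $\min\{a,b\}\le a^{\theta}b^{1-\theta}$ with $\theta=\gamma/(s+\gamma)$. This gives
$\min\{y_j^2(1+\mu_j)^{-s},\,C_vC_d\delta(1+\mu_j)^{\gamma}\}\le(C_vC_d\delta)^{s/(s+\gamma)}y_j^{2\gamma/(s+\gamma)}$
uniformly in $\mu_j$. You then sum using $\sum_jy_j^p\le(\sum_jy_j)^p$ for $p=2\gamma/(s+\gamma)\ge1$.

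Your route has several advantages:
\begin{itemize}
\item It needs no cutoff and no case split.
\item It gives the explicit constant $(C_vC_d)^{s/(s+\gamma)}$ and covers $s=\gamma$ at once.
\item It shows why a single block is least favorable: for $p>1$ the $\ell^1$--$\ell^p$ inequality is saturated only by putting the whole budget into one block. This matches the construction in Theorem~\ref{thm:minimax-lower-rough}.
\end{itemize}
The paper's route uses one template for both regimes and exhibits the critical frequency explicitly.

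Two small repairs to your write-up.

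\emph{Rough regime.} Your first formulation of this step is wrong as stated. The needed exponent $2\gamma/(s+\gamma)$ lies in $[1,2)$, not in $[0,1]$, and $\sum_jy_j^t\le(\sum_jy_j)^t$ requires $t\ge1$, not $t\le1$. Your ``more concretely'' version gets this right.

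\emph{Dense regime.} Replacing a balancing level below $1$ by $\Lambda=1$ is not merely a change of constants when $A^2\ll\delta$. In that case the low-frequency cost $\delta$ exceeds $\rho_{\alpha,\gamma,s}(A,\delta)=\delta\,(A^2/\delta)^{(\alpha+\gamma)/(s+\gamma+2\alpha)}$ by an unbounded factor. There you need your other option: discard the low part, or use $\sum_jx_j^2\le A^2\le\rho_{\alpha,\gamma,s}(A,\delta)$. This is what the paper does when $A^2\le\delta$.
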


The sufficient cutoff in \eqref{eq:explicit-cutoff} is explicit once the
blockwise noise scale $\delta_n$ has been specified. The theorem adapts to the
unknown set of detectable blocks, but the displayed construction still uses
$s$, $R$, and the density envelope entering the thresholds; it is therefore
not, by itself, adaptive to unknown smoothness, radius, or envelope.

\begin{corollary}[Calibration on homogeneous manifolds]
\label{cor:homogeneous-no-log-general}
Suppose
Assumption~\ref{ass:poly-growth} holds, and assume that $M$ is homogeneous.
Set $A_R:=R-1$ and $\delta_n:=b/n$. Choose $\Lambda_{\max,n}$ according to
\eqref{eq:explicit-cutoff}, define
$\cJ_n:=\{j\geq1:\mu_j\leq\Lambda_{\max,n}\}$, and use the Bernstein
thresholds \eqref{eq:bernstein-threshold}. If the coding levels satisfy
\eqref{eq:summable-code-local} and
\[
        \sup_n\sum_{j\in\cJ_n}d_je^{-c\xi_j}<\infty,
\]
then
\[
        \sup_{p_0\in\Pcal_s(R,b)}
        \E_{p_0}
        \norm{\widetilde p_{\cJ_n}^+-p_0}_{L^2(M)}^2
        \leq
        C\rho_{\alpha,\gamma,s}(A_R,b/n)+C\frac bn.
\]
Whenever the exponent of $\delta$ in \eqref{eq:multiplicity-sensitive-rate}
is strictly less than one, the final $b/n$ term is negligible for fixed
$R>1$ and $b$.
\end{corollary}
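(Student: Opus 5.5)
This corollary is a direct specialization of Theorem~\ref{thm:multiplicity-sensitive-upper}. The plan is to check its hypotheses (i)--(iii) with $\delta_n=b/n$ and then bound the remainder sum. The only inputs are Corollary~\ref{cor:homogeneous-threshold}, Corollary~\ref{cor:summable-coding}, and Proposition~\ref{prop:bernstein-threshold}.

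\emph{Hypothesis (i).} Every $p_0\in\Pcal_s(R,b)$ satisfies $\|p_0\|_{L^\infty}\le b$. Proposition~\ref{prop:bernstein-threshold} therefore gives Assumption~\ref{ass:block-deviation} for every $j\in\cJ_n$, with the deterministic thresholds \eqref{eq:bernstein-threshold} and remainders $\eta_j\le C'\tau_j^2e^{-c\xi_j}$. These bounds are uniform over the class, since they depend on $p_0$ only through $b$.

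\emph{Hypothesis (ii).} Because $M$ is homogeneous, Corollary~\ref{cor:homogeneous-threshold} gives $\kappa_j=d_j$ and the bound \eqref{eq:homogeneous-threshold-square}. Under \eqref{eq:summable-code-local} we have $\xi_j\le C_\xi d_j$ and $\xi_j^2/n\le 1$. Hence $(d_j+\xi_j)/n\le(1+C_\xi)d_j/n$ and $d_j\xi_j^2/n^2\le d_j/n$, which together give $\tau_j^2\le C_v(b/n)d_j$. This is exactly the first conclusion of Corollary~\ref{cor:summable-coding}, so hypothesis (ii) holds with $\delta_n=b/n$ and $C_v$ independent of $n$ and $j$.

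\emph{Hypothesis (iii).} This holds because $\Lambda_{\max,n}$ is chosen by \eqref{eq:explicit-cutoff}. Explicitly, $(1+\Lambda_{\max,n})^{s}\ge A_R^2/\rho_{\alpha,\gamma,s}(A_R,b/n)$, which is the required inequality $A_R^2(1+\Lambda_{\max,n})^{-s}\le\rho_{\alpha,\gamma,s}(A_R,b/n)$.

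Theorem~\ref{thm:multiplicity-sensitive-upper} then yields the bound $C\rho_{\alpha,\gamma,s}(A_R,b/n)+C\sum_{j\in\cJ_n}\eta_j$. By \eqref{eq:summable-code-remainder},
\[
\sum_{j\in\cJ_n}\eta_j\le C\frac bn\sum_{j\in\cJ_n}d_je^{-c\xi_j}\le C'\frac bn,
\]
using the assumed uniform bound $\sup_n\sum_{j\in\cJ_n}d_je^{-c\xi_j}<\infty$. This gives the displayed inequality.

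\emph{Final remark.} Fix $R>1$ and $b$. Then $\rho_{\alpha,\gamma,s}(A_R,b/n)$ is a constant times $(b/n)^{e}$, where $e$ is the exponent of $\delta$ in \eqref{eq:multiplicity-sensitive-rate}. If $e<1$, then $(b/n)/(b/n)^{e}=(b/n)^{1-e}\to0$, so the $b/n$ term is negligible.

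The only point needing care is that $\cJ_n$ depends on $n$ through $\Lambda_{\max,n}$. The constants in (ii) and in the remainder must therefore not depend on $\cJ_n$. This holds because \eqref{eq:summable-code-local} is imposed blockwise with fixed $C_\xi$, and the remainder sum is controlled by the explicit uniform hypothesis. Otherwise the argument is bookkeeping.
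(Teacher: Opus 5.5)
Your proposal is correct and follows the paper's intended argument: verify hypotheses (i)--(iii) of Theorem~\ref{thm:multiplicity-sensitive-upper} with $\delta_n=b/n$ using Proposition~\ref{prop:bernstein-threshold}, Corollary~\ref{cor:homogeneous-threshold} and the cutoff \eqref{eq:explicit-cutoff}, and then bound $\sum_{j\in\cJ_n}\eta_j=O(b/n)$ via Corollary~\ref{cor:summable-coding}. The paper gives no separate proof beyond this combination, so your bookkeeping supplies it faithfully, including the uniformity of the constants in $n$ and the negligibility remark.
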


\begin{corollary}[Logarithm-free expected-risk calibration on spheres and $SO(3)$]
\label{cor:explicit-no-log-examples}
Let either
$M=S^m$ with $m\geq2$, in which case
\(
        (\alpha,\gamma)=\left(\frac12,\frac{m-1}{2}\right),
\)
or let $M=SO(3)$ with a bi-invariant metric, in which case
\(
        (\alpha,\gamma)=\left(\frac12,1\right).
\)
Index the nonconstant eigenspaces by $\ell\geq1$. Let $c$ be the universal
constant in \eqref{eq:bernstein-remainder}, choose $A_0>0$ so that
\[
        cA_0>m \quad\text{on }S^m,
        \qquad
        cA_0>3 \quad\text{on }SO(3),
\]
and set
\(
        \xi_\ell:=A_0\log(e+\ell).
\)
For each $n$, use the Bernstein thresholds
\eqref{eq:bernstein-threshold}, set $\delta_n:=b/n$, choose
$\Lambda_{\max,n}$ according to \eqref{eq:explicit-cutoff}, and define
$\cJ_n:=\{j\geq1:\mu_j\leq\Lambda_{\max,n}\}$. Then there exist constants
$C>0$ and $n_0\geq1$ such that, for every $n\geq n_0$,
\begin{equation}
\label{eq:no-log-examples-upper}
        \sup_{p_0\in\Pcal_s(R,b)}
        \E_{p_0}
        \norm{\widetilde p_{\cJ_n}^+-p_0}_{L^2(M)}^2
        \leq
        C\left\{
        \rho_{\alpha,\gamma,s}(A_R,b/n)+\frac bn
        \right\}.
\end{equation}
Here $C$ may depend on the fixed manifold, $s$, and the chosen coding
constant $A_0$, but not on $n$, $R$, or $b$; the threshold $n_0$ may
additionally depend on $R$ and $b$.
Consequently, for fixed $M$, $s$, $R$, and $b$,
\[
        \sup_{p_0\in\Pcal_s(R,b)}
        \E_{p_0}
        \norm{\widetilde p_{\cJ_n}^+-p_0}_{L^2(M)}^2
        \lesssim
        \rho_{\alpha,\gamma,s}(A_R,b/n).
\]
\end{corollary}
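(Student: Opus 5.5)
The plan is to reduce the statement to Corollary~\ref{cor:homogeneous-no-log-general}. Both $S^m$ and $SO(3)$ with a bi-invariant metric are homogeneous. So the work consists of three checks: Assumption~\ref{ass:poly-growth} holds with the stated $(\alpha,\gamma)$; the coding levels $\xi_\ell=A_0\log(e+\ell)$ satisfy \eqref{eq:summable-code-local}; and $\sup_n\sum_{j\in\cJ_n}d_je^{-c\xi_j}<\infty$.

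\textbf{Spectral growth.} On $S^m$ we have $\mu_\ell=\ell(\ell+m-1)\asymp \ell^2$ and $d_\ell\asymp \ell^{m-1}$. Hence $d_\ell\lesssim(1+\mu_\ell)^{(m-1)/2}$. Moreover
\[
\sum_{1\le\ell:\mu_\ell\le\Lambda}d_\ell\lesssim \Lambda^{m/2}=\Lambda^{1/2+(m-1)/2},
\]
so $(\alpha,\gamma)=(\tfrac12,\tfrac{m-1}{2})$. On $SO(3)$ we have $\mu_\ell=\ell(\ell+1)$ and, by Peter--Weyl, $d_\ell=(2\ell+1)^2$. This gives $d_\ell\lesssim 1+\mu_\ell$ and a cumulative dimension $\lesssim\Lambda^{3/2}$, so $(\alpha,\gamma)=(\tfrac12,1)$. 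These are the calculations recorded in Appendix~\ref{sec:specializations}.

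\textbf{Coding conditions.} The condition $\xi_\ell\le C_\xi d_\ell$ holds for all $\ell\ge1$, since $\log(e+\ell)$ is dominated by $d_\ell\gtrsim \ell$ for $m\ge2$. The constant $C_\xi$ depends only on $A_0$ and the manifold.

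The condition $\xi_\ell^2\le n$ is the only place where $n_0$ enters. We need $A_0^2\log^2(e+\ell_{\max})\le n$, where $\ell_{\max}$ is the largest index with $\mu_\ell\le\Lambda_{\max,n}$. We take $\Lambda_{\max,n}$ equal to the right-hand side of \eqref{eq:explicit-cutoff}. Since $\rho_{\alpha,\gamma,s}(A_R,b/n)$ is a fixed power of $A_R$ times a positive power of $b/n$, we get $\Lambda_{\max,n}\lesssim (A_R^2 n/b)^{C}$, and hence $\ell_{\max}$ is polynomial in $n$. Therefore $\log^2(e+\ell_{\max})=O(\log^2 n)$, which is at most $n$ for $n\ge n_0(R,b,A_0,s,M)$. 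This explains why $n_0$, and only $n_0$, may depend on $R$ and $b$.

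\textbf{Summability.} The sum satisfies
\[
\sum_\ell d_\ell e^{-c\xi_\ell}=\sum_\ell d_\ell (e+\ell)^{-cA_0}.
\]
On $S^m$ the terms are $\lesssim \ell^{m-1-cA_0}$, which is summable because $cA_0>m$. On $SO(3)$ the terms are $\lesssim\ell^{2-cA_0}$, which is summable because $cA_0>3$. The resulting bound is independent of $n$ and of $\cJ_n$.

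\textbf{Conclusion.} Corollary~\ref{cor:homogeneous-no-log-general} now yields \eqref{eq:no-log-examples-upper}. The constant depends on $s$ and $C_d,C_D,C_v$ (functions of the manifold), and on $C_\xi$ and the summable series (functions of $A_0$). It does not depend on $n$, $R$, or $b$, because $b$ enters only through $\delta_n=b/n$ in Corollary~\ref{cor:summable-coding}.

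For the final statement, we fix $R$ and $b$. The exponent of $\delta$ in $\rho_{\alpha,\gamma,s}$ is strictly less than one: it equals $s/(s+\gamma)$ with $\gamma>0$, or $(s+\alpha)/(s+\gamma+2\alpha)$ with $\alpha=\tfrac12>0$. Hence $b/n=o(\rho_{\alpha,\gamma,s}(A_R,b/n))$, and the $b/n$ term is absorbed.

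\textbf{Main obstacle.} The main delicate point is tracking constants for the condition $\xi_\ell^2\le n$. One must check that the cutoff grows only polynomially in $n$, so that the logarithmic coding levels stay below $\sqrt n$. One must also check that the $R$- and $b$-dependence enters only through $n_0$ and not through $C$.
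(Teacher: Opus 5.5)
Your proposal is correct and follows essentially the same route as the paper. You reduce to Corollary~\ref{cor:homogeneous-no-log-general}, check $\xi_\ell\lesssim d_\ell$ and $\sum_\ell d_\ell(e+\ell)^{-cA_0}<\infty$ (terms of order $\ell^{m-1-cA_0}$, resp.\ $\ell^{2-cA_0}$), use the at-most-polynomial growth of the cutoff \eqref{eq:explicit-cutoff} to get $\xi_\ell^2\le n$ for $n\ge n_0(R,b)$, and absorb $b/n$ because the exponent of $\delta$ in $\rho_{\alpha,\gamma,s}$ is strictly below one. Writing the cutoff explicitly as a power of $A_R^2n/b$ and tracking why $C$ is free of $R$ and $b$ makes your version slightly more detailed than the paper's.
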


\medskip

The proof is given in Appendix~\ref{app:calibration-corollary-proofs}.

\begin{remark}[Dense risk versus signal-dependent oracle risk]
\label{remark:comparison_dense_spectral}
A dense cutoff on the same class pays
\[
        A_R^2(1+\Lambda)^{-s}+\frac{bD(\Lambda)}n.
\]
When $D(\Lambda)\asymp\Lambda^{\alpha+\gamma}$, optimization gives
\[
        A_R^{\frac{2(\alpha+\gamma)}{s+\alpha+\gamma}}
        \left(\frac bn\right)^{\frac{s}{s+\alpha+\gamma}}.
\]
At the ideal variance scale $bd_j/n$, the block oracle never loses this
benchmark, because for every $\Lambda$,
\begin{align*}
        \sum_{j\geq1}
        \min\left\{
        \norm{P_jp_0}_{L^2(M)}^2,\frac{bd_j}{n}
        \right\}
        &\leq
        \frac{bD(\Lambda)}n
        +\sum_{\mu_j>\Lambda}\norm{P_jp_0}_{L^2(M)}^2\\
        &\leq
        \frac{bD(\Lambda)}n
        +A_R^2(1+\Lambda)^{-s}.
\end{align*}
If the detectable blocks form a set $S$, their variance contribution is
approximately $\delta_n\sum_{j\in S}d_j$, rather than
$\delta_nD(\Lambda)$. Thus the signal-specific saving can be much larger than
the difference between the two worst-case powers of $n$.
\end{remark}

\section{Minimax lower bounds and optimality}
\label{sec:lower_bound}

The oracle upper bound of Theorem~\ref{thm:multiplicity-sensitive-upper}
suggests two distinct least-favorable geometries. In the dense-block regime
\(s\geq\gamma\), the difficulty is created by a spectral window containing many
active eigenspaces, and both the number of blocks and their dimensions enter
the rate. In the rough high-multiplicity regime \(0<s<\gamma\), a single large
eigenspace already produces the dominant obstruction. We show that the two
branches of \(\rho_{\alpha,\gamma,s}\) are minimax sharp under matching spectral
growth assumptions.

The lower bounds are stated directly in terms of $n$, whereas the upper
bounds inherit the effective noise level established in
Section~\ref{sec:block-shrinkage}.  On homogeneous manifolds a common code
produces the generic scale $b\log(n)/n$, while summable index-dependent codes
attain $b/n$ on $S^m$, $m\geq2$, and on $SO(3)$; hence, for fixed density
envelopes, the upper and lower rates match at the ideal scale, with any
remaining calibration gap confined to the upper bound.

For lower bounds it is convenient to work with density classes bounded both
above and below. For \(0<a\leq b<\infty\), define
\begin{equation}
\label{eq:two-sided-density-class}
        \Pcal_s(R,a,b)
        :=
        \left\{
        p:
        a\leq p\leq b,\quad
        \int_M p\,\dd\nu=1,\quad
        \norm{p}_{\Vspec_s(M)}\leq R
        \right\}.
\end{equation}
 The
lower bound \(p\geq a\) keeps the testing alternatives uniformly away from
zero and makes the usual comparisons among \(L^2\), Hellinger, and
Kullback--Leibler losses nondegenerate.

For a class \(\mathcal F\) of densities, write
\begin{equation}
\label{eq:minimax-risk}
        \mathfrak R_n(\mathcal F)
        :=
        \inf_{\widehat p}
        \sup_{p_0\in\mathcal F}
        \E_{p_0}
        \norm{\widehat p-p_0}_{L^2(M)}^2,
\end{equation}
where the infimum is over all measurable estimators based on
\(X_1,\ldots,X_n\).
Since $\Pcal_s(R,a,b)\subseteq\Pcal_s(R,b)$,
Theorem~\ref{thm:multiplicity-sensitive-upper} immediately gives the
corresponding minimax upper bound.  We next formulate the lower-growth assumptions needed for the dense-block
construction.

\begin{assumption}[Lower polynomial block growth]
\label{ass:poly-lower-growth}
There exist constants \(c_d,c_D>0\) and a threshold \(\Lambda_0\geq1\) such
that
\begin{equation}
\label{eq:poly-lower-growth}
        d_j\geq c_d(1+\mu_j)^\gamma
        \quad\text{whenever }\mu_j\geq\Lambda_0,
\end{equation}
and
\begin{equation}
\label{eq:poly-lower-cumulative-growth}
        \sum_{1\leq j:\,\mu_j\leq\Lambda}d_j
        \geq c_D(1+\Lambda)^{\alpha+\gamma}
        \quad\text{for every }\Lambda\geq\Lambda_0.
\end{equation}
\end{assumption}

The two parts of Assumption~\ref{ass:poly-lower-growth} play different roles.
The cumulative lower bound guarantees a spectral window with many degrees of
freedom, while the individual lower bound, combined with the cumulative upper
bound in Assumption~\ref{ass:poly-growth}, controls the number of distinct
blocks in that window by order \((1+\Lambda)^\alpha\).

\begin{theorem}[Lower bound in the dense-block regime]
\label{thm:minimax-lower-dense}
Assume Assumptions~\ref{ass:poly-growth} and
\ref{ass:poly-lower-growth}, and let \(s>0\). There exists a constant
\(c>0\), independent of \(R\) and \(n\), such that for every \(R>1\) there
is an integer \(n_0=n_0(R)\geq1\) for which, for every \(n\geq n_0\),
\begin{equation}
\label{eq:dense-minimax-lower}
        \mathfrak R_n\bigl(\Pcal_s(R,1/2,3/2)\bigr)
        \geq
        c\,
        A_R^{\frac{2(\alpha+\gamma)}{s+\gamma+2\alpha}}
        n^{-\frac{s+\alpha}{s+\gamma+2\alpha}}.
\end{equation}
\end{theorem}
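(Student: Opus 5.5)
The plan is a standard multiple-hypothesis (Assouad) argument, built on a spectral window where both the number of blocks and their dimensions are extremal. Fix a large frequency \(\Lambda\geq\Lambda_0\) and let \(W_\Lambda:=\{j\geq1:\Lambda<\mu_j\leq c_0\Lambda\}\) for a constant \(c_0>1\). Choose \(c_0\) so that the cumulative lower bound \eqref{eq:poly-lower-cumulative-growth} at \(c_0\Lambda\) dominates the cumulative upper bound of Assumption~\ref{ass:poly-growth} at \(\Lambda\).

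The window then has two properties. Its total dimension is \(D_W:=\sum_{j\in W_\Lambda}d_j\asymp\Lambda^{\alpha+\gamma}\). Every \(j\in W_\Lambda\) has \(d_j\asymp\Lambda^\gamma\), by \eqref{eq:poly-lower-growth} and Assumption~\ref{ass:poly-growth}. Consequently the number of blocks satisfies \(N_W:=|W_\Lambda|\lesssim\Lambda^{\alpha+\gamma}/\Lambda^\gamma=\Lambda^\alpha\), which is the role of the lower-growth assumption noted after Assumption~\ref{ass:poly-lower-growth}.

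Fix an orthonormal basis \(\{\phi_{j,\ell}\}\) of each \(E_j\). For \(\omega\in\{-1,1\}^{D_W}\) set
\[
p_\omega:=1+\varepsilon\sum_{j\in W_\Lambda}\sum_{\ell=1}^{d_j}\omega_{j\ell}\phi_{j,\ell}.
\]
Each \(p_\omega\) has unit integral. Its blocks satisfy \(\|P_jp_\omega\|_{L^2}=\varepsilon\sqrt{d_j}\), independently of \(\omega\), so by basis independence
\[
\|p_\omega\|_{\Vspec_s(M)}-1\lesssim N_W\,\Lambda^{s/2}\,\varepsilon\,\Lambda^{\gamma/2}\lesssim\varepsilon\,\Lambda^{(s+\gamma+2\alpha)/2}.
\]
I will therefore take \(\varepsilon=c_1A_R\Lambda^{-(s+\gamma+2\alpha)/2}\) to stay inside the budget \(A_R\), and \(\varepsilon^2=c_2/n\) to make neighbouring hypotheses indistinguishable. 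This forces \(\Lambda\asymp(A_R^2n)^{1/(s+\gamma+2\alpha)}\), and \(\Lambda\geq\Lambda_0\) once \(n\geq n_0(R)\).

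Assouad's lemma then finishes the argument. Flipping one sign changes the \(L^2\) loss by \(4\varepsilon^2\) per coordinate, since the \(\phi_{j,\ell}\) are orthonormal. Provided \(1/2\leq p_\omega\leq3/2\), neighbours satisfy \(\KL(p_\omega^{\otimes n},p_{\omega'}^{\otimes n})\leq n\chi^2(p_\omega,p_{\omega'})\leq 8n\varepsilon^2\leq 8c_2\). Hence
\[
\mathfrak R_n\gtrsim D_W\varepsilon^2\asymp\Lambda^{\alpha+\gamma}/n\asymp A_R^{\frac{2(\alpha+\gamma)}{s+\gamma+2\alpha}}n^{-\frac{s+\alpha}{s+\gamma+2\alpha}},
\]
which is \eqref{eq:dense-minimax-lower}.

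The main obstacle is the envelope \(1/2\leq p_\omega\leq3/2\) uniformly in \(\omega\), because no pointwise bound on \(K_j(x,x)\) is assumed. My first attempt is Cauchy--Schwarz combined with the local Weyl law \(\sup_x\sum_{\mu_j\leq c_0\Lambda}K_j(x,x)\lesssim(1+\Lambda)^{m/2}\) (Hörmander). This gives \(\|p_\omega-1\|_\infty^2\lesssim\varepsilon^2D_W\Lambda^{m/2}=\rho\,\Lambda^{m/2}\), where \(\rho\) is the rate; this vanishes only if \(m/2<(s+\alpha)(s+\gamma+2\alpha)/\ldots\), i.e.\ only in part of the parameter range.

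If that fails, I will replace the full hypercube by a subfamily. Using Varshamov--Gilbert, I will extract sign vectors with pairwise Hamming distance \(\gtrsim D_W\) such that random signs give \(\|p_\omega-1\|_\infty\lesssim\varepsilon\sqrt{\log D_W\cdot\sup_x\sum_{j\in W}K_j(x,x)}\), controlled by a Hoeffding bound plus a union bound over a \(\Lambda^{-1/2}\)-net of \(M\) via the Bernstein inequality for \(E_{\leq c_0\Lambda}\). I will then apply Fano in place of Assouad. If the envelope still fails, I would shrink the amplitudes on a subset of blocks, at the cost of constants only.

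I expect this sup-norm control, rather than the rate bookkeeping, to be the delicate step.
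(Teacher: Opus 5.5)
Your argument is correct, but it takes a different route from the paper in how the hypotheses are built and how the envelope $1/2\le p\le 3/2$ is secured.

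\textbf{The paper's route.} It takes uniformly random directions on the unit sphere of the whole window $\mathcal H_\Lambda$. It gets $\|U_\Lambda\|_\infty\lesssim\sqrt{\log\Lambda}$ from the Burq--Lebeau randomization theorem and extracts an exponentially large $L^2$-separated spherical packing (Proposition~\ref{prop:bounded-spherical-packing}). It then bounds the block variation by Cauchy--Schwarz across the $\lesssim\Lambda^\alpha$ blocks and applies Fano.

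\textbf{Your route.} You work on the coordinate hypercube of a fixed eigenbasis. Every hypothesis then has the same block norms $\varepsilon\sqrt{d_j}$, so the variation budget is checked exactly rather than through Cauchy--Schwarz. Your envelope control is elementary: the pointwise Weyl bound $\sup_xK_{\le c_0\Lambda}(x,x)\lesssim\Lambda^{m/2}$, applied either deterministically or pointwise with Hoeffding and a net.

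\textbf{Completing your two cases.} The two growth assumptions together with Weyl's law force $\alpha+\gamma=m/2$. So the condition you left unfinished for the full cube is just $s+\alpha>m/2$, i.e.\ $s>\gamma$. In that case Assouad covers the interior of the dense-block regime with no probabilistic input at all. The random-sign fallback always succeeds, because
\[
\varepsilon^2\Lambda^{m/2}\log\Lambda\asymp A_R^2\Lambda^{m/2-(s+\gamma+2\alpha)}\log\Lambda=A_R^2\Lambda^{-(s+\alpha)}\log\Lambda\longrightarrow0 .
\]
Your ``shrink amplitudes'' contingency is therefore never needed.

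\textbf{Two points to state correctly in the fallback.}
\begin{enumerate}
\item Run Varshamov--Gilbert greedily inside the set of good sign vectors, which carries at least half the mass of the cube. This still gives $\log|\mathcal C|\gtrsim D_W$. A fixed code whose words you merely hope are bounded is not enough.
\item The sharp band-limited Bernstein inequality is unnecessary for the net step. A polynomial Lipschitz bound from Sobolev embedding, $\|\nabla g\|_\infty\lesssim(1+c_0\Lambda)^{k/2}\|g\|_{L^2}$ with $k>m/2+1$, plus a polynomially fine net suffices, since the union bound costs only a logarithm.
\end{enumerate}

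\textbf{What each approach buys.} Yours is a self-contained proof that replaces Burq--Lebeau by Hoeffding and the local Weyl law. This is the same mechanism the paper itself uses for a single eigenspace in Lemma~\ref{lem:random-eigenspace-supnorm}. The paper's route buys one packing proposition that is reused verbatim for the rough-regime lower bound.
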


The lower bound in Theorem~\ref{thm:minimax-lower-dense} is valid for every
\(s>0\). In the dense-block regime \(s\geq\gamma\), it matches the second
branch of Theorem~\ref{thm:multiplicity-sensitive-upper} at the ideal squared-noise scale
\(\delta_n\asymp n^{-1}\), including the dependence on the
budget \(A_R\). The proof uses a packing in a fixed-width spectral window. A
randomization theorem of Burq and Lebeau~\cite{burq2013injections} supplies
directions in that window whose \(L^\infty\)-norms are
\(O(\sqrt{\log\Lambda})\), while the block-variation constraint charges the
square root of the number of distinct eigenspaces.
The dense construction is not sharp when \(0<s<\gamma\). In that regime the
hardest alternatives can be confined to a single high-dimensional eigenspace,
so we impose the following nondegeneracy condition.

\begin{assumption}[Large eigenspaces with controlled diagonal]
\label{ass:single-block-lower}
There exist constants \(c_d,C_K>0\) and a threshold \(\Lambda_0\geq1\) such
that, for every \(j\) with \(\mu_j\geq\Lambda_0\),
\begin{equation}
\label{eq:single-block-lower}
        d_j\geq c_d(1+\mu_j)^\gamma,
        \qquad
        \sup_{x\in M}K_j(x,x)\leq C_Kd_j.
\end{equation}
\end{assumption}

The projector-diagonal condition holds on every compact homogeneous manifold
with normalized volume, because \(K_j(x,x)=d_j\).

\begin{theorem}[Lower bound in the rough high-multiplicity regime]
\label{thm:minimax-lower-rough}
Assume Assumption~\ref{ass:single-block-lower}, and let \(0<s<\gamma\).
There exists a constant \(c>0\), independent of \(R\) and \(n\), such that
for every \(R>1\) there is an integer \(n_0=n_0(R)\geq1\) for which, for
every \(n\geq n_0\),
\begin{equation}
\label{eq:rough-minimax-lower}
        \mathfrak R_n\bigl(\Pcal_s(R,1/2,3/2)\bigr)
        \geq
        c\,
        A_R^{\frac{2\gamma}{s+\gamma}}
        n^{-\frac{s}{s+\gamma}}.
\end{equation}
\end{theorem}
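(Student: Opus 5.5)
The plan is a Fano (or Assouad-type) argument with all alternatives supported in a single large eigenspace $E_j$. Fix $j$ with $\mu_j\geq\Lambda_0$, to be tuned later, and write $d=d_j$. By Varshamov--Gilbert there is a family $\omega^{(1)},\dots,\omega^{(N)}\in\{-1,1\}^{d}$ with $\log N\geq c\,d$ and pairwise Hamming distance at least $d/8$. Fix an orthonormal basis $\phi_{j,1},\dots,\phi_{j,d}$ of $E_j$ and set
\[
p_\omega=1+\frac{h}{\sqrt d}\sum_{\ell=1}^d\omega_\ell\phi_{j,\ell},
\]
so that $\|P_jp_\omega\|_{L^2}=h$ and, since $\int_M\phi_{j,\ell}\,d\nu=0$, each $p_\omega$ has unit integral. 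The variation constraint reduces to the single condition $(1+\mu_j)^{s/2}h\leq A_R$. Pairwise separations satisfy $\|p_\omega-p_{\omega'}\|_{L^2}^2\geq h^2/2$.

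The main obstacle is the sup-norm requirement $1/2\leq p_\omega\leq 3/2$. The naive bound is $\|\sum_\ell\omega_\ell\phi_{j,\ell}\|_\infty\leq\sqrt{d\,K_j(x,x)}\leq\sqrt{C_K}\,d$, which would force $h\lesssim d^{-1/2}$ and is too weak. I would instead randomize the signs and choose the packing among sign vectors whose function has small sup norm. For fixed $x$, $\sum_\ell\omega_\ell\phi_{j,\ell}(x)$ is subgaussian with variance proxy $K_j(x,x)\leq C_Kd$. Since $E_j$ consists of eigenfunctions with eigenvalue $\mu_j$, Bernstein-type inequalities let one pass to a net of $M$ of polynomial size in $(1+\mu_j)$, giving $\|\cdot\|_\infty\lesssim\sqrt{d\log(2+\mu_j)}$ with probability at least $1/2$. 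This is exactly the Burq--Lebeau mechanism cited earlier. Then a random Varshamov--Gilbert construction (drawing $e^{cd}$ independent random sign vectors and discarding bad ones and close pairs) yields a packing with $\log N\gtrsim d$, separation $\gtrsim d$, and $\|p_\omega-1\|_\infty\lesssim h\sqrt{\log(2+\mu_j)}$.

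That log factor is the sticking point. To remove it I would use a conditioning trick: replace the fixed basis by a Gaussian or Haar-random orthonormal frame inside $E_j$, or use a Bourgain-type result giving $O(\sqrt d)$ with high probability when $K_j(x,x)\leq C_Kd$. Failing that, I would note that $h^2$ is of order $d/n\to0$, and in the rough regime $h\asymp A_R^{\gamma/(s+\gamma)}n^{-s/(2(s+\gamma))}$ decays polynomially in $n$ while $\log\mu_j\lesssim\log n$. So $h\sqrt{\log\mu_j}\leq1/2$ holds for $n\geq n_0(R)$, and the log affects only the sup-norm constraint, not the rate. This is precisely why the theorem allows $n_0=n_0(R)$.

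With $1/2\leq p_\omega\leq3/2$, the divergence satisfies $\KL(p_\omega^{\otimes n},p_{\omega'}^{\otimes n})\leq 2n\chi^2\leq 4n\|p_\omega-p_{\omega'}\|_2^2\leq 16nh^2$. Fano then requires $nh^2\leq c'd$. Take $h^2=c'd/n$ and impose the variation constraint $(1+\mu_j)^s\,c'd/n\leq A_R^2$. With $d\asymp(1+\mu_j)^\gamma$, which lies between $c_d(1+\mu_j)^\gamma$ and, after restricting to $\mu_j$ where the diagonal bound gives $d\leq C(1+\mu_j)^\gamma$, the largest allowed level satisfies $(1+\mu_j)^{s+\gamma}\asymp A_R^2n$. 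This gives the risk
\[
h^2\asymp\frac{(1+\mu_j)^\gamma}{n}\asymp A_R^{2\gamma/(s+\gamma)}\,n^{-s/(s+\gamma)}.
\]
The spectrum is discrete, so one must pick $\mu_j$ near the target level. The gaps are controlled because $d_j\geq c_d(1+\mu_j)^\gamma$ and $\sum d_j$ grow polynomially, and since $0<s<\gamma$ one can take the largest eigenvalue below the target, losing only constants. Finally, $n_0(R)$ guarantees that $\mu_j\geq\Lambda_0$ and that the sup-norm condition holds.
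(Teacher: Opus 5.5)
Your strategy is the paper's. You take alternatives $1+f$ with $f$ in one high-multiplicity eigenspace and build the packing at random. Pointwise sub-Gaussian control comes from $K_j(x,x)\le C_Kd_j$, and the passage to $\sup_M$ uses an eigenfunction gradient bound on a polynomial-size net. The resulting $\sqrt{\log(2+\mu_j)}$ is absorbed into $n_0(R)$; your fallback is exactly what the paper does, so the log-removal speculation can be dropped. Fano is then applied at the level $(1+\mu_j)^{s+\gamma}\asymp A_R^2n$. Using Rademacher signs in a fixed basis, rather than a uniform direction on the unit sphere of $E_j$, is an inessential variation. Two steps in your final paragraph, however, do not go through as written.

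First, taking $h^2=c'd_j/n$ saturates the Fano condition and forces you to invoke $d_j\le C(1+\mu_j)^\gamma$. The diagonal condition $\sup_xK_j(x,x)\le C_Kd_j$ is relative to $d_j$ and says nothing about its size, and Assumption~\ref{ass:single-block-lower} bounds $d_j$ only from below. So $h^2=c'd_j/n$ may violate $(1+\mu_j)^{s/2}h\le A_R$ at every eigenvalue near the target level. Saturate the variation constraint instead, $h=c_\varepsilon A_R(1+\mu_j)^{-s/2}$, as the paper does. The Fano requirement $nh^2\lesssim\log N$ then needs only $\log N\gtrsim d_j\ge c_d(1+\mu_j)^\gamma$.

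Second, to lose only constants you need, for every large $\Lambda$, an eigenvalue in $(\kappa\Lambda,\Lambda]$ with $\kappa$ fixed. Neither $d_j\ge c_d(1+\mu_j)^\gamma$ nor $s<\gamma$ gives this. A lower bound on individual multiplicities, combined with an upper bound on the counting function, only limits how many distinct eigenvalues exist, which is the wrong direction. The operative input is a lower bound on the counting function, namely Weyl's law $D(\Lambda)\sim c_M\Lambda^{m/2}$, which yields $D(\Lambda)-D(\Lambda/2)>0$ for large $\Lambda$. This is how the paper selects $\mu_{j_\Lambda}\in(\Lambda/2,\Lambda]$. If ``$\sum d_j$ grow polynomially'' was meant as this two-sided Weyl growth, that is fine, but the multiplicity lower bound plays no role there. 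The hypothesis $s<\gamma$ is used only to guarantee $\gamma>0$, so that $d_j\to\infty$ and the packing is exponentially large.
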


At the ideal squared-noise scale,
Theorem~\ref{thm:minimax-lower-rough} matches the first branch of
Theorem~\ref{thm:multiplicity-sensitive-upper}. The proof constructs a packing inside one
eigenspace of dimension of order \((1+\mu_j)^\gamma\). The diagonal bound in
\eqref{eq:single-block-lower} yields a logarithmic sup-norm bound for a random
unit direction; because the testing amplitude decays polynomially, this
logarithm affects only the threshold \(n_0\), not the minimax rate.

Combining the two lower bounds with the calibrations of Section~\ref{sec:block-shrinkage}
gives the following sharp comparisons.

\begin{corollary}[Minimax rates on standard homogeneous manifolds]
\label{cor:minimax-standard-homogeneous}
Fix \(R>1\). Let
\[
        \mathfrak R_n^M(s,R)
        :=
        \inf_{\widehat p}
        \sup_{p_0\in\Pcal_s(R,1/2,3/2)}
        \E_{p_0}\norm{\widehat p-p_0}_{L^2(M)}^2.
\]
Suppose \(M\) is compact, connected, and homogeneous, and assume the matching
growth laws
\[
        d_j\asymp(1+\mu_j)^\gamma,
        \qquad
        \sum_{1\leq j:\,\mu_j\leq\Lambda}d_j
        \asymp(1+\Lambda)^{\alpha+\gamma}.
\]
For fixed \(M,s,R\), as \(n\to\infty\), the common-code expected-risk
calibration gives
\begin{equation}
\label{eq:minimax-common-code}
        c\rho_{\alpha,\gamma,s}(A_R,n^{-1})
        \leq
        \mathfrak R_n^M(s,R)
        \leq
        C\rho_{\alpha,\gamma,s}(A_R,L_n/n),
        \qquad L_n\asymp\log n.
\end{equation}
If, in addition, the summable-code conditions of
Corollary~\ref{cor:homogeneous-no-log-general} hold, then
\begin{equation}
\label{eq:minimax-summable-calibration}
        \mathfrak R_n^M(s,R)
        \leq
        C\left\{
        \rho_{\alpha,\gamma,s}(A_R,n^{-1})+n^{-1}
        \right\}.
\end{equation}
For the following examples, the summable expected-risk calibration is
available and the upper and lower rates match without logarithmic inflation.

\smallskip
\noindent\textnormal{(i)} \textbf{Spheres of dimension at least two.}
For \(M=S^m\), \(m\geq2\),
\(\alpha=\frac12\), \( \gamma=\frac{m-1}{2},\)
and
\[
        \mathfrak R_n^{S^m}(s,R)
        \asymp
        \begin{cases}
        (R-1)^{\frac{2(m-1)}{2s+m-1}}
        n^{-\frac{2s}{2s+m-1}},
        &0<s<\frac{m-1}{2},\\[0.9em]
        (R-1)^{\frac{2m}{2s+m+1}}
        n^{-\frac{2s+1}{2s+m+1}},
        &s\geq\frac{m-1}{2}.
        \end{cases}
\]

\[
\text{For }S^2,\text{ this becomes}\qquad
        \mathfrak R_n^{S^2}(s,R)
        \asymp
        \begin{cases}
        (R-1)^{\frac{2}{2s+1}}n^{-\frac{2s}{2s+1}},
        &0<s<\frac12,\\[0.8em]
        (R-1)^{\frac{4}{2s+3}}n^{-\frac{2s+1}{2s+3}},
        &s\geq\frac12.
        \end{cases}
\]

\smallskip
\noindent\textnormal{(ii)} \textbf{The rotation group.}
For \(M=SO(3)\) with a bi-invariant metric,
\(
        \alpha=\frac12\),
        \(\gamma=1,\)
and
\[
        \mathfrak R_n^{SO(3)}(s,R)
        \asymp
        \begin{cases}
        (R-1)^{\frac{2}{s+1}}n^{-\frac{s}{s+1}},
        &0<s<1,\\[0.8em]
        (R-1)^{\frac{3}{s+2}}n^{-\frac{s+1/2}{s+2}},
        &s\geq1.
        \end{cases}
\]
For \(S^1\), the multiplicities are bounded and the present summable-code
argument does not remove the common logarithmic factor.
\end{corollary}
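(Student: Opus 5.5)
The corollary follows by combining the two lower bounds with the upper-bound calibrations; no new probabilistic argument is needed. On the lower side, the homogeneity of \(M\) gives \(K_j(x,x)=d_j\), so Assumption~\ref{ass:single-block-lower} holds with \(C_K=1\) and the individual lower growth \(d_j\gtrsim(1+\mu_j)^\gamma\). Together with the cumulative lower growth, this also verifies Assumption~\ref{ass:poly-lower-growth}, and the matching upper laws give Assumption~\ref{ass:poly-growth}.

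\textbf{Lower bound.} Theorem~\ref{thm:minimax-lower-dense} gives \(c\,A_R^{2(\alpha+\gamma)/(s+\gamma+2\alpha)}n^{-(s+\alpha)/(s+\gamma+2\alpha)}\) for every \(s>0\). For \(0<s<\gamma\), Theorem~\ref{thm:minimax-lower-rough} gives the first branch. Combining the two yields \(\mathfrak R_n^M\geq c\,\rho_{\alpha,\gamma,s}(A_R,n^{-1})\) for \(n\geq n_0(R)\). Since \(R\) is fixed, the finitely many remaining \(n\) are absorbed into the constant.

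\textbf{Upper bound.} Since \(\Pcal_s(R,1/2,3/2)\subseteq\Pcal_s(R,3/2)\), it suffices to apply Theorem~\ref{thm:multiplicity-sensitive-upper} with \(b=3/2\). For the common code, take \(\xi_j=A_0\log n\) with \(A_0\) large. By Corollary~\ref{cor:homogeneous-threshold}, \(\tau_j^2\lesssim (d_j+L_n)/n+d_jL_n^2/n^2\lesssim d_jL_n/n\) once \(L_n^2\le n\), so \(\delta_n\asymp L_n/n\). The remainder satisfies \(\sum_{j\in\cJ_n}\eta_j\lesssim \sum_{j\in\cJ_n}\tau_j^2 n^{-cA_0}\lesssim D(\Lambda_{\max,n})\,n^{-cA_0}\). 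Because the cutoff \eqref{eq:explicit-cutoff} is polynomial in \(n\), this term is \(O(n^{-1})\) for \(A_0\) large, and hence dominated by \(\rho(A_R,L_n/n)\), whose \(\delta\)-exponent is at most one. The summable-code statement \eqref{eq:minimax-summable-calibration} is exactly Corollary~\ref{cor:homogeneous-no-log-general} with \(b=3/2\).

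\textbf{Examples.} For \(S^m\) and \(SO(3)\), the summable calibration is supplied by Corollary~\ref{cor:explicit-no-log-examples}. It remains to check that \(n^{-1}\) is negligible, i.e.\ that the \(\delta\)-exponent is strictly less than one. In the first branch this holds because \(s/(s+\gamma)<1\) when \(\gamma>0\); in the second because \((s+\alpha)/(s+\gamma+2\alpha)<1\) when \(\alpha+\gamma>0\). With this, the upper and lower bounds match.

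\textbf{Growth exponents.} On \(S^m\), \(d_\ell\asymp\ell^{m-1}\) and \(\mu_\ell\asymp\ell^2\), so \(\gamma=(m-1)/2\). The number of blocks below \(\Lambda\) is \(\asymp\Lambda^{1/2}\) and \(D(\Lambda)\asymp\Lambda^{m/2}\), giving \(\alpha=1/2\). On \(SO(3)\), \(d_\ell=(2\ell+1)^2\) and \(\mu_\ell=\ell(\ell+1)\), so \(\gamma=1\) and \(D\asymp\Lambda^{3/2}\), giving \(\alpha=1/2\). Substituting these into \(\rho\) is routine algebra. For instance, on \(S^m\) with \(s\geq\gamma\), the exponent \((s+\tfrac12)/(s+\tfrac{m-1}{2}+1)\) equals \((2s+1)/(2s+m+1)\), and the \(A_R\)-exponent becomes \(2m/(2s+m+1)\).

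\textbf{Main obstacle.} The delicate step is the common-code upper bound. One must verify that \(\xi_j=A_0\log n\) satisfies \(\xi_j\lesssim d_j\)-type absorption uniformly enough to make \(\tau_j^2\lesssim d_jL_n/n\). This holds because \(d_j\geq1\), so \(L_n+d_j\le 2L_nd_j\). One must also verify that the polynomial size of \(D(\Lambda_{\max,n})\) is beaten by \(n^{-cA_0}\), which requires choosing \(A_0\) depending on \(\alpha,\gamma,s\).
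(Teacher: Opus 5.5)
Your proposal is correct and follows the paper's proof essentially step by step. The lower bound comes from Theorems~\ref{thm:minimax-lower-dense} and \ref{thm:minimax-lower-rough}; the common-code upper bound comes from the inclusion into $\Pcal_s(R,3/2)$ and Theorem~\ref{thm:multiplicity-sensitive-upper}, with $\sum_j\eta_j\lesssim D(\Lambda_{\max,n})n^{-cA_0}$ made negligible by taking $A_0$ large; and the summable-code and example statements come from Corollaries~\ref{cor:homogeneous-no-log-general} and \ref{cor:explicit-no-log-examples}. Your explicit check of the lower-growth assumptions via $K_j(x,x)=d_j$ and your cruder remainder bound $O(n^{-1})\lesssim\rho_{\alpha,\gamma,s}(A_R,L_n/n)$, in place of the paper's $o(\cdot)$, are harmless variations.
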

The proof is given in Appendix~\ref{app:minimax-short-proofs}.


\section{A positive likelihood extension}
\label{sec:positive-spectral-sieves}

The projected block-shrinkage estimator is a genuine density, but it is not
likelihood based and does not directly control Kullback--Leibler or Hellinger
loss.  We therefore record a complementary exponential-family construction.
For a measurable function \(u\), set
\[
        A(u):=\log\int_M e^{u(y)}\,d\nu(y),
        \qquad
        p_u(x):=e^{u(x)-A(u)},
\]
and impose \(\int_M u\,d\nu=0\) for identifiability.  The regularity assumption
in this section is placed on the centered log-density, rather than directly on
the density.  Specifically, for \(q>0\),
\[
        \mathcal U_q(R,B)
        :=
        \left\{
        u:\ \int_Mu\,d\nu=0,\quad
        \|u\|_{\Vspec_q(M)}\le R,\quad
        \|u\|_{L^\infty(M)}\le B
        \right\}.
\]
Because \(u\mapsto p_u\) is nonlinear, this is a different parameter class
from \(\Pcal_s(R,b)\).

Let \(J\subset\{j\ge1\}\) be finite and
\(F_J:=\bigoplus_{j\in J}E_j\).  Write
\[
        \mathbb P_n f:=\frac1n\sum_{i=1}^nf(X_i),
        \qquad
        \mathbb P_0 f:=\int_Mfp_0\,d\nu,
        \qquad
        F_J(B):=\{u\in F_J:\|u\|_\infty\le B\}.
\]
For penalty levels \(\lambda_j>0\), define
\begin{equation}\label{eq:positive-sieve-estimator}
        \widehat u_J
        :=
        \argmin_{u\in F_J(B)}
        \left\{
        -\mathbb P_n u+A(u)
        +4\sum_{j\in J}\lambda_j\|P_ju\|_{L^2(M)}
        \right\},
        \qquad
        \widehat p_J:=p_{\widehat u_J}.
\end{equation}
The estimator is positive and normalized by construction.  Its penalty is
intrinsic because each block norm is unchanged by an orthogonal rotation of an
eigenbasis inside \(E_j\).  The feasible set \(F_J(B)\) is compact and convex.
Moreover, for every \(v\in F_J\) and every nonzero \(h\in F_J\),
\[
        D^2A(v)[h,h]=\operatorname{Var}_{p_v}(h)>0,
\]
because \(F_J\) contains no nonzero constant functions.  Hence the objective
in \eqref{eq:positive-sieve-estimator} is strictly convex on \(F_J(B)\), so the
minimizer exists and is unique.  

For an arbitrary orthonormal basis
\(\{\phi_{j,\ell}\}_{\ell=1}^{d_j}\) of \(E_j\), let
\(\Phi_j=(\phi_{j,1},\ldots,\phi_{j,d_j})\),
\(Z_j=(\mathbb P_n-\mathbb P_0)\Phi_j\), and
\begin{equation}\label{eq:positive-event}
        \mathcal E_J
        :=
        \left\{\|Z_j\|_2\le\lambda_j\ \text{for every }j\in J\right\}.
\end{equation}
An orthogonal change of basis in \(E_j\) rotates \(Z_j\) without changing its
Euclidean norm, so \(\mathcal E_J\) is basis independent.
The positive estimator is also equivariant under isometries.  For an isometry
\(T\), write \(TX=(TX_i)_{i=1}^n\).  Uniqueness of the minimizer and invariance
of \(A\), \(F_J(B)\), and the block penalty give
\[
        \widehat u_J^{\,TX}=\widehat u_J^{\,X}\circ T^{-1},
        \qquad
        \widehat p_J^{\,TX}=\widehat p_J^{\,X}\circ T^{-1}.
\]

\begin{theorem}[Block likelihood oracle inequality]
\label{thm:positive-sieve-oracle}
Assume that \(p_0=p_{u_0}\) for a mean-zero \(u_0\) satisfying
\(\|u_0\|_\infty\le B\).  On the event \(\mathcal E_J\), the following
inequality holds simultaneously for every \(u\in F_J(B)\):
\begin{equation}\label{eq:positive-kl-oracle}
        \KL(p_0,\widehat p_J)
        \le
        C_B\left[
        \|u-u_0\|_{L^2(M)}^2
        +\sum_{j\in S(u)}\lambda_j^2
        \right],
        \qquad
        S(u):=\{j\in J:P_ju\ne0\},
\end{equation}
where \(C_B\) depends only on \(B\).  Consequently, still on
\(\mathcal E_J\),
\begin{equation}\label{eq:positive-hellinger-oracle}
        h^2(\widehat p_J,p_0)
        \le
        C_B\inf_{u\in F_J(B)}
        \left[
        \|u-u_0\|_{L^2(M)}^2
        +\sum_{j\in S(u)}\lambda_j^2
        \right],
\end{equation}
where \(h^2(p,q)=\frac12\int_M(\sqrt p-\sqrt q)^2\,d\nu\).
\end{theorem}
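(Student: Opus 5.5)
The plan is to run the standard penalized-M-estimation argument for exponential families, with the $\ell^1/\ell^2$ group penalty taking the place of the Lasso. Fix $u\in F_J(B)$ and write $\widehat u=\widehat u_J$. I start from the basic inequality given by optimality of $\widehat u$:
\[
        -\mathbb P_n\widehat u+A(\widehat u)+4\sum_{j\in J}\lambda_j\|P_j\widehat u\|_{L^2(M)}
        \le
        -\mathbb P_n u+A(u)+4\sum_{j\in J}\lambda_j\|P_ju\|_{L^2(M)} .
\]
Because $u_0$, $u$ and $\widehat u$ are all mean zero, $\KL(p_0,p_v)=A(v)-A(u_0)-\mathbb P_0(v-u_0)$. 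Substituting this into the basic inequality gives
\[
        \KL(p_0,p_{\widehat u})\le \KL(p_0,p_u)+(\mathbb P_n-\mathbb P_0)(\widehat u-u)+4\sum_j\lambda_j\bigl(\|P_ju\|-\|P_j\widehat u\|\bigr).
\]
The empirical term decomposes blockwise. Writing $P_j(\widehat u-u)=\langle\theta_j,\Phi_j\rangle$, Cauchy--Schwarz gives $|(\mathbb P_n-\mathbb P_0)P_j(\widehat u-u)|\le\|Z_j\|_2\|P_j(\widehat u-u)\|_{L^2(M)}$. On $\mathcal E_J$ this is at most $\lambda_j\|P_j(\widehat u-u)\|$. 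The argument is basis free, since $\|\theta_j\|_2=\|P_j(\widehat u-u)\|_{L^2(M)}$.

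Next I split the blocks into $S=S(u)$ and $J\setminus S$. For $j\notin S$ the penalty difference equals $-4\lambda_j\|P_j\widehat u\|$, and this absorbs the noise term $\lambda_j\|P_j\widehat u\|$ with room to spare. For $j\in S$ I use $\|P_ju\|-\|P_j\widehat u\|\le\|P_j(\widehat u-u)\|$. The result is
\[
        \KL(p_0,p_{\widehat u})+3\sum_{j\notin S}\lambda_j\|P_j\widehat u\|\le \KL(p_0,p_u)+5\sum_{j\in S}\lambda_j\|P_j(\widehat u-u)\|.
\]
Two further ingredients are needed.

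\emph{Bounded-log-density equivalence.} For mean-zero $v,w$ with $\|v\|_\infty,\|w\|_\infty\le B$, the quantity $\KL(p_w,p_v)$ is comparable to $\|v-w\|_{L^2(M)}^2$ with constants depending only on $B$. The reason is that $D^2A(\cdot)[h,h]=\operatorname{Var}_{p_\cdot}(h)$, and for mean-zero $h$ this lies between $e^{-4B}\|h\|_2^2$ and $e^{4B}\|h\|_2^2$, because densities along the segment lie in $[e^{-2B},e^{2B}]$. Taylor's formula with integral remainder then gives $\KL(p_0,p_u)\le C_B\|u-u_0\|_2^2$. The same fact yields the lower bound
\[
        \KL(p_0,p_{\widehat u})\ge c_B\|\widehat u-u_0\|_2^2 .
\]

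\emph{Closing the argument.} By Cauchy--Schwarz and Young's inequality,
\[
        5\sum_{j\in S}\lambda_j\|P_j(\widehat u-u)\|\le 5\Bigl(\sum_{S}\lambda_j^2\Bigr)^{1/2}\|\widehat u-u\|_2\le \tfrac{c_B}{4}\|\widehat u-u\|_2^2+C'_B\sum_S\lambda_j^2 .
\]
The triangle inequality gives $\|\widehat u-u\|_2^2\le 2\|\widehat u-u_0\|_2^2+2\|u-u_0\|_2^2$. Half of $\KL(p_0,p_{\widehat u})$ then absorbs the $\widehat u$ part, which proves \eqref{eq:positive-kl-oracle} simultaneously for all $u$, since $\mathcal E_J$ does not depend on $u$. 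Finally, $h^2\le\KL$, with the same normalization up to a constant, gives \eqref{eq:positive-hellinger-oracle} after taking the infimum over $u$.

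The main obstacle is the sup-norm control needed for the quadratic equivalence. This needs care because $\widehat u$ and $u$ only satisfy $\|\cdot\|_\infty\le B$ through the sieve constraint, and $u_0$ through the assumption. The convex segment between any two of them stays in the $B$-ball, so the variance bounds hold uniformly along it. I will verify that no cone condition is needed. With $L^2$ as the comparison norm the identity $\|\sum_S P_jh\|_2\le\|h\|_2$ is free, so the constant factors can be chosen without a compatibility condition. The remaining loose end is to keep the constants consistent (the factor 4 in front of the penalty against the absorption), which I expect to be routine.
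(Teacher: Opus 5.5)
Your proposal is correct and follows essentially the same route as the paper. That route is the basic inequality, blockwise Cauchy--Schwarz on $\mathcal E_J$, and decomposability giving the factor $5$ on $S(u)$, followed by the two-sided $\KL$--$L^2$ equivalence on the $B$-ball and Young's inequality to absorb half of $\KL(p_0,\widehat p_J)$. The only difference is that your variance constants $e^{\pm4B}$ are looser than the paper's $e^{-2B}$ and $e^{B}$ (which come from $0\le A(v)\le B$ by Jensen), and this affects only the value of $C_B$.
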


The proof, including the required KL--\(L^2\) equivalence on bounded
log-density paths, is given in Appendix~\ref{app:positive-likelihood}.

\begin{corollary}[Multiplicity-sensitive rate for the positive sieve]
\label{cor:positive-sieve-rate}
Assume the polynomial block-growth condition of
Assumption~\ref{ass:poly-growth} and suppose that, for some \(\beta\ge0\),
\[
        \kappa_j:=\sup_{x\in M}K_j(x,x)
        \le C_\kappa(1+\mu_j)^\beta,
        \qquad j\ge1.
\]
Let \(q\ge\beta\), \(R>0\), \(B>0\), and
\(B_\star\ge\max\{B,C_\kappa^{1/2}R\}\).  Put
\(J_n=\{j\ge1:\mu_j\le\Lambda_{\max,n}\}\), and use
\eqref{eq:positive-sieve-estimator} with \(J=J_n\) and \(B=B_\star\).
Suppose that
\begin{equation}\label{eq:lambda-delta-condition}
        \lambda_j^2\le C_\lambda\delta_nd_j,
        \qquad j\in J_n,
\end{equation}
that
\begin{equation}\label{eq:event-complement-rate}
        \sup_{u_0\in\mathcal U_q(R,B)}
        \mathbb P_{p_{u_0}}(\mathcal E_{J_n}^c)
        =o\!\left(\rho_n^{\log}(R,\delta_n)\right),
\end{equation}
where
\begin{equation}\label{eq:positive-log-rate}
        \rho_n^{\log}(R,\delta_n)
        :=\rho_{\alpha,\gamma,q}(R,\delta_n),
\end{equation}
and that
\begin{equation}\label{eq:positive-tail-choice}
        R^2(1+\Lambda_{\max,n})^{-q}
        \le\rho_n^{\log}(R,\delta_n).
\end{equation}
Then there is a constant
\(C=C(B_\star,q,\alpha,\gamma,C_d,C_D,C_\lambda,C_\kappa)>0\),
independent of \(n\) and of \(u_0\), such that, for all sufficiently large
\(n\),
\[
\begin{aligned}
        &\sup_{u_0\in\mathcal U_q(R,B)}
        \mathbb E_{p_{u_0}}\KL(p_{u_0},\widehat p_{J_n})\vee
        \sup_{u_0\in\mathcal U_q(R,B)}
        \mathbb E_{p_{u_0}}h^2(\widehat p_{J_n},p_{u_0})\\
        &\qquad\vee
        \sup_{u_0\in\mathcal U_q(R,B)}
        \mathbb E_{p_{u_0}}
        \|\widehat p_{J_n}-p_{u_0}\|_{L^2(M)}^2
        \le C\rho_n^{\log}(R,\delta_n).
\end{aligned}
\]
\end{corollary}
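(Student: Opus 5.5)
The plan is to combine the oracle inequality of Theorem~\ref{thm:positive-sieve-oracle} on \(\mathcal E_{J_n}\) with a trivial bound off that event. On \(\mathcal E_{J_n}^c\), every quantity is bounded by a constant depending only on \(B_\star\). Both \(\widehat u_{J_n}\) and \(u_0\) have sup-norm at most \(B_\star\), so \(|A(u)|\le B_\star\), \(|\log(p_0/\widehat p_{J_n})|\le 4B_\star\), and \(\KL,h^2,\|\widehat p-p_0\|_2^2\le C(B_\star)\). This part therefore contributes \(C\,\mathbb P(\mathcal E_{J_n}^c)=o(\rho_n^{\log})\) by \eqref{eq:event-complement-rate}. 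Hellinger is dominated by \(\KL\). The \(L^2\) loss is handled by noting \(|p-p_0|\le C(B_\star)|\sqrt p-\sqrt{p_0}|\) because both densities are bounded by \(e^{2B_\star}\), which gives \(\|\widehat p-p_0\|_2^2\le C h^2\). It thus suffices to bound \(\inf_{u\in F_{J_n}(B_\star)}[\|u-u_0\|_2^2+\sum_{j\in S(u)}\lambda_j^2]\) deterministically, uniformly over \(u_0\in\mathcal U_q(R,B)\).

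For the deterministic bound, I choose oracle elements of the form \(u_S:=\sum_{j\in S}P_ju_0\) with \(S\subset J_n\). The first task is feasibility, \(\|u_S\|_\infty\le B_\star\). By Cauchy--Schwarz in \(E_j\), \(|P_ju_0(x)|\le\sqrt{K_j(x,x)}\|P_ju_0\|_2\le C_\kappa^{1/2}(1+\mu_j)^{\beta/2}\|P_ju_0\|_2\). Since \(q\ge\beta\), summing over \(j\) gives \(\|u_S\|_\infty\le C_\kappa^{1/2}\|u_0\|_{\Vspec_q}\le C_\kappa^{1/2}R\le B_\star\). This step is exactly why \(q\ge\beta\) and the choice of \(B_\star\) are assumed. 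The oracle term then splits as
\[
\|u_S-u_0\|_2^2+\sum_{j\in S}\lambda_j^2
=\sum_{j\notin J_n}\|P_ju_0\|_2^2+\sum_{j\in J_n\setminus S}\|P_ju_0\|_2^2+\sum_{j\in S}\lambda_j^2 .
\]
Choosing \(S=\{j\in J_n:\|P_ju_0\|_2^2>\lambda_j^2\}\) reduces the middle and last terms to \(\sum_{j\in J_n}\min\{\|P_ju_0\|_2^2,\lambda_j^2\}\le C_\lambda\sum_{j}\min\{\|P_ju_0\|_2^2,\delta_nd_j\}\). The tail is at most \(R^2(1+\Lambda_{\max,n})^{-q}\le\rho_n^{\log}\) by Proposition~\ref{prop:truncation} and \eqref{eq:positive-tail-choice}.

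The main step is the worst-case optimization
\[
\sup\Bigl\{\sum_{j\ge1}\min\{a_j^2(1+\mu_j)^{-q},\delta_nd_j\}:\ \sum_j a_j\le R\Bigr\}\lesssim\rho_{\alpha,\gamma,q}(R,\delta_n),
\]
where \(a_j=(1+\mu_j)^{q/2}\|P_ju_0\|_2\). This is precisely the computation underlying Theorem~\ref{thm:multiplicity-sensitive-upper}, with \(s\) replaced by \(q\) and \(A_R\) by \(R\). I would invoke that argument directly rather than redo it. For completeness the idea is as follows. Split at a level \(\Lambda\). Above \(\Lambda\), if \(q<\gamma\), each term is at most \(\min\{a_j^2(1+\mu_j)^{-q},C\delta_n(1+\mu_j)^\gamma\}\le a_j\cdot\sqrt{C\delta_n}(1+\mu_j)^{(\gamma-q)/2}\), where the weighted geometric mean is optimized at a single block and yields the first branch. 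If \(q\ge\gamma\), one bounds the blocks below \(\Lambda\) by \(\delta_n\) times the count of active dimensions, controlled via Cauchy--Schwarz against the number \(\lesssim\Lambda^\alpha\) of blocks, and the blocks above \(\Lambda\) by \(R^2\Lambda^{-q}\)-type terms, then balances to get the second branch.

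I expect this optimization, in particular getting the second-branch exponent with the \(\alpha\) counting, to be the main obstacle if it has to be redone. Otherwise it is a direct citation, and the remaining care goes into the off-event bound and into checking that \(n\) is large enough for the \(o(\cdot)\) term to be absorbed.
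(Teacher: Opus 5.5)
Your proposal is correct and follows the paper's proof essentially step for step. The steps are: the oracle inequality on $\mathcal E_{J_n}$ with hard-block comparators $u_S=\sum_{j\in S}P_ju_0$ (feasible because $q\ge\beta$ and $B_\star\ge C_\kappa^{1/2}R$); the tail bound from \eqref{eq:positive-tail-choice}; Lemma~\ref{lem:block-variation-oracle-optimization} with $s$ replaced by $q$; a deterministic $C(B_\star)$ bound on $\mathcal E_{J_n}^c$, absorbed via \eqref{eq:event-complement-rate}; and the Hellinger and $L^2$ bounds through $h^2\le\KL$ and the uniform density envelope. Rely on the citation of that lemma rather than your parenthetical sketch: in the second branch the lemma bounds the blocks above the cutoff by $\sum_{\mu_j>\Lambda}r_j\sqrt{v_j}\lesssim R\sqrt{\delta_n}(1+\Lambda)^{-(q-\gamma)/2}$, whereas an $R^2(1+\Lambda)^{-q}$-type tail combined with a dimension count below the cutoff only reproduces the dense rate of Remark~\ref{remark:comparison_dense_spectral}.
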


Note that for the feasible choice
\(B_\star\ge C_\kappa^{1/2}R\), the constant may depend on \(R\)
through \(B_\star\).

\begin{corollary}[Positive likelihood rate on homogeneous manifolds]
\label{cor:positive-homogeneous-rate}
Assume Assumption~\ref{ass:poly-growth}, suppose that \(M\) is homogeneous,
and fix \(q>0\) with \(q\geq\gamma\), together with \(R,B>0\). Choose
\[
        B_\star\geq\max\{B,C_d^{1/2}R\},
        \qquad
        \vartheta_q:=\frac{q+\alpha}{q+\gamma+2\alpha}.
\]
Let
\[
        c_0>
        \vartheta_q\left(1+\frac{\alpha+\gamma}{q}\right),
        \qquad
        L_n:=c_0\log n,
\]
and define
\begin{equation}
\label{eq:positive-homogeneous-rate}
        \rho_n^{\mathrm{hom},+}(R,B)
        :=
        R^{\frac{2(\alpha+\gamma)}{q+\gamma+2\alpha}}
        \left(\frac{e^BL_n}{n}\right)^{\vartheta_q}.
\end{equation}
Choose the cutoff
\begin{equation}
\label{eq:positive-homogeneous-cutoff}
        \Lambda_{\max,n}
        :=
        \max\left\{
        \mu_1,
        \left(\frac{R^2}{\rho_n^{\mathrm{hom},+}(R,B)}\right)^{1/q}-1
        \right\},
        \qquad
        J_n:=\{j\geq1:\mu_j\leq\Lambda_{\max,n}\},
\end{equation}
and, for \(j\in J_n\), take
\begin{equation}
\label{eq:positive-homogeneous-penalty}
        \lambda_j
        :=
        K\left[
        \sqrt{\frac{e^B(d_j+L_n)}{n}}
        +(1+e^{B/2})\frac{\sqrt{d_j}\,L_n}{n}
        \right],
\end{equation}
where \(K\) is a sufficiently large universal constant. Use
\eqref{eq:positive-sieve-estimator} with \(J=J_n\) and sieve radius
\(B_\star\). Then, for all sufficiently large \(n\),
\[
\begin{aligned}
        &\sup_{u_0\in\mathcal U_q(R,B)}
        \mathbb E_{p_{u_0}}\KL(p_{u_0},\widehat p_{J_n})
        \ \vee\
        \sup_{u_0\in\mathcal U_q(R,B)}
        \mathbb E_{p_{u_0}}h^2(\widehat p_{J_n},p_{u_0})\\
        &\qquad\vee\
        \sup_{u_0\in\mathcal U_q(R,B)}
        \mathbb E_{p_{u_0}}
        \|\widehat p_{J_n}-p_{u_0}\|_{L^2(M)}^2
        \leq C\rho_n^{\mathrm{hom},+}(R,B).
\end{aligned}
\]
Here \(C\) may depend on the fixed sieve radius \(B_\star\), the structural
constants, and \(c_0\), but not on \(n\) or \(u_0\). In particular, for fixed
\(R\) and \(B\), the rate is
\[
        R^{\frac{2(\alpha+\gamma)}{q+\gamma+2\alpha}}
        \left(\frac{e^B\log n}{n}\right)^{
        \frac{q+\alpha}{q+\gamma+2\alpha}}.
\]
\end{corollary}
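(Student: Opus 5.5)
The plan is to derive this as a specialization of Corollary~\ref{cor:positive-sieve-rate}, with $\beta=\gamma$ and $\delta_n:=e^BL_n/n$. Three inputs are needed: the diagonal growth bound, the penalty calibration \eqref{eq:lambda-delta-condition}, and the event bound \eqref{eq:event-complement-rate}. The tail condition \eqref{eq:positive-tail-choice} then holds by construction of the cutoff \eqref{eq:positive-homogeneous-cutoff}.

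\emph{Structural inputs.} On a homogeneous manifold, Corollary~\ref{cor:homogeneous-threshold} gives $\kappa_j=d_j\le C_d(1+\mu_j)^\gamma$. So the hypothesis of Corollary~\ref{cor:positive-sieve-rate} holds with $\beta=\gamma$ and $C_\kappa=C_d$, and $q\ge\gamma=\beta$ is assumed. The choice $B_\star\ge\max\{B,C_d^{1/2}R\}$ is exactly the admissible sieve radius there. For the penalties, $(\sqrt{e^B(d_j+L_n)/n})^2\le 2e^Bd_jL_n/n$ since $d_j\ge1$ and $L_n\ge1$ for large $n$. The second term satisfies $(1+e^{B/2})^2d_jL_n^2/n^2\le 4e^B d_j L_n\cdot(L_n/n)$, which is at most $C e^Bd_jL_n/n$ once $L_n\le n$. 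Hence $\lambda_j^2\le C_\lambda\delta_nd_j$ with $\delta_n=e^BL_n/n$. Then $\rho_{\alpha,\gamma,q}(R,\delta_n)$ is the second branch of \eqref{eq:multiplicity-sensitive-rate}, because $q\ge\gamma$, and this equals $\rho_n^{\mathrm{hom},+}(R,B)$ in \eqref{eq:positive-homogeneous-rate}.

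\emph{Event bound.} Under $p_{u_0}$ with $\|u_0\|_\infty\le B$ and mean zero, Jensen gives $A(u_0)\ge0$, so $p_0\le e^{2B}$. A cleaner version uses $p_0=e^{u_0-A(u_0)}$ with $|A(u_0)|\le B$. In either case $\|p_0\|_\infty\le b:=e^{2B}$. This is slightly worse than $e^B$, so I would check whether the intended envelope is $e^B$; the exponent bookkeeping only affects constants if $B$ is fixed. Apply Proposition~\ref{prop:bernstein-threshold} with $x=L_n$, whose deviation is of the same form as $\lambda_j$ once $K$ is large (note $\kappa_j=d_j$). Since $Z_j$ has the same norm as $\widehat g_j-g_j$, this gives $\mathbb P(\|Z_j\|_2>\lambda_j)\le e^{-L_n}=n^{-c_0}$. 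A union bound over $J_n$ yields $\mathbb P(\mathcal E_{J_n}^c)\le |J_n|n^{-c_0}\le D(\Lambda_{\max,n})n^{-c_0}\le C_D(1+\Lambda_{\max,n})^{\alpha+\gamma}n^{-c_0}$.

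\emph{Main obstacle: exponent comparison.} From the cutoff, $(1+\Lambda_{\max,n})\asymp(R^2/\rho_n)^{1/q}$, where $\rho_n\asymp(\log n/n)^{\vartheta_q}$. So the union bound is $\lesssim n^{\vartheta_q(\alpha+\gamma)/q-c_0}$ up to logarithms. This is $o(\rho_n)=o(n^{-\vartheta_q}\log^{\vartheta_q} n)$ exactly when $c_0>\vartheta_q(1+(\alpha+\gamma)/q)$, which is the stated condition; the strict inequality absorbs the polylogarithmic factors. With all hypotheses verified, Corollary~\ref{cor:positive-sieve-rate} delivers the stated KL, Hellinger and $L^2$ bounds for all sufficiently large $n$. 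The constants depend on $B_\star$, the structural constants, $c_0$ (through $C_\lambda$) and $K$. The final display for fixed $R,B$ follows by substituting $L_n=c_0\log n$.
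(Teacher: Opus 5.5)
Your proposal is correct and follows the paper's proof essentially step for step. You specialize Corollary~\ref{cor:positive-sieve-rate} with \(\beta=\gamma\), \(C_\kappa=C_d\) and \(\delta_n=e^BL_n/n\), certify \(\mathcal E_{J_n}\) through Proposition~\ref{prop:bernstein-threshold} with the common code \(\xi_j=L_n\) and a union bound over \(\#J_n\le C_D(1+\Lambda_{\max,n})^{\alpha+\gamma}\), and compare exponents to obtain the stated lower bound on \(c_0\).

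The one slip is the density envelope. Your Jensen step \(A(u_0)\ge0\) already gives \(p_{u_0}=e^{u_0-A(u_0)}\le e^{u_0}\le e^B\), which is the paper's \(b_0\le e^B\); the bound \(e^{2B}\) is not what it yields. This is not merely cosmetic: with \(b=e^{2B}\), the Bernstein deviation exceeds the stated \(\lambda_j\) by a factor of order \(e^{B/2}\), which a \emph{universal} \(K\) cannot absorb. Using \(b=e^B\) removes the issue.
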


The feasibility argument, penalty calibration, and the proof of
Corollary~\ref{cor:positive-homogeneous-rate}, together with its sphere and
\(SO(3)\) specializations, are collected in Appendix~\ref{app:positive-likelihood}.


\section{Experiments}
\label{sec:experiments}

In this section, we numerically illustrate the performance of the spectral
block-shrinkage estimator for density estimation on the two-dimensional
sphere $S^2$, equipped with the normalized surface measure $ d\nu=\frac{d\sigma}{4\pi}$.
We begin by describing the experimental setup. The target density is
\[
    p_0(x)
    =
    1+\sum_{\ell\in\{2,6,12\}}
    a_\ell\sqrt{2\ell+1}\,
    P_\ell(\langle x,u_\ell\rangle),
\]
where
\[
    a_2=0.15,
    \qquad
    a_6=0.085,
    \qquad
    a_{12}=0.055,
\]
and $u_\ell\in S^2$ are fixed directions. Thus, the target density has
exactly three active spectral blocks. Independent observations were
generated by exact rejection sampling, using the uniform distribution on
$S^2$ as the proposal distribution. A heatmap of $p_0$, displayed using
the Mollweide projection, is shown in
Figure~\ref{fig:experiments}\textbf{(a)}.
\begin{figure}[H]
    \centering
    \includegraphics[width=\linewidth]{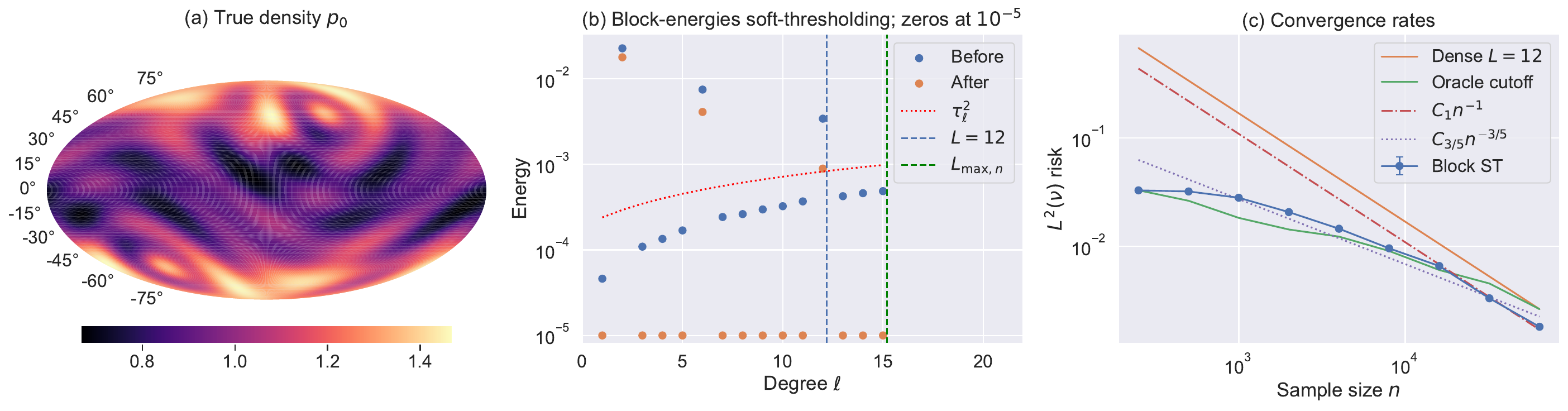}
    \caption{
        \textbf{(a)} Heatmap of the target density $p_0$ on $S^2$.
        \textbf{(b)} Energies of the empirical spectral blocks before and
        after soft thresholding for $n = 64,000$. The dashed curve represents the squared
        thresholds $\tau_\ell^2$, while the vertical dashed lines indicate
        the fixed dense cutoff $L=12$ and the sample-size-dependent maximal
        degree $L_{\max,n}$ used by the block-shrinkage estimator.
        \textbf{(c)} Convergence of the risks of the dense estimator with
        fixed cutoff $\ell=12$, the oracle dense estimator with the
        risk-minimizing cutoff $\ell_{\mathrm{or}}(n)$, which is unavailable
        in practice, and the block-shrinkage estimator, denoted by
        ``Block ST'' in the legend.
        The constants $C_1$ and $C_{3/5}$ in the reference curves
        $C_1n^{-1}$ and $C_{3/5}n^{-3/5}$, respectively, are fitted using
        the last three risk estimates of the block-shrinkage estimator.
        The curve $C_{3/5}n^{-3/5}$ represents the worst-case rate predicted
        by our bound over the class $\cP_1(R,b)$, whereas $C_1n^{-1}$
        represents the parametric benchmark attainable when the finite
        collection of active spectral blocks is known.
    }
    \label{fig:experiments}
\end{figure}

For each spherical-harmonic degree $\ell$, we computed the empirical
coefficient vector $\widehat\theta_\ell$ and applied Euclidean soft
thresholding to the entire eigenspace:
\[
    \widetilde\theta_\ell
    =
    \left(
        1-\frac{\tau_\ell}{\|\widehat\theta_\ell\|_2}
    \right)_+
    \widehat\theta_\ell.
\]

The threshold accounts for the block dimension
$d_\ell=2\ell+1$ and provides simultaneous control over all degrees in
the spectral dictionary. For $s=1$, the maximal eigenvalue
$\Lambda_{\max,n}$ was selected using the tail condition from
Theorem~\ref{thm:multiplicity-sensitive-upper}:
\[
    A_R^2(1+\Lambda_{\max,n})^{-1}
    \leq
    \rho_n,
    \qquad
    \rho_n
    =
    A_R^{4/5}
    \left(
        \frac{b\log n}{n}
    \right)^{3/5},
\]
where $A_R=R-1$ and $b\approx1.92$ is a valid upper bound on
$\|p_0\|_\infty$. The corresponding maximal spherical-harmonic degree is
\[
    L_{\max,n}
    =
    \max\bigl\{
        \ell\in\mathbb N:
        \ell(\ell+1)\leq\Lambda_{\max,n}
    \bigr\}.
\]

The thresholds were then defined by
\[
    \tau_\ell
    =
    0.9
    \left[
        \sqrt{
            \frac{b(d_\ell+\xi_n)}{n}
        }
        +
        (1+\sqrt b)
        \frac{\sqrt{d_\ell}\,\xi_n}{n}
    \right],
    \qquad
    \xi_n
    =
    \log\left(
        \frac{2L_{\max,n}}{0.05}
    \right).
\]

As a benchmark, we considered an unshrunk dense spectral estimator with
the fixed cutoff $L=12$. This estimator includes all 168 nonconstant
spherical-harmonic coefficients up to and including degree $12$,
regardless of whether the corresponding spectral blocks contain signal.
In addition, we report the risk of the dense spectral estimator with the
oracle cutoff $\ell_{\mathrm{or}}(n)$ that minimizes its expected risk.
Such a choice is infeasible in practice because it depends on the
unknown target density.

For each sample size, we used 500 independent Monte Carlo replications.
The mean $L^2(\nu)$ risk of the block-shrinkage estimator decreased from
$3.27\times10^{-2}$ at $n=250$ to $1.8\times10^{-3}$ at
$n=64{,}000$. The corresponding risks of the fixed-cutoff dense
estimator were $6.72\times10^{-1}$ and $2.62\times10^{-3}$,
respectively. The risk curves and their Monte Carlo confidence bands are
shown in Figure~\ref{fig:experiments}\textbf{(c)}.

We also studied the progressive detection of the active blocks. The
degree-$2$ block was retained with probability close to one for
$n\geq2{,}000$, the degree-$6$ block for $n\geq8{,}000$, and the
weaker degree-$12$ block for $n\geq32{,}000$. The probability of
retaining at least one inactive block remained small and did not exceed
$1.6\%$ over the considered sample-size grid.

These results support the oracle interpretation of the theoretical
bound. Unlike a dense spectral cutoff, block shrinkage does not incur
the full variance cost of estimating every eigenspace below a fixed
degree. Instead, it primarily retains the harmonic blocks whose energies
exceed the corresponding noise levels; see
Figure~\ref{fig:experiments}\textbf{(b)}. For small sample sizes, the
weaker active blocks may remain below their thresholds and are therefore
suppressed, producing an additional bias. As the sample size increases,
the thresholds decrease, the active blocks are reliably detected, and
the purely noisy degrees remain largely suppressed.

Additional sensitivity experiments are reported in Appendix~\ref{sec:additional-experiments}. Most notably, we compare block and coordinatewise soft thresholding
under rotations of the eigenbasis within a multiple eigenspace, while keeping
the target density and the observations unchanged. The coordinatewise risk
can vary by nearly an order of magnitude solely because of this arbitrary
change of basis, whereas the block estimator remains exactly invariant. We
also investigate the sensitivity to the threshold multiplier and compare the
resulting estimator with dense spectral benchmarks.

\section{Discussion and outlook}
\label{sec:discussion}

Complete Laplace eigenspaces are the canonical spectral blocks associated with
a fixed Riemannian metric. Block thresholding exploits sparsity across these
blocks while retaining the unavoidable cost of estimating an unknown direction
inside an active one. The oracle inequality and matching lower bounds identify
a single high-multiplicity eigenspace as least favorable in the rough regime
and a many-block spectral window in the smoother regime. On $S^m$, $m\geq2$,
and on $SO(3)$, summable coding attains the ideal expected-risk scale; the
positive likelihood construction transfers the same intrinsic geometry to
bounded centered log-densities while enforcing positivity and normalization.

The remaining limitations concern calibration and adaptivity. Priorities for
further work include bounded-multiplicity and nonhomogeneous manifolds,
positive-sieve or localized likelihood arguments when $q<\beta$, adaptation
to unknown smoothness, radius, and density envelope, combinations of exact
spectral invariance with spatial localization, and matching lower bounds for
the log-density classes.

\section*{Acknowledgments}
During the preparation of this manuscript, the authors used
OpenAI's ChatGPT for language editing and \LaTeX{} assistance. All AI-assisted material was
independently checked and revised by the authors, who take full
responsibility for the mathematical content, references, and
numerical results.

\bibliographystyle{plain}
\bibliography{intrinsic_spectral_block_variation_refs_revised}

@article{wainwright2008graphical,
  author  = {Wainwright, Martin J. and Jordan, Michael I.},
  title   = {Graphical Models, Exponential Families, and Variational Inference},
  journal = {Foundations and Trends in Machine Learning},
  volume  = {1},
  number  = {1--2},
  pages   = {1--305},
  year    = {2008},
  doi     = {10.1561/2200000001}
}

@article{Sogge1988SpectralClusters,
  author  = {Sogge, Christopher D.},
  title   = {Concerning the {$L^p$} norm of spectral clusters for
             second-order elliptic operators on compact manifolds},
  journal = {Journal of Functional Analysis},
  volume  = {77},
  number  = {1},
  pages   = {123--138},
  year    = {1988},
  doi     = {10.1016/0022-1236(88)90081-X}
}

@article{gine1975addition,
  author  = {Gin{\'e}, Evarist},
  title   = {The Addition Formula for the Eigenfunctions of the Laplacian},
  journal = {Advances in Mathematics},
  volume  = {18},
  number  = {1},
  pages   = {102--107},
  year    = {1975},
  doi     = {10.1016/0001-8708(75)90003-1}
}

@article{barron_sheu_1991,
  author  = {Barron, Andrew R. and Sheu, Chyong-Hwa},
  title   = {Approximation of Density Functions by Sequences of Exponential Families},
  journal = {The Annals of Statistics},
  year    = {1991},
  volume  = {19},
  number  = {3},
  pages   = {1347--1369}
}

@article{bousquet2002bennett,
  author  = {Bousquet, Olivier},
  title   = {A {Bennett} Concentration Inequality and Its Application to Suprema of Empirical Processes},
  journal = {Comptes Rendus Math\'{e}matique},
  year    = {2002},
  volume  = {334},
  number  = {6},
  pages   = {495--500}
}

@article{burq2013injections,
  author  = {Burq, Nicolas and Lebeau, Gilles},
  title   = {Injections de {Sobolev} probabilistes et applications},
  journal = {Annales scientifiques de l'\'{E}cole Normale Sup\'{e}rieure},
  year    = {2013},
  volume  = {46},
  number  = {6},
  pages   = {917--962}
}

@article{Cai1999,
  author  = {Cai, T. Tony},
  title   = {Adaptive Wavelet Estimation: A Block Thresholding and Oracle Inequality Approach},
  journal = {The Annals of Statistics},
  year    = {1999},
  volume  = {27},
  number  = {3},
  pages   = {898--924}
}

@article{cammarota_marinucci_2015,
  author  = {Cammarota, Valentina and Marinucci, Domenico},
  title   = {The Stochastic Properties of $\ell^1$-Regularized Spherical Gaussian Fields},
  journal = {Applied and Computational Harmonic Analysis},
  year    = {2015},
  volume  = {38},
  number  = {2},
  pages   = {262--283}
}

@book{chavel1984eigenvalues,
  author    = {Chavel, Isaac},
  title     = {Eigenvalues in Riemannian Geometry},
  series    = {Pure and Applied Mathematics},
  volume    = {115},
  publisher = {Academic Press},
  address   = {Orlando, FL},
  year      = {1984}
}

@article{cleanthous_et_al_2020_density,
  author  = {Cleanthous, Galatia and Georgiadis, Athanasios G. and Kerkyacharian, G\'{e}rard and Petrushev, Pencho and Picard, Dominique},
  title   = {Kernel and Wavelet Density Estimators on Manifolds and More General Metric Spaces},
  journal = {Bernoulli},
  year    = {2020},
  volume  = {26},
  number  = {3},
  pages   = {1832--1862}
}

@inproceedings{cohen_welling_2015,
  author    = {Cohen, Taco S. and Welling, Max},
  title     = {Harmonic Exponential Families on Manifolds},
  booktitle = {Proceedings of the 32nd International Conference on Machine Learning},
  editor    = {Bach, Francis and Blei, David},
  series    = {Proceedings of Machine Learning Research},
  volume    = {37},
  pages     = {1757--1765},
  publisher = {PMLR},
  year      = {2015}
}

@article{devore1998nonlinear,
  author  = {DeVore, Ronald A.},
  title   = {Nonlinear Approximation},
  journal = {Acta Numerica},
  year    = {1998},
  volume  = {7},
  pages   = {51--150}
}

@article{durastanti_2015_block,
  author  = {Durastanti, Claudio},
  title   = {Block Thresholding on the Sphere},
  journal = {Sankhya A},
  year    = {2015},
  volume  = {77},
  number  = {1},
  pages   = {153--185}
}

@book{edmunds_triebel_1996_entropy,
  author    = {Edmunds, David E. and Triebel, Hans},
  title     = {Function Spaces, Entropy Numbers, Differential Operators},
  series    = {Cambridge Tracts in Mathematics},
  volume    = {120},
  publisher = {Cambridge University Press},
  address   = {Cambridge},
  year      = {1996}
}

@article{geller_mayeli_2009_besov,
  author  = {Geller, Daryl and Mayeli, Azita},
  title   = {Besov Spaces and Frames on Compact Manifolds},
  journal = {Indiana University Mathematics Journal},
  year    = {2009},
  volume  = {58},
  number  = {5},
  pages   = {2003--2042}
}

@article{geller_mayeli_2009_frames,
  author  = {Geller, Daryl and Mayeli, Azita},
  title   = {Nearly Tight Frames and Space--Frequency Analysis on Compact Manifolds},
  journal = {Mathematische Zeitschrift},
  year    = {2009},
  volume  = {263},
  number  = {2},
  pages   = {235--264}
}

@misc{greco_marinucci_2026_sparsity,
  author       = {Greco, Giacomo and Marinucci, Domenico},
  title        = {Sparsity for Isotropic Spherical Random Fields},
  year         = {2026},
  eprint       = {2601.21535},
  archivePrefix= {arXiv:2601.21535},
  primaryClass = {math.ST}
}

@book{helgason2000groups,
  author    = {Helgason, Sigurdur},
  title     = {Groups and Geometric Analysis: Integral Geometry, Invariant Differential Operators, and Spherical Functions},
  series    = {Mathematical Surveys and Monographs},
  volume    = {83},
  publisher = {American Mathematical Society},
  address   = {Providence, RI},
  year      = {2000}
}

@article{hendriks1990nonparametric,
  author  = {Hendriks, Harrie},
  title   = {Nonparametric Estimation of a Probability Density on a {Riemannian} Manifold Using {Fourier} Expansions},
  journal = {The Annals of Statistics},
  year    = {1990},
  volume  = {18},
  number  = {2},
  pages   = {832--849},
  doi     = {10.1214/aos/1176347628}
}

@article{hormander1968spectral,
  author  = {H\"{o}rmander, Lars},
  title   = {The Spectral Function of an Elliptic Operator},
  journal = {Acta Mathematica},
  year    = {1968},
  volume  = {121},
  pages   = {193--218}
}

@article{KerkyacharianNicklPicard2012,
  author  = {Kerkyacharian, G\'{e}rard and Nickl, Richard and Picard, Dominique},
  title   = {Concentration Inequalities and Confidence Bands for Needlet Density Estimators on Compact Homogeneous Manifolds},
  journal = {Probability Theory and Related Fields},
  year    = {2012},
  volume  = {153},
  number  = {1--2},
  pages   = {363--404}
}

@article{le_gia_sloan_womersley_wang_2020,
  author  = {Le Gia, Quoc T. and Sloan, Ian H. and Womersley, Robert S. and Wang, Yu Guang},
  title   = {Isotropic Sparse Regularization for Spherical Harmonic Representations of Random Fields on the Sphere},
  journal = {Applied and Computational Harmonic Analysis},
  year    = {2020},
  volume  = {49},
  number  = {1},
  pages   = {257--278}
}

@article{li_chen_2024_group_sparse,
  author  = {Li, Chao and Chen, Xiaojun},
  title   = {Group Sparse Optimization for Inpainting of Random Fields on the Sphere},
  journal = {IMA Journal of Numerical Analysis},
  year    = {2024},
  volume  = {44},
  number  = {5},
  pages   = {3028--3058}
}

@article{liao2025spectral,
  author  = {Liao, Yulei and Ming, Pingbing},
  title   = {Spectral {Barron} Space for Deep Neural Network Approximation},
  journal = {SIAM Journal on Mathematics of Data Science},
  year    = {2025},
  volume  = {7},
  number  = {3},
  pages   = {1053--1076}
}

@inproceedings{lu2021apriori,
  author    = {Lu, Yulong and Lu, Jianfeng and Wang, Min},
  title     = {A Priori Generalization Analysis of the Deep {Ritz} Method for Solving High Dimensional Elliptic Partial Differential Equations},
  booktitle = {Proceedings of the Thirty-Fourth Conference on Learning Theory},
  editor    = {Belkin, Mikhail and Kpotufe, Samory},
  series    = {Proceedings of Machine Learning Research},
  volume    = {134},
  pages     = {3196--3241},
  publisher = {PMLR},
  year      = {2021}
}

@article{meier_vandegeer_buhlmann_2008,
  author  = {Meier, Lukas and van de Geer, Sara and B\"{u}hlmann, Peter},
  title   = {The Group Lasso for Logistic Regression},
  journal = {Journal of the Royal Statistical Society: Series B (Statistical Methodology)},
  year    = {2008},
  volume  = {70},
  number  = {1},
  pages   = {53--71}
}

@misc{mensah2025spectral,
  author       = {Mensah, Yaogan and Aremua, Isiaka},
  title        = {Spectral {Barron} Spaces of Vector-Valued Functions on Compact Groups},
  year         = {2025},
  eprint       = {2512.12382},
  archivePrefix= {arXiv:2512.12382},
  primaryClass = {math.FA}
}

@article{negahban_ravikumar_wainwright_yu_2012,
  author  = {Negahban, Sahand N. and Ravikumar, Pradeep and Wainwright, Martin J. and Yu, Bin},
  title   = {A Unified Framework for High-Dimensional Analysis of $M$-Estimators with Decomposable Regularizers},
  journal = {Statistical Science},
  year    = {2012},
  volume  = {27},
  number  = {4},
  pages   = {538--557}
}

@article{ParikhBoyd2014,
  author  = {Parikh, Neal and Boyd, Stephen},
  title   = {Proximal Algorithms},
  journal = {Foundations and Trends in Optimization},
  year    = {2014},
  volume  = {1},
  number  = {3},
  pages   = {127--239}
}

@article{pinelis1994optimum,
  author  = {Pinelis, Iosif},
  title   = {Optimum Bounds for the Distributions of Martingales in {Banach} Spaces},
  journal = {The Annals of Probability},
  year    = {1994},
  volume  = {22},
  number  = {4},
  pages   = {1679--1706}
}

@book{rosenberg1997laplacian,
  author    = {Rosenberg, Steven},
  title     = {The Laplacian on a Riemannian Manifold: An Introduction to Analysis on Manifolds},
  series    = {London Mathematical Society Student Texts},
  volume    = {31},
  publisher = {Cambridge University Press},
  address   = {Cambridge},
  year      = {1997}
}

@article{shi2010gradient,
  author  = {Shi, Yiqian and Xu, Bin},
  title   = {Gradient Estimate of an Eigenfunction on a Compact {Riemannian} Manifold without Boundary},
  journal = {Annals of Global Analysis and Geometry},
  year    = {2010},
  volume  = {38},
  number  = {1},
  pages   = {21--26}
}

@book{tsybakov2008nonparametric,
  author    = {Tsybakov, Alexandre B.},
  title     = {Introduction to Nonparametric Estimation},
  series    = {Springer Series in Statistics},
  publisher = {Springer},
  address   = {New York},
  year      = {2009}
}

@article{VanDeGeer2008,
  author  = {van de Geer, Sara A.},
  title   = {High-Dimensional Generalized Linear Models and the Lasso},
  journal = {The Annals of Statistics},
  year    = {2008},
  volume  = {36},
  number  = {2},
  pages   = {614--645}
}

@book{Vershynin2026,
  author    = {Vershynin, Roman},
  title     = {High-Dimensional Probability: An Introduction with Applications in Data Science},
  edition   = {2nd},
  series    = {Cambridge Series in Statistical and Probabilistic Mathematics},
  publisher = {Cambridge University Press},
  address   = {Cambridge},
  year      = {2026}
}

@article{yuan_lin_2006,
  author  = {Yuan, Ming and Lin, Yi},
  title   = {Model Selection and Estimation in Regression with Grouped Variables},
  journal = {Journal of the Royal Statistical Society: Series B (Statistical Methodology)},
  year    = {2006},
  volume  = {68},
  number  = {1},
  pages   = {49--67}
}

\clearpage
\section*{Appendix}
\begin{table}[!htbp]
\centering
\footnotesize
\setlength{\tabcolsep}{3.2pt}
\renewcommand{\arraystretch}{1.05}
\noindent\textbf{Notation at a glance.}\par\smallskip
\begin{tabularx}{\textwidth}{@{}>{\raggedright\arraybackslash}p{0.15\textwidth}Y@{\hspace{0.6em}}>{\raggedright\arraybackslash}p{0.15\textwidth}Y@{}}
\toprule
Notation & Meaning & Notation & Meaning \\
\midrule
\((M,g),\nu,m\)
& Manifold, normalized volume, and intrinsic dimension.
& \(L=-\Delta_M\)
& Nonnegative Laplace--Beltrami operator. \\

\(\mu_j,E_j,P_j,d_j\)
& Distinct eigenvalue, eigenspace, orthogonal projector, and multiplicity.
& \(K_j,\kappa_j\)
& Projector kernel and envelope, \(\kappa_j=\sup_xK_j(x,x)\). \\

\(E_{\leq\Lambda},\Pi_{\leq\Lambda},D(\Lambda)\)
& Low-frequency space, projector, and dimension \(D(\Lambda)=\sum_{\mu_j\leq\Lambda}d_j\).
& \(\Vspec_s(M)\)
& Spectral block-variation space with weighted \(\ell^1/\ell^2\) norm. \\

\(\Pcal_s(R,b),A_R\)
& Upper-bound density class and nonconstant budget \(A_R=R-1\).
& \(\Pcal_s(R,a,b)\)
& Two-sided density class used for lower bounds. \\

\(\alpha,\gamma\)
& \(\gamma\): individual block growth; \(\alpha+\gamma\): cumulative spectral dimension.
& \(\delta_n,L_n,\rho_{\alpha,\gamma,s}\)
& Effective squared-noise level, common coding-level inflation, and rate function. \\

\(\cJ_n,\tau_j,\eta_j\)
& Retained blocks, thresholds, and deviation remainders.
& \(A(u),p_u\)
& Log-partition functional and density \(p_u=e^{u-A(u)}\). \\

\(\Ucal_q(R,B)\)
& Centered log-density block-variation class.
& \(B_\star,b_0,\beta,\lambda_j\)
& Sieve radius, density envelope, projector-envelope exponent, and block penalties. \\

\(\mathbb P_n,\mathbb P_0\)
& Empirical and population averages in the likelihood section.
& \(\mathcal E_J\)
& Simultaneous block score-control event for the likelihood estimator. \\
\bottomrule
\end{tabularx}
\end{table}
\FloatBarrier
\begin{appendix}

\section{Proofs of structural facts}\label{app:structural-proofs}

\subsection{Proof of Proposition \ref{prop:structural}}

Let \(w_j=(1+\mu_j)^{s/2}\).  The map
\[
        f\longmapsto (w_jP_jf)_{j\geq0}
\]
is an isometric identification of \(\Vspec_s(M)\) with the
\(\ell^1\)-direct sum of the finite-dimensional Hilbert spaces \(E_j\).
Completeness and density of finite spectral sums therefore follow from the
corresponding elementary facts for \(\ell^1\).  Parseval and
\(\ell^1\subset\ell^2\) give
\[
        \|f\|_{L^2}
        \leq\sum_{j\geq0}\|P_jf\|_{L^2}
        \leq\|f\|_{\Vspec_s},
\]
and monotonicity in \(s\) follows directly from the weights.  Finally, an
isometric pullback is unitary and commutes with \(L\), hence with every
spectral projector \(P_j\); it therefore preserves each block norm and their
weighted sum.

\subsection{Proof of Proposition~\ref{prop:block-compressibility}}\label{proof_prop:block-compressibility}

Set \(a_j=(1+\mu_j)^{s/2}\|P_jf\|_{L^2(M)}\), and let \(S_K\) contain the
indices of the \(K\) largest \(a_j\)'s.  Stechkin's best-term inequality
\cite{devore1998nonlinear} gives
\[
        \left(\sum_{j\notin S_K}a_j^2\right)^{1/2}
        \leq \frac{\sum_{j\geq0}a_j}{\sqrt{K+1}}.
\]
Orthogonality identifies the left-hand side with
\(\|f-\sum_{j\in S_K}P_jf\|_{H^s(M)}\), while the numerator is
\(\|f\|_{\Vspec_s(M)}\).  The \(L^2\) statement follows from
\(\|g\|_{L^2}\leq\|g\|_{H^s}\).

\subsection{Proof of Proposition~\ref{prop:atomic-basis-envelope}}

Write \(w_j=(1+\mu_j)^{s/2}\). Suppose first that
\(\|f\|_{\Vspec_s(M)}\leq1\). For every \(j\) with \(P_jf\neq0\), set
\[
        c_j:=w_j\|P_jf\|_{L^2(M)},
        \qquad
        a_j:=w_j^{-1}\frac{P_jf}{\|P_jf\|_{L^2(M)}}\in\Acal_s.
\]
Then
\[
        f=\sum_{j:P_jf\neq0}c_ja_j
        \quad\text{in }L^2(M),
        \qquad
        \sum_jc_j=\|f\|_{\Vspec_s(M)}\leq1.
\]
Thus the unit ball is contained in the \(L^2\)-closed absolutely convex hull
of \(\Acal_s\). Conversely, every finite sum
\(g=\sum_{r=1}^Nc_ra_r\), with \(a_r\in\Acal_s\) and
\(\sum_r|c_r|\leq1\), satisfies
\[
        \|g\|_{\Vspec_s(M)}
        \leq
        \sum_{r=1}^N|c_r|\|a_r\|_{\Vspec_s(M)}
        \leq1.
\]
The block-variation norm is lower semicontinuous for \(L^2\)-convergence:
for each fixed \(J\), the finite partial sum
\(\sum_{j=0}^Jw_j\|P_jf\|_2\) is continuous in \(L^2\), and taking the
supremum over \(J\) gives lower semicontinuity. Hence the \(L^2\)-closure of
the absolutely convex hull is also contained in the unit ball.

For the basis-envelope identity, let
\(\Bcal=\{\phi_{j,\ell}\}\in\mathfrak E(L)\). In each block,
\[
        \sum_{\ell=1}^{d_j}
        |\langle f,\phi_{j,\ell}\rangle_{L^2(M)}|
        \geq
        \left(
        \sum_{\ell=1}^{d_j}
        |\langle f,\phi_{j,\ell}\rangle_{L^2(M)}|^2
        \right)^{1/2}
        =\|P_jf\|_{L^2(M)}.
\]
After multiplication by \(w_j\) and summation over \(j\), this gives
\(\|f\|_{\mathrm{SB},s;\Bcal}\geq\|f\|_{\Vspec_s(M)}\). For every block
with \(P_jf\neq0\), choose the first basis vector to be
\(P_jf/\|P_jf\|_2\) and complete it to an orthonormal basis of \(E_j\); in a
zero block choose any orthonormal basis. In the resulting eigenbasis, only the
first coordinate in each nonzero block is nonzero, so equality holds. This
proves both attainment and \eqref{eq:basis-envelope-characterization}.

\subsection{Proof of Proposition \ref{prop:besov-embedding}}

Use the compact-manifold Littlewood--Paley characterization from
\cite{geller_mayeli_2009_besov}.  If
\(Q_n=\psi_n((I+L)^{1/2})\) is a smooth dyadic spectral multiplier, then
\[
        Q_nf=\sum_{j\geq0}\psi_n((1+\mu_j)^{1/2})P_jf
\]
and hence
\[
        \|Q_nf\|_{L^2}
        \leq
        \sum_{j\geq0}
        \big|\psi_n((1+\mu_j)^{1/2})\big|\,\|P_jf\|_{L^2}.
\]
Tonelli's theorem and finite overlap of the dyadic supports yield
\[
\begin{aligned}
        \|f\|_{B^s_{2,1}(M)}
        &\lesssim
        \sum_{j\geq0}
        \left[\sum_{n\geq0}2^{ns}
        \big|\psi_n((1+\mu_j)^{1/2})\big|\right]
        \|P_jf\|_{L^2}\\
        &\lesssim
        \sum_{j\geq0}(1+\mu_j)^{s/2}\|P_jf\|_{L^2}
        =\|f\|_{\Vspec_s(M)}.
\end{aligned}
\]
The remaining embedding
\(B^s_{2,1}(M)\hookrightarrow B^s_{2,2}(M)=H^s(M)\) is the standard
\(\ell^1\hookrightarrow\ell^2\) embedding in the same Littlewood--Paley
characterization \cite{geller_mayeli_2009_besov}.
\subsection{Strictness of the Besov embedding}
\label{sec:strict-besov}

Set
\[
a_j := (1+\mu_j)^{1/2}.
\]
The following proposition gives a sufficient condition under which the
embedding
\[
V_s^{\mathrm{spec}}(M)\hookrightarrow B^s_{2,1}(M)
\]
is strict.

\begin{proposition}[A spectral-shell criterion for strict inclusion]
\label{prop:strict-besov}
Let \(s\geq 0\). Suppose that there exist constants
\(0<c_-<c_+<\infty\), an integer \(k_0\geq 1\), and pairwise disjoint
finite sets of eigenspace indices
\[
\mathcal J_k\subset \{1,2,\ldots\},
\qquad k\geq k_0,
\]
such that
\[
c_-2^k
\leq
a_j
\leq
c_+2^k,
\qquad
j\in\mathcal J_k.
\tag{A.1}
\label{eq:strict-shell-localization}
\]
Write
\[
K_k:=\#\mathcal J_k.
\]
If
\[
\sum_{k\geq k_0}
\frac{\sqrt{K_k}}{(1+k)^2}
=
\infty,
\tag{A.2}
\label{eq:strict-shell-growth}
\]
then
\[
V_s^{\mathrm{spec}}(M)
\subsetneq
B^s_{2,1}(M).
\]

Condition \eqref{eq:strict-shell-growth} holds, in particular, on every
round sphere \(S^m\), \(m\geq 1\), and every standard flat torus
\(T^d=(\mathbb R/2\pi\mathbb Z)^d\), \(d\geq 1\).
\end{proposition}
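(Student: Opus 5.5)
We will construct an explicit function \(f\in B^s_{2,1}(M)\) with \(\|f\|_{\Vspec_s(M)}=\infty\). The function will spread its energy evenly over the blocks of each shell \(\mathcal J_k\). For \(k\geq k_0\), pick unit vectors \(u_j\in E_j\) for \(j\in\mathcal J_k\), and set
\[
f=\sum_{k\geq k_0}\varepsilon_k\,2^{-ks}\sum_{j\in\mathcal J_k}\frac{u_j}{\sqrt{K_k}},
\qquad \varepsilon_k:=(1+k)^{-2}.
\]
Since the \(\mathcal J_k\) are pairwise disjoint, every block receives at most one contribution. Hence \(\|P_jf\|_{L^2}=\varepsilon_k2^{-ks}K_k^{-1/2}\) for \(j\in\mathcal J_k\), and \(P_jf=0\) otherwise.

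\textbf{The \(\Vspec_s\) norm diverges.} By \eqref{eq:strict-shell-localization}, \((1+\mu_j)^{s/2}=a_j^s\geq c_-^s2^{ks}\) on \(\mathcal J_k\). Therefore
\[
\|f\|_{\Vspec_s}\geq c_-^s\sum_k \varepsilon_k K_k\,K_k^{-1/2}=c_-^s\sum_k\frac{\sqrt{K_k}}{(1+k)^2}=\infty
\]
by \eqref{eq:strict-shell-growth}. We must also check \(f\in L^2\), i.e. \(\sum_k\varepsilon_k^22^{-2ks}<\infty\), which is clear.

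\textbf{The Besov norm is finite.} We use the Littlewood--Paley characterization from the proof of Proposition~\ref{prop:besov-embedding}. The multiplier \(Q_n\) is supported where \(a_j\asymp 2^n\). By \eqref{eq:strict-shell-localization}, only shells with \(|k-n|\leq N_0\) can meet this support, where \(N_0\) depends only on \(c_\pm\) and the dyadic partition. Within a shell the blocks are orthogonal and \(|\psi_n|\lesssim1\), so
\[
\|Q_nf\|_{L^2}\lesssim\sum_{|k-n|\leq N_0}\varepsilon_k2^{-ks}\Bigl(\sum_{j\in\mathcal J_k}K_k^{-1}\Bigr)^{1/2}=\sum_{|k-n|\leq N_0}\varepsilon_k2^{-ks}.
\]
Hence \(\|f\|_{B^s_{2,1}}\lesssim\sum_n2^{ns}\sum_{|k-n|\le N_0}\varepsilon_k2^{-ks}\lesssim_{N_0}\sum_k\varepsilon_k<\infty\). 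The low-frequency piece is trivially finite. This is the heart of the argument: within a dyadic band the \(\ell^2\) aggregation gains exactly the factor \(\sqrt{K_k}\) that the \(\ell^1\) block sum loses. The only care needed is the bounded overlap of shells with bands, which is precisely what \eqref{eq:strict-shell-localization} provides.

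\textbf{Verifying the criterion on spheres and tori.} This is the main step to check.

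On \(S^m\), take \(\mathcal J_k=\{\ell:2^k\leq\ell<2^{k+1}\}\). Since \(a_\ell\asymp\ell\), this gives \(K_k=2^k\), so \(\sum_k 2^{k/2}(1+k)^{-2}=\infty\). This also covers \(m=1\), where the blocks are \(\{n,-n\}\).

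On \(T^d\), the distinct eigenvalues are the integers \(r=|k|^2\) representable as sums of \(d\) squares. For \(d=1\), take \(r=n^2\) with \(n\in[2^k,2^{k+1})\); then \(a_r\asymp n\asymp 2^k\) and \(K_k\asymp2^k\). For \(d\geq2\), the values \(n^2\) are still eigenvalues, realized by \(k=(n,0,\ldots,0)\), so the same choice gives \(K_k\gtrsim2^k\). The problem is that the sets \(\{n^2\}\) are distinct values, so they are disjoint shells, but in each case one must ensure that \(\mathcal J_k\) collects distinct eigenspace indices. Choosing one eigenvalue per \(n\) in this way, the series diverges in all cases.
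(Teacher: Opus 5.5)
Your proposal is correct and follows the paper's proof essentially verbatim. It uses the same coefficients $2^{-ks}(1+k)^{-2}K_k^{-1/2}$ spread evenly over each shell, the same divergence argument for the block-variation norm, and the same finite-overlap Littlewood--Paley bound for the $B^s_{2,1}$ norm. You apply the triangle inequality across overlapping shells where the paper uses orthogonality, which is immaterial. You also make the same dyadic choices of degrees on $S^m$ and of eigenvalues $r^2$ on $T^d$; your torus paragraph is worded confusingly, but the fact it needs — that distinct $n$ give distinct eigenvalues $n^2$, hence distinct eigenspace indices and disjoint shells — is exactly what the paper uses.
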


\begin{proof}
For every \(k\geq k_0\) and \(j\in\mathcal J_k\), choose a unit vector
\[
u_j\in E_j,
\qquad
\|u_j\|_{L^2(M)}=1.
\]
Define
\[
c_k
:=
\frac{2^{-ks}}{(1+k)^2\sqrt{K_k}}
\]
and consider the orthogonal series
\[
f
:=
\sum_{k\geq k_0}
c_k
\sum_{j\in\mathcal J_k}u_j.
\tag{A.3}
\label{eq:strict-besov-construction}
\]

Since distinct Laplace eigenspaces are mutually orthogonal,
\[
\|f\|_{L^2(M)}^2
=
\sum_{k\geq k_0}K_kc_k^2
=
\sum_{k\geq k_0}
\frac{2^{-2ks}}{(1+k)^4}
<
\infty.
\]
Hence the series in \eqref{eq:strict-besov-construction} converges in
\(L^2(M)\).

We first show that \(f\notin V_s^{\mathrm{spec}}(M)\). For
\(j\in\mathcal J_k\), one has
\[
P_jf=c_ku_j,
\qquad
\|P_jf\|_{L^2(M)}=c_k.
\]
Using the lower bound in
\eqref{eq:strict-shell-localization}, we obtain
\begin{align*}
\|f\|_{V_s^{\mathrm{spec}}(M)}
&=
\sum_{j\geq 0}
a_j^s\|P_jf\|_{L^2(M)}
\\
&\geq
\sum_{k\geq k_0}
\sum_{j\in\mathcal J_k}
(c_-2^k)^s c_k
\\
&=
c_-^s
\sum_{k\geq k_0}
2^{ks}K_kc_k
\\
&=
c_-^s
\sum_{k\geq k_0}
\frac{\sqrt{K_k}}{(1+k)^2}.
\end{align*}
The last series diverges by
\eqref{eq:strict-shell-growth}. Therefore,
\[
f\notin V_s^{\mathrm{spec}}(M).
\]

It remains to prove that \(f\in B^s_{2,1}(M)\). Let
\[
A=(I+L)^{1/2},
\]
and use the dyadic decomposition from the proof of
Proposition~\ref{prop:besov-embedding}:
\[
Q_0=\psi_0(A),
\qquad
Q_n=\psi(2^{-n}A),
\quad n\geq 1.
\]
Because the multiplier \(\psi\) is compactly supported and the
frequencies in \(\mathcal J_k\) satisfy
\eqref{eq:strict-shell-localization}, there exists an integer
\(N_0\geq 1\), depending only on \(c_-\), \(c_+\), and the support of
\(\psi\), such that
\[
Q_nu_j=0
\qquad
\text{whenever }
j\in\mathcal J_k
\text{ and }
|n-k|>N_0.
\tag{A.4}
\label{eq:strict-finite-overlap}
\]
Moreover,
\[
Q_nu_j=\psi(2^{-n}a_j)u_j.
\]
By orthogonality and the uniform boundedness of the multipliers,
\begin{align*}
\|Q_nf\|_{L^2(M)}^2
&=
\sum_{\substack{k\geq k_0\\ |k-n|\leq N_0}}
c_k^2
\sum_{j\in\mathcal J_k}
\left|\psi(2^{-n}a_j)\right|^2
\\
&\leq
C
\sum_{\substack{k\geq k_0\\ |k-n|\leq N_0}}
K_kc_k^2.
\end{align*}
Since \(|k-n|\leq N_0\) implies \(2^{ns}\leq C2^{ks}\), it follows
that
\begin{align*}
2^{ns}\|Q_nf\|_{L^2(M)}
&\leq
C
\left(
\sum_{\substack{k\geq k_0\\ |k-n|\leq N_0}}
2^{2ks}K_kc_k^2
\right)^{1/2}
\\
&=
C
\left(
\sum_{\substack{k\geq k_0\\ |k-n|\leq N_0}}
\frac{1}{(1+k)^4}
\right)^{1/2}
\\
&\leq
C
\sum_{\substack{k\geq k_0\\ |k-n|\leq N_0}}
\frac{1}{(1+k)^2}.
\end{align*}
Summing over \(n\) and using the finite-overlap property
\eqref{eq:strict-finite-overlap} gives
\[
\sum_{n\geq 1}
2^{ns}\|Q_nf\|_{L^2(M)}
\leq
C
\sum_{k\geq k_0}
\frac{1}{(1+k)^2}
<
\infty.
\]
The low-frequency term \(\|Q_0f\|_{L^2(M)}\) is finite because
\(f\in L^2(M)\). Thus
\[
f\in B^s_{2,1}(M).
\]
Together with \(f\notin V_s^{\mathrm{spec}}(M)\), this proves the
strict inclusion.

We finally verify the shell condition in the stated examples.

On the round sphere \(S^m\), the distinct eigenspaces are indexed by
the harmonic degree \(\ell\), with
\[
\mu_\ell=\ell(\ell+m-1),
\qquad
(1+\mu_\ell)^{1/2}\asymp 1+\ell.
\]
For each sufficiently large \(k\), choose the degrees
\[
2^k\leq \ell<2^{k+1}.
\]
They determine \(K_k\asymp 2^k\) distinct eigenspaces satisfying
\eqref{eq:strict-shell-localization}. Hence
\[
\sum_k
\frac{\sqrt{K_k}}{(1+k)^2}
\gtrsim
\sum_k
\frac{2^{k/2}}{(1+k)^2}
=
\infty.
\]

On the standard flat torus \(T^d\), the eigenvalues are
\[
|q|^2,
\qquad
q\in\mathbb Z^d.
\]
For every positive integer \(r\), the lattice vector
\[
(r,0,\ldots,0)
\]
produces the eigenvalue \(r^2\). Choose the distinct eigenspaces
corresponding to
\[
2^k\leq r<2^{k+1}.
\]
Again,
\[
(1+r^2)^{1/2}\asymp 2^k
\]
and \(K_k\asymp 2^k\), so
\eqref{eq:strict-shell-growth} follows in exactly the same way.
\end{proof}

\begin{remark}
The sequence \((1+k)^{-2}\) has no special significance. More
generally, the same argument applies whenever there exists a positive
sequence \((b_k)\in\ell^1\) such that
\[
\sum_k b_k\sqrt{K_k}=\infty.
\]
One then replaces \(c_k\) in
\eqref{eq:strict-besov-construction} by
\[
c_k=\frac{2^{-ks}b_k}{\sqrt{K_k}}.
\]
The Besov contribution of shell \(k\) is of order \(b_k\), whereas its
block-variation contribution is of order \(b_k\sqrt{K_k}\).
\end{remark}

\subsection{Proof of Proposition \ref{prop:truncation}}

By orthogonality and the triangle inequality,
\[
\begin{aligned}
        \|f-\Pi_{\leq\Lambda}f\|_{L^2}
        &\leq \sum_{\mu_j>\Lambda}\|P_jf\|_{L^2}\\
        &\leq (1+\Lambda)^{-s/2}
        \sum_{\mu_j>\Lambda}(1+\mu_j)^{s/2}\|P_jf\|_{L^2}\\
        &\leq (1+\Lambda)^{-s/2}\|f\|_{\Vspec_s}.
\end{aligned}
\]
The remaining assertions follow from the definition of \(\Lambda_\eps\) and
Weyl's bound \eqref{eq:weyl}.

\section{Approximation, kernels, needlets, and entropy}\label{app:approx-entropy}

\subsection{Projection kernels}

Let \(K_j\) be the integral kernel of \(P_j\).  For an orthonormal basis \(\{\varphi_{j,\ell}\}_{\ell=1}^{d_j}\) of \(E_j\),
\[
K_j(x,y)=\sum_{\ell=1}^{d_j}\varphi_{j,\ell}(x)\varphi_{j,\ell}(y),
\]
with complex conjugation inserted in the complex case.  The kernel is independent of the basis.  The low-frequency projection kernel is
\[
K_{\le\Lambda}(x,y)=\sum_{\mu_j\le\Lambda}K_j(x,y).
\]
Then
\[
\Pi_{\le\Lambda}f(x)=\int_MK_{\le\Lambda}(x,y)f(y)\,\dd\nu(y).
\]

\begin{proposition}[Projection-kernel realization]\label{prop:kernel-realization}
For every \(\Lambda\ge0\), there are \(D(\Lambda)\) points \(z_i\in M\)
such that every \(g\in E_{\le\Lambda}\) can be written as
\[
g(x)=\sum_{i=1}^{D(\Lambda)}a_iK_{\le\Lambda}(x,z_i).
\]
Consequently, if \(f\in\Vspec_s(R)\), then for every \(0<\eps<R\) there is such a kernel expansion \(g_\eps\) with
\[
\norm{f-g_\eps}_{L^2(M)}\le\eps,
\qquad
D(\Lambda_\eps)\le C_W(R/\eps)^{m/s}.
\]
\end{proposition}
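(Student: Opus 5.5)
The plan has two steps: first show the realization in the finite-dimensional space \(E_{\le\Lambda}\), then combine it with Proposition~\ref{prop:truncation}. Write \(N=D(\Lambda)\) and fix an orthonormal basis \(\{\varphi_1,\ldots,\varphi_N\}\) of \(E_{\le\Lambda}\), built from bases of the eigenspaces \(E_j\) with \(\mu_j\le\Lambda\). Then
\[
K_{\le\Lambda}(x,y)=\sum_{k=1}^N\varphi_k(x)\varphi_k(y)=\Phi(x)^\top\Phi(y),
\qquad
\Phi=(\varphi_1,\ldots,\varphi_N).
\]
For points \(z_1,\ldots,z_N\), form the \(N\times N\) matrix \(V\) with columns \(\Phi(z_i)\). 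Every \(g\in E_{\le\Lambda}\) has the form \(g=\Phi^\top c\) with \(c=(\langle g,\varphi_k\rangle)_k\). The function \(\sum_i a_iK_{\le\Lambda}(\cdot,z_i)\) equals \(\Phi^\top Va\). So the representation holds for every \(g\) exactly when we can solve \(Va=c\) for every \(c\), that is, exactly when \(V\) is invertible.

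The key step is choosing points that make \(V\) nonsingular; this is a unisolvence argument. The functions \(\varphi_k\) are continuous, in fact smooth since they are eigenfunctions, and linearly independent in \(L^2(M)\). They are therefore linearly independent as functions on \(M\), and the vectors \(\{\Phi(x):x\in M\}\) span \(\R^N\). Indeed, if some \(c\neq0\) had \(c^\top\Phi(x)=0\) for all \(x\), then the function \(\sum_k c_k\varphi_k\) would vanish identically, which contradicts independence. Since the vectors \(\Phi(x)\) span \(\R^N\), we can extract \(N\) of them, \(\Phi(z_1),\ldots,\Phi(z_N)\), that form a basis, and then \(V\) is invertible. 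Alternatively, the points can be chosen greedily, or by maximizing \(|\det V|\) over the compact set \(M^N\); both avoid any selection issue. The main subtlety is this independence-to-point-evaluation step. It is elementary but needs continuity, so that \(L^2\)-independence is not lost on null sets, and elliptic regularity supplies that continuity.

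For the consequence, let \(f\in\Vspec_s(R)\) and \(0<\eps<R\). Take \(\Lambda_\eps=(R/\eps)^{2/s}-1\), which is nonnegative because \(\eps<R\), and set \(g_\eps=\Pi_{\le\Lambda_\eps}f\in E_{\le\Lambda_\eps}\). By Proposition~\ref{prop:truncation}, \(\norm{f-g_\eps}_{L^2(M)}\le R(1+\Lambda_\eps)^{-s/2}=\eps\). By the first part, \(g_\eps\) has a kernel expansion with \(D(\Lambda_\eps)\) centers. By Weyl's bound \eqref{eq:weyl}, \(D(\Lambda_\eps)\le C_W(1+\Lambda_\eps)^{m/2}=C_W(R/\eps)^{m/s}\), which gives the stated count.
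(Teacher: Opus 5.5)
Your proposal is correct and is essentially the paper's proof written in coordinates. The paper shows that the sections $K_{\le\Lambda}(\cdot,z)$, $z\in M$, span $E_{\le\Lambda}$: any $h\in E_{\le\Lambda}$ orthogonal to all of them satisfies $h(z)=\langle h,K_{\le\Lambda}(\cdot,z)\rangle=0$ for every $z$, which is precisely your spanning argument for the vectors $\Phi(z)$ (the coordinate vectors of these sections); it then extracts a basis and invokes Proposition~\ref{prop:truncation} and Weyl's bound exactly as you do.

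One small correction: you do not need continuity to pass from ``vanishing at every point'' to $c=0$, since that holds for any representatives. Continuity is needed only so that point evaluations and the pointwise identity $g(x)=\sum_i a_iK_{\le\Lambda}(x,z_i)$ are meaningful.
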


\begin{proof}
Let \(E=E_{\le\Lambda}\).  For \(z\in M\), write \(K_z(x)=K_{\le\Lambda}(x,z)\).  The family \(\{K_z:z\in M\}\) spans \(E\).  Otherwise, a nonzero \(h\in E\) would be orthogonal to each \(K_z\), hence \(h(z)=0\) for all \(z\in M\), contradicting \(h\ne0\).  Since \(\dim E=D(\Lambda)\), one may choose \(D(\Lambda)\) such kernels forming a basis.  The approximation statement follows by taking \(g_\eps=\Pi_{\le\Lambda_\eps}f\) and applying Proposition~\ref{prop:truncation}.
\end{proof}

\subsection{Needlet control}

Spectral frames and their Besov coefficient characterizations on compact
manifolds are developed in~\cite{geller_mayeli_2009_besov,geller_mayeli_2009_frames}.
Let
$\{\psi_{n,\xi}\}_{n\geq 0,\,\xi\in\mathcal X_n}$ and
$\{\widetilde{\psi}_{n,\xi}\}_{n\geq 0,\,\xi\in\mathcal X_n}$
be a dual pair of needlet frames. The standard characterization gives
\[
\sum_{n\geq 0} 2^{ns}
\left(
  \sum_{\xi\in\mathcal X_n}
  |\langle f,\psi_{n,\xi}\rangle_{L^2(M)}|^2
\right)^{1/2}
\lesssim
\|f\|_{B^s_{2,1}(M)}
\lesssim
\|f\|_{V^{\mathrm{spec}}_s(M)}.
\]
For $J\geq 0$, define the partial needlet reconstruction through level
$J$ by
\[
S_J f
:=
\sum_{n=0}^{J}\sum_{\xi\in\mathcal X_n}
\langle f,\psi_{n,\xi}\rangle_{L^2(M)}
\,\widetilde{\psi}_{n,\xi}.
\]
In the Parseval-frame case one may take
$\widetilde{\psi}_{n,\xi}=\psi_{n,\xi}$.
The usual needlet reconstruction estimate therefore yields
\[
\|f-S_Jf\|_{L^2(M)}
\lesssim
2^{-Js}\|f\|_{V^{\mathrm{spec}}_s(M)}.
\]

If \(\#X_n\lesssim2^{nm}\), the reconstruction through level \(J\) uses
\(O(2^{Jm})\) atoms; hence \(\Vspec_s(R)\) admits localized approximations
with error \(\eps\) using \(O((R/\eps)^{m/s})\) atoms.

\subsection{Metric entropy and compactness}

\begin{proposition}[Sobolev-based entropy upper bound]\label{prop:entropy}
Let \(s>0\) and \(R>0\).  There exists \(C=C(M,g,s)>0\) such that, for every \(0<\eps<R\),
\[
\log N\left(\eps,\Vspec_s(R),\norm{\cdot}_{L^2(M)}\right)
\le
C(R/\eps)^{m/s}.
\]
Consequently, \(\Vspec_s(R)\) is compact in \(L^2(M)\).
\end{proposition}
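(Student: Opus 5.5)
The plan is to reduce to the Sobolev ball via the embedding $\|f\|_{H^s(M)}\le\|f\|_{\Vspec_s(M)}$ already recorded before Proposition~\ref{prop:besov-embedding}, and then run the standard covering argument for spectral ellipsoids. Since $\Vspec_s(R)$ is contained in the ball $\{f:\|f\|_{H^s(M)}\le R\}$, it suffices to bound the $L^2$-covering number of this $H^s$-ball. Covering numbers of a subset are controlled by those of the superset, up to replacing $\eps$ by $\eps/2$ if centers must lie in the set, and this changes only the constant.

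The cover is built in two pieces. Fix $0<\eps<R$ and choose $\Lambda=\Lambda_{\eps/2}=(2R/\eps)^{2/s}-1$, exactly as in Proposition~\ref{prop:truncation}. For $f$ in the $H^s$-ball the same computation gives the tail bound
\[
\|f-\Pi_{\le\Lambda}f\|_{L^2}^2\le(1+\Lambda)^{-s}R^2\le\eps^2/4 .
\]
The Sobolev version is in fact cleaner, since it uses orthogonality directly. The finite-dimensional part $\Pi_{\le\Lambda}f$ lies in the Euclidean ball of radius $R$ in $E_{\le\Lambda}$, which has dimension $D(\Lambda)$. The volumetric bound supplies an $\eps/2$-net of cardinality at most $(1+4R/\eps)^{D(\Lambda)}$. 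Combining the two pieces yields an $\eps$-net of the whole ball, so
\[
\log N\le D(\Lambda)\log(1+4R/\eps)\le C_W(2R/\eps)^{m/s}\log(1+4R/\eps),
\]
using Weyl's law \eqref{eq:weyl}.

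The main obstacle is removing the logarithmic factor to obtain the clean $(R/\eps)^{m/s}$. My first approach is a dyadic refinement. Split the spectrum into shells $2^{k}\le(1+\mu_j)^{1/2}<2^{k+1}$, with $k\le k_\eps$. On shell $k$ the $H^s$ constraint gives radius at most $R2^{-ks}$, and the dimension is $\lesssim 2^{km}$ by Weyl. Cover shell $k$ to accuracy $\eps_k=\eps\,2^{-(k_\eps-k)\theta}$ for a small $\theta>0$, chosen so that $\sum_k\eps_k^2\lesssim\eps^2$. The shell-$k$ log-covering cost is then
\[
2^{km}\log\bigl(1+R2^{-ks}/\eps_k\bigr)\lesssim 2^{km}(k_\eps-k)(s-\theta)+O(2^{km}),
\]
because $R2^{-k_\eps s}\asymp\eps$. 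Summing this geometric-times-linear series is dominated by the top shells and gives $\lesssim 2^{k_\eps m}\asymp(R/\eps)^{m/s}$. This is the standard way to get the sharp entropy of Sobolev ellipsoids, and equivalently recovers the known $\eps^{-m/s}$ entropy of $H^s$-balls in $L^2$ from \cite{edmunds_triebel_1996_entropy}, which I would cite as a fallback.

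Compactness then follows immediately. The set $\Vspec_s(R)$ is $L^2$-closed by the lower semicontinuity of the norm shown in the proof of Proposition~\ref{prop:atomic-basis-envelope}. It is totally bounded by the entropy bound. A closed, totally bounded subset of the complete space $L^2(M)$ is compact.
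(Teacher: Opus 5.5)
Your proposal is correct and follows the paper's route. The paper likewise embeds $\Vspec_s(R)$ into the $H^s$-ball of radius $R$, cites the Edmunds--Triebel entropy estimate for $H^s(M)\hookrightarrow L^2(M)$, and obtains compactness from $L^2$-closedness (lower semicontinuity) plus total boundedness; your dyadic-shell covering via Weyl's law is a valid self-contained proof of that cited estimate. One small slip: with $\eps_k=\eps\,2^{-(k_\eps-k)\theta}$ and $R2^{-k_\eps s}\asymp\eps$ one gets $R2^{-ks}/\eps_k\asymp 2^{(k_\eps-k)(s+\theta)}$, not $2^{(k_\eps-k)(s-\theta)}$. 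This is harmless, since the shell cost is still linear in $k_\eps-k$ and $\sum_{k\le k_\eps}2^{km}(k_\eps-k)=O(2^{k_\eps m})$.
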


\begin{proof}
The inclusion \(\Vspec_s(R)\subset H^s(R)\), together with the standard
entropy estimate for the compact Sobolev embedding
\(H^s(M)\hookrightarrow L^2(M)\), gives the displayed bound; see
\cite{edmunds_triebel_1996_entropy}.  Closedness follows from lower
semicontinuity of the block norm: if \(f_n\to f\) in \(L^2\), then
\(P_jf_n\to P_jf\) for each fixed \(j\), and the finite partial sums pass to
the limit.  Thus the ball is closed and totally bounded in \(L^2(M)\), hence
compact.
\end{proof}

\begin{remark}[Sharpness]
The entropy bound is obtained by embedding into the larger Sobolev ball.  It is useful as a compactness and complexity estimate, but it is not a sharp analysis of the weighted block \(\ell^1/\ell^2\) block-variation ball.  The minimax theory below does not rely on this Sobolev entropy bound as its main input.
\end{remark}

\begin{proposition}[Failure of compactness for \(s=0\)]\label{prop:noncompact-s0}
The ball \(\Vspec_0(R)\) is not totally bounded in \(L^2(M)\) for \(R>0\).
\end{proposition}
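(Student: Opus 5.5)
To prove Proposition~\ref{prop:noncompact-s0}, I will exhibit an infinite \(L^2\)-separated family inside \(\Vspec_0(R)\). For \(s=0\) the weights \((1+\mu_j)^{s/2}\) all equal one, so \(\|f\|_{\Vspec_0(M)}=\sum_{j\geq0}\|P_jf\|_{L^2(M)}\). In particular, a function lying in a single eigenspace has block-variation norm equal to its \(L^2\)-norm. The ball therefore contains normalized eigenfunctions of arbitrarily high frequency, and these are mutually orthogonal.

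Concretely, since the spectrum of \(L\) on a compact manifold is infinite (because \(L^2(M)\) is infinite-dimensional and each \(E_j\) is finite-dimensional), for each \(j\geq1\) I choose a unit vector \(u_j\in E_j\) and set \(f_j:=R\,u_j\). Then \(P_kf_j=\delta_{jk}Ru_j\), so
\[
\|f_j\|_{\Vspec_0(M)}=R,
\qquad\text{hence } f_j\in\Vspec_0(R).
\]
By the orthogonality of distinct eigenspaces,
\[
\|f_j-f_k\|_{L^2(M)}^2=\|f_j\|_{L^2}^2+\|f_k\|_{L^2}^2=2R^2,
\qquad j\neq k.
\]

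Now take \(\eps<R/\sqrt2\). Any \(L^2\)-ball of radius \(\eps\) contains at most one \(f_j\): if it contained two, the triangle inequality would give \(\|f_j-f_k\|\leq2\eps<\sqrt2R\), a contradiction. So no finite \(\eps\)-net exists, and \(\Vspec_0(R)\) is not totally bounded. Equivalently, \((f_j)\) has no Cauchy subsequence.

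There is no real obstacle. The only point to check is that infinitely many distinct eigenvalues exist, which follows from the discreteness and finite multiplicity recalled in the spectral preliminaries. The contrast with Proposition~\ref{prop:entropy} is worth noting: there, the decaying factor \((1+\mu_j)^{-s/2}\) makes these same atoms shrink, which is exactly what fails when \(s=0\).
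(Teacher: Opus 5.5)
Your proposal is correct and is essentially the paper's proof: both take $f_j=Ru_j$ with unit vectors $u_j\in E_j$, note $\|f_j\|_{\Vspec_0(M)}=R$, and use orthogonality of distinct eigenspaces to get the infinite $\sqrt2R$-separated family. You additionally spell out why no finite $\eps$-net exists for $\eps<R/\sqrt2$ and why infinitely many eigenspaces exist, which the paper leaves implicit.
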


\begin{proof}
Choose unit vectors \(u_j\in E_j\).  Then \(f_j=Ru_j\) belongs to \(\Vspec_0(R)\).  For \(i\ne j\), orthogonality gives
\[
\norm{f_i-f_j}_{L^2(M)}=\sqrt2R.
\]
Thus the ball contains an infinite set separated by \(\sqrt2R\), so it is not totally bounded.
\end{proof}
\section{Auxiliary results and proofs for block-shrinkage density estimation}

\subsection{Proof of Theorem~\ref{thm:block-oracle}}

The proof is based on the following deterministic Hilbert-block estimate,
written in Euclidean coordinates.

\begin{lemma}[Soft-thresholding error on one block]
\label{lem:single-block-deterministic}
Let $\theta,y\in\R^d$, let $e=y-\theta$, and let $\tau>0$. Then:
\begin{enumerate}[label=(\roman*)]
\item On $\{\norm e_2\leq\tau/2\}$,
\[
        \norm{S_\tau(y)-\theta}_2^2
        \leq
        9\min\{\norm\theta_2^2,\tau^2\}.
\]
\item Always,
\[
        \norm{S_\tau(y)-\theta}_2^2
        \leq
        2\norm e_2^2+2\tau^2.
\]
\end{enumerate}
\end{lemma}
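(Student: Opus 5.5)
The plan is to exploit two elementary facts about the group soft-thresholding map $S_\tau(y)=(1-\tau/\norm y_2)_+y$. First, $S_\tau$ is the proximal map of $\tau\norm\cdot_2$, so $S_\tau(y)=y-\tau\,v$ with $v=y/\norm y_2$ when $\norm y_2>\tau$. Second, $\norm{y-S_\tau(y)}_2=\min\{\norm y_2,\tau\}\leq\tau$ always. I will prove (ii) first and then (i) by a two-case analysis on the size of $\norm\theta_2$ relative to $\tau$.

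For (ii), write $S_\tau(y)-\theta=(S_\tau(y)-y)+e$. Then $\norm{S_\tau(y)-\theta}_2\leq\tau+\norm e_2$, and $(a+b)^2\leq2a^2+2b^2$ gives the claim.

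For (i), assume $\norm e_2\leq\tau/2$ and split into two cases.

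\emph{Case $\norm\theta_2\leq\tau/2$.} Here $\norm y_2\leq\norm\theta_2+\norm e_2\leq\tau$, so $S_\tau(y)=0$. The error is therefore $\norm\theta_2^2=\min\{\norm\theta_2^2,\tau^2\}$, which is at most $9\min\{\cdot\}$.

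\emph{Case $\norm\theta_2>\tau/2$.} The same decomposition as in (ii) gives $\norm{S_\tau(y)-\theta}_2\leq\tau+\tau/2=3\tau/2$, hence the squared error is at most $\tfrac94\tau^2$. It remains to compare this with $9\min\{\norm\theta_2^2,\tau^2\}$.
\begin{itemize}
\item If $\norm\theta_2\geq\tau$, the minimum is $\tau^2$ and $\tfrac94\tau^2\leq9\tau^2$.
\item If $\tau/2<\norm\theta_2<\tau$, the minimum is $\norm\theta_2^2>\tau^2/4$, so $9\norm\theta_2^2>\tfrac94\tau^2$.
\end{itemize}
In both subcases the bound holds. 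The constant $9$ is exactly what makes the boundary case $\norm\theta_2\approx\tau/2$ work, so it is not wasteful.

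No step is a genuine obstacle. The only point needing care is the boundary subcase $\tau/2<\norm\theta_2<\tau$, where one must use the lower bound $\norm\theta_2>\tau/2$ to convert $\tau^2$ into $\norm\theta_2^2$. Everything else is the triangle inequality together with the bound $\norm{y-S_\tau(y)}_2\leq\tau$. That bound follows directly from the formula: the shrinkage moves $y$ radially by $\min\{\norm y_2,\tau\}$.
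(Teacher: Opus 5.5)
Your proof is correct and is essentially the paper's argument: part (ii) is the same triangle inequality with $\|y-S_\tau(y)\|_2\le\tau$, and in part (i) you split on $\|\theta\|_2$ versus $\tau/2$, whereas the paper splits on whether the block is killed ($\|y\|_2\le\tau$, error exactly $\|\theta\|_2\le 3\tau/2$) or survives (error $\le 3\tau/2$ together with $\|\theta\|_2>\tau/2$), which only reorganizes the same estimates. One correction to your closing remark: the factor $9$ comes from the crude bound $3\tau/2$, not from the lemma itself, since near $\|\theta\|_2\approx\tau/2$ the actual error is about $\|\theta\|_2$, and a direct check shows the sharp constant is $9/4$ (attained at $\|\theta\|_2=3\tau/2$, $y=\tfrac23\theta$).
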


The formula for \(S_\tau\) is the standard proximal map of
\(\tau\|\cdot\|_2\); see
\cite[Secs.~6.5.1 and 6.5.4]{ParikhBoyd2014}.  We record the short verification
because the constants are used below.

\begin{proof}
If \(\|y\|_2\leq\tau\), then \(S_\tau(y)=0\) and
\(\|S_\tau(y)-\theta\|_2\leq\|e\|_2+\tau\).  If
\(\|y\|_2>\tau\), then
\(S_\tau(y)-\theta=e-\tau y/\|y\|_2\), so the same bound holds.  Squaring
gives part~(ii).

Assume now \(\|e\|_2\leq\tau/2\).  In the first case the error is exactly
\(\|\theta\|_2\), and also
\(\|\theta\|_2\leq\|y\|_2+\|e\|_2\leq3\tau/2\).  In the second case the
error is at most \(3\tau/2\), while
\(\|\theta\|_2\geq\|y\|_2-\|e\|_2>\tau/2\); hence the error is at most
\(3\|\theta\|_2\).  In either case its square is bounded by both
\(9\|\theta\|_2^2\) and \(9\tau^2\), proving part~(i).
\end{proof}

\begin{lemma}[Single-block risk bound]
\label{lem:single-block-risk}
Under Assumption~\ref{ass:block-deviation}, for every $j\in\cJ$,
\[
        \E_{p_0}
        \norm{\mathsf S_{\tau_j}(\widehat g_j)-g_j}_{L^2(M)}^2
        \leq
        9\min\left\{\norm{g_j}_{L^2(M)}^2,\tau_j^2\right\}
        +2\eta_j.
\]
\end{lemma}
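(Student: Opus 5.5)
We will deduce Lemma~\ref{lem:single-block-risk} from Lemma~\ref{lem:single-block-deterministic}. First we pass to coordinates. Fix an arbitrary orthonormal basis of $E_j$ and identify $E_j$ isometrically with $\R^{d_j}$. Under this identification, $g_j\mapsto\theta$, $\widehat g_j\mapsto y$ and $\varepsilon_j\mapsto e=y-\theta$. Radial soft thresholding $\mathsf S_{\tau_j}$ corresponds to the Euclidean map $S_{\tau_j}$. Both the $L^2(M)$ error and the $L^2(M)$ norms of $g_j$ and $\varepsilon_j$ are preserved. It therefore suffices to bound $\E\norm{S_{\tau_j}(y)-\theta}_2^2$.

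Next we split on the event $G=\{\norm{\varepsilon_j}_{L^2(M)}\le\tau_j/2\}$. On $G$, part~(i) of Lemma~\ref{lem:single-block-deterministic} gives the deterministic bound $9\min\{\norm{g_j}^2,\tau_j^2\}$. Multiplying by $\bbone_G\le1$ and taking expectations contributes at most $9\min\{\norm{g_j}^2,\tau_j^2\}$. On $G^c$, part~(ii) gives $2\norm{\varepsilon_j}^2+2\tau_j^2$, so that contribution is
\[
2\,\E\!\left[\norm{\varepsilon_j}^2\bbone_{G^c}\right]+2\tau_j^2\,\PP(G^c).
\]
By Assumption~\ref{ass:block-deviation} this is at most $2\eta_j$, because the assumption's event is exactly $G^c$, the strict inequality $\norm{\varepsilon_j}>\tau_j/2$.

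Adding the two contributions gives the claimed bound. There is no real obstacle here. The only points to check are that the constant $9$ is used without inflation on $G$, and that the event in the assumption coincides with $G^c$.
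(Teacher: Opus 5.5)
Your proposal is correct and follows the paper's proof exactly: you pass to coordinates via the isometry $E_j\cong\R^{d_j}$, split on $G=\{\norm{\varepsilon_j}_{L^2(M)}\le\tau_j/2\}$, apply parts (i) and (ii) of Lemma~\ref{lem:single-block-deterministic} on $G$ and $G^c$ respectively, and bound the $G^c$ contribution by $2\eta_j$ via Assumption~\ref{ass:block-deviation}.
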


\begin{proof}
Fix an orthonormal basis of $E_j$ and let $\widehat\theta_j$, $\theta_j$, and
$e_j$ be the coordinate vectors of $\widehat g_j$, $g_j$, and
$\widehat g_j-g_j$. The coordinate map is an isometry, and radial thresholding
corresponds to $S_{\tau_j}$. On
$G_j=\{\norm{e_j}_2\leq\tau_j/2\}$, apply
Lemma~\ref{lem:single-block-deterministic}(i). On $G_j^c$, apply part (ii),
take expectations, and use Assumption~\ref{ass:block-deviation}.
\end{proof}

\begin{proof}[Proof of Theorem~\ref{thm:block-oracle}]
The constant block is estimated exactly. Orthogonality gives
\[
        \norm{\widetilde p_{\cJ}-p_0}_{L^2(M)}^2
        =
        \sum_{j\in\cJ}
        \norm{\mathsf S_{\tau_j}(\widehat g_j)-g_j}_{L^2(M)}^2
        +
        \sum_{j\notin\cJ,\,j\geq1}
        \norm{P_jp_0}_{L^2(M)}^2.
\]
Apply Lemma~\ref{lem:single-block-risk} block by block and take expectations.
The projected estimator satisfies the same bound because Hilbert projection
onto the closed convex set $\cD$ cannot increase distance to $p_0\in\cD$.
\end{proof}

\subsection{Proof of Proposition~\ref{prop:bernstein-threshold}}
The following elementary lemma records the tail-integration step used below.

\begin{lemma}[Tail integration]
\label{lem:block-tail-integration}
Let $Z\geq0$ and suppose that, for $x\geq1$,
\[
        \PP\{Z>u(x)\}\leq e^{-x},
\]
where $u$ is nondecreasing and $u(2x)\leq C_u u(x)$ for a constant $C_u$.
Then there are constants $C,c>0$, depending only on $C_u$, such that for every
$\xi\geq1$ and $\tau=2u(\xi)$,
\[
        \E\!\left[Z^2\bbone\{Z>\tau/2\}\right]
        +\tau^2\PP\{Z>\tau/2\}
        \leq
        C\tau^2e^{-c\xi}.
\]
\end{lemma}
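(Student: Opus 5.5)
The plan is to convert the tail bound into a bound on the tail of $Z^2$ and integrate. Set $\tau=2u(\xi)$, so that $\tau/2=u(\xi)$ and $\PP\{Z>\tau/2\}\le e^{-\xi}$. The second term is then immediately at most $\tau^2e^{-\xi}$. For the first term we use the layer-cake formula
\[
        \E\!\left[Z^2\bbone\{Z>u(\xi)\}\right]
        =
        u(\xi)^2\PP\{Z>u(\xi)\}
        +\int_{u(\xi)}^\infty 2t\,\PP\{Z>t\}\,dt .
\]
The boundary part is again at most $\tfrac14\tau^2e^{-\xi}$. It therefore remains to bound the integral.

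We split the range $[u(\xi),\infty)$ into dyadic shells indexed by the coding level, namely $t\in(u(2^k\xi),u(2^{k+1}\xi)]$ for $k\ge0$. On such a shell, monotonicity of $u$ gives $\PP\{Z>t\}\le\PP\{Z>u(2^k\xi)\}\le e^{-2^k\xi}$, because $2^k\xi\ge1$. The shell contributes at most $u(2^{k+1}\xi)^2e^{-2^k\xi}$. Iterating the doubling condition gives $u(2^{k+1}\xi)\le C_u^{k+1}u(\xi)$, so the integral is bounded by
\[
        u(\xi)^2\sum_{k\ge0}C_u^{2(k+1)}e^{-2^k\xi}.
\]

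The only genuine issue is showing this series is $\lesssim e^{-c\xi}$ uniformly in $\xi\ge1$, since $C_u^{2k}$ grows geometrically while the exponent grows doubly exponentially. We use $\xi\ge1$ to write
\[
        2^k\xi\ge \xi/2+2^{k-1},
\]
which holds because $2^k\xi-\xi/2\ge 2^{k-1}\xi\ge2^{k-1}$. This gives
\[
        \sum_{k\ge0}C_u^{2(k+1)}e^{-2^k\xi}
        \le e^{-\xi/2}\sum_{k\ge0}C_u^{2(k+1)}e^{-2^{k-1}}
        =C(C_u)e^{-\xi/2},
\]
and the last series converges because the doubly exponential decay beats the geometric growth.

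Collecting the three pieces yields the claim with $c=1/2$ and $C$ depending only on $C_u$. Here we use that $u(\xi)^2=\tau^2/4$ and that $e^{-\xi}\le e^{-\xi/2}$. If $u$ could vanish the statement is trivial, so no further case analysis is needed.
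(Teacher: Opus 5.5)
Your proof is correct and follows the paper's argument. Both decompose into dyadic layers at the coding levels $2^k\xi$, apply the tail bound at $2^k\xi$, use the iterated doubling estimate $u(2^{k+1}\xi)\le C_u^{k+1}u(\xi)$, and sum over $k$. You differ only in routing this through the layer-cake formula and in making the ``doubly exponential beats geometric'' step explicit via $2^k\xi\ge \xi/2+2^{k-1}$, which gives $c=1/2$.

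One point is left implicit in both your proof and the paper's: the shells cover only $(u(\xi),\sup_x u(x))$. This loses nothing, because if $u$ is bounded then $\PP\{Z>t\}\le e^{-x}$ for every $x\ge1$ whenever $t\ge\sup_x u(x)$, so $\PP\{Z>t\}=0$ there.
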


\begin{proof}
Decompose $\{Z>u(\xi)\}$ into the dyadic layers
$\{u(2^k\xi)<Z\leq u(2^{k+1}\xi)\}$ and use the tail bound at
$2^k\xi$. Iterating the doubling condition gives
$u(2^{k+1}\xi)^2\leq C_u^{2(k+1)}u(\xi)^2$, while
$e^{-2^k\xi}$ dominates this geometric growth. Summing over $k\geq0$ proves
the expectation bound; the probability term follows directly from the tail
inequality at $x=\xi$.
\end{proof}

\begin{proof}[Proof of Proposition~\ref{prop:bernstein-threshold}]
Fix an orthonormal basis of $E_j$ and let $e_j\in\R^{d_j}$ be the coordinate
vector of $\widehat g_j-g_j$. Then
\[
        e_j
        =
        \frac1n\sum_{i=1}^nY_i,
        \qquad
        Y_i=\Phi_j(X_i)-\E_{p_0}\Phi_j(X_i).
\]
The $Y_i$ are independent, mean-zero Hilbert-space-valued random variables.
First,
\[
\begin{aligned}
        \E_{p_0}\norm{\Phi_j(X)}_2^2
        &=\int_MK_j(x,x)p_0(x)\,d\nu(x)\\
        &\leq b\int_MK_j(x,x)\,d\nu(x)
        =bd_j.
\end{aligned}
\]
Hence $\E\norm{e_j}_2^2\leq bd_j/n$. Moreover, for every unit vector
$u\in\R^{d_j}$,
\[
        \E_{p_0}\ip{u}{\Phi_j(X)}^2
        \leq
        b\int_M\left(\sum_{\ell=1}^{d_j}u_\ell\phi_{j,\ell}(x)\right)^2
        d\nu(x)
        =b,
\]
so the weak variance parameter of $\sum_iY_i$ is at most $nb$.

Finally,
\[
        \norm{\Phi_j(x)}_2\leq\sqrt{\kappa_j},
        \qquad
        \norm{\E_{p_0}\Phi_j(X)}_2
        \leq\sqrt{bd_j}
        \leq\sqrt b\sqrt{\kappa_j},
\]
because $\kappa_j\geq\int_MK_j(x,x)d\nu(x)=d_j$. Thus
\[
        \norm{Y_i}_2\leq(1+\sqrt b)\sqrt{\kappa_j}=:B_j.
\]
Lemma~\ref{lem:bousquet-pinelis} therefore gives
\[
        \PP\left(
        \left\|\sum_{i=1}^nY_i\right\|_2
        >C\left[\sqrt{nbd_j}+\sqrt{nbx}+B_jx\right]
        \right)
        \leq e^{-x}.
\]
After division by $n$ and absorption of constants, this is the stated tail
bound. The coordinate norm of $e_j$ equals
$\norm{\widehat g_j-g_j}_{L^2(M)}$.

For the remainder assertion, let
\[
        u_j(x)
        :=
        C\left[
        \sqrt{\frac{b(d_j+x)}n}
        +(1+\sqrt b)\frac{\sqrt{\kappa_j}\,x}{n}
        \right].
\]
This function is nondecreasing and satisfies $u_j(2x)\leq2u_j(x)$ for
$x\geq1$. Lemma~\ref{lem:block-tail-integration}, with
$\tau_j=2u_j(\xi_j)$, gives
\[
        \E\!\left[
        \norm{\widehat g_j-g_j}_{L^2(M)}^2
        \bbone\!\left\{\norm{\widehat g_j-g_j}_{L^2(M)}>\tau_j/2\right\}
        \right]
        +
        \tau_j^2\PP\!\left(
        \norm{\widehat g_j-g_j}_{L^2(M)}>\tau_j/2
        \right)
        \leq
        C'\tau_j^2e^{-c\xi_j}.
\]
\end{proof}

\subsection{Proof of Theorem~\ref{thm:multiplicity-sensitive-upper}}

\begin{lemma}[Block-variation-ball oracle optimization]
\label{lem:block-variation-oracle-optimization}
Assume Assumption~\ref{ass:poly-growth}. 
Let \(A,\delta>0\), set \(w_j=(1+\mu_j)^{s/2}\), and let
\((v_j)_{j\ge1}\) be a nonnegative sequence satisfying
\[
    0\le v_j\le C_v\delta d_j,\qquad j\ge1.
\]
Define
\[
        \Psi(A,\delta)
        :=
        \sup_{\{r_j\geq0:\,\sum_{j\geq1}w_jr_j\leq A\}}
        \sum_{j\geq1}\min\{r_j^2,v_j\}.
\]
Then
\[
        \Psi(A,\delta)
        \leq
        C\rho_{\alpha,\gamma,s}(A,\delta),
\]
where $\rho_{\alpha,\gamma,s}$ is defined in
\eqref{eq:multiplicity-sensitive-rate}. The constant depends only on
$s,\alpha,\gamma,C_d,C_D,C_v$.
\end{lemma}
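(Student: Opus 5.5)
Since $v_j\le C_v\delta d_j\le C_vC_d\delta(1+\mu_j)^\gamma$, we may replace $v_j$ by this bound and absorb constants. We then bound $\sum_j\min\{r_j^2,v_j\}$ for a feasible sequence $(r_j)$ by splitting the blocks at a spectral cutoff $\Lambda\ge1$ and optimizing $\Lambda$ separately in the two regimes.

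\emph{Regime $s\ge\gamma$.} Blocks with $\mu_j\le\Lambda$ are bounded by their noise level, and the cumulative growth bound in Assumption~\ref{ass:poly-growth} gives
\[
\sum_{\mu_j\le\Lambda}v_j\le C\delta(1+\Lambda)^{\alpha+\gamma}.
\]
For the tail blocks the bound $\sum r_j^2\le A^2(1+\Lambda)^{-s}$ is too crude, since it gives only the dense rate of Remark~\ref{remark:comparison_dense_spectral}. We instead use $\min\{r_j^2,v_j\}\le r_j\sqrt{v_j}$. This yields
\[
\sum_{\mu_j>\Lambda}\min\{r_j^2,v_j\}\le\sum_{\mu_j>\Lambda}w_jr_j\cdot\frac{\sqrt{v_j}}{w_j}\le A\sup_{\mu_j>\Lambda}\frac{\sqrt{C\delta}\,(1+\mu_j)^{\gamma/2}}{(1+\mu_j)^{s/2}}.
\]
When $s\ge\gamma$ this supremum is at most $A\sqrt{C\delta}\,(1+\Lambda)^{(\gamma-s)/2}$. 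The total bound is therefore
\[
\delta(1+\Lambda)^{\alpha+\gamma}+A\sqrt\delta\,(1+\Lambda)^{(\gamma-s)/2}.
\]
Balancing gives $(1+\Lambda)^{\alpha+(\gamma+s)/2}=A\delta^{-1/2}$, that is $(1+\Lambda)=(A^2/\delta)^{1/(s+\gamma+2\alpha)}$. The resulting value is $\delta^{1-\frac{\alpha+\gamma}{s+\gamma+2\alpha}}A^{\frac{2(\alpha+\gamma)}{s+\gamma+2\alpha}}$, which is exactly the second branch of $\rho$, since $1-\frac{\alpha+\gamma}{s+\gamma+2\alpha}=\frac{s+\alpha}{s+\gamma+2\alpha}$.

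If the optimal $\Lambda$ falls below $1$, that is $A^2\lesssim\delta$, we bound the sum directly by $\sum_j r_j^2\le(\sum_j w_jr_j)^2\le A^2$. This is at most $\rho$ because $A^2\le A^{2\theta}\delta^{1-\theta}$ whenever $A^2\le\delta$ and $\theta\le1$. The same trick handles the lower-range case of the first regime.

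\emph{Regime $0<s<\gamma$.} Here the supremum above is unbounded in the tail, so we split the other way. Blocks whose weight-normalized noise is small are handled by $\min\le r_j^2$. Blocks whose noise is large are handled by $\min\le r_j\sqrt{v_j}$ and $\min\le v_j$. Concretely, a cleaner route is the pointwise inequality
\[
\min\{r_j^2,v_j\}\le r_j^{2\theta}v_j^{1-\theta}\quad\text{with }\theta=\tfrac{\gamma}{s+\gamma}\in(0,1).
\]
Writing $r_j^{2\theta}=(w_jr_j)^{2\theta}w_j^{-2\theta}$, we get
\[
\min\{r_j^2,v_j\}\le C\delta^{1-\theta}(w_jr_j)^{2\theta}(1+\mu_j)^{-s\theta+\gamma(1-\theta)}.
\]
The exponent $-s\theta+\gamma(1-\theta)$ vanishes by the choice of $\theta$. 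However, $2\theta\ge1$ only when $\gamma\ge s$, which holds here. Since $2\theta\ge1$, we have $\sum_j x_j^{2\theta}\le(\sum_j x_j)^{2\theta}\le A^{2\theta}$. This gives $C\delta^{s/(s+\gamma)}A^{2\gamma/(s+\gamma)}$, the first branch, with no use of $\alpha$ at all.

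This interpolation idea might also simplify the first regime, but there $2\theta<1$, so the cumulative count $\alpha$ is genuinely needed. I expect the main obstacle to be the bookkeeping in the $s\ge\gamma$ case: checking that the two terms balance to exactly the stated exponents, and handling the boundary cases ($\Lambda<\Lambda$-threshold of Assumption~\ref{ass:poly-growth}, $\gamma=0$, and $s=\gamma$, where the two branches agree) so that constants depend only on $s,\alpha,\gamma,C_d,C_D,C_v$.
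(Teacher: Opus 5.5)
Your proof is correct. For $s\ge\gamma$ it is essentially the paper's Case~2: split at $1+\Lambda=(A^2/\delta)^{1/(s+\gamma+2\alpha)}$, bound the low blocks by $v_j$ together with the cumulative count, and bound the high blocks by $r_j\sqrt{v_j}=w_jr_j\cdot\sqrt{v_j}/w_j$ together with the $\ell^1$ budget. When $A^2\lesssim\delta$ you fall back on $\sum_j r_j^2\le A^2$, exactly as the paper does.

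In the rough regime $0<s<\gamma$ you take a genuinely different route. The paper again uses a cutoff, $1+\Lambda_*=(A^2/\delta)^{1/(s+\gamma)}$. It bounds low frequencies by $r_j\sqrt{v_j}\lesssim\sqrt\delta\,(1+\Lambda_*)^{(\gamma-s)/2}w_jr_j$; since $\sqrt{v_j}/w_j$ increases with $\mu_j$ when $s<\gamma$, it is controlled by its value at the cutoff. It bounds high frequencies by $\sum_{\mu_j>\Lambda_*}r_j^2\le A^2(1+\Lambda_*)^{-s}$, and treats $A^2\le\delta$ as a separate trivial case. Your preliminary verbal description of a split has these two sides reversed, but you discard it, so this is harmless.

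Your interpolation $\min\{r_j^2,v_j\}\le r_j^{2\theta}v_j^{1-\theta}$ with $\theta=\gamma/(s+\gamma)$ makes the frequency weights cancel exactly. The bound $\sum_j x_j^{2\theta}\le(\sum_j x_j)^{2\theta}$ for $2\theta>1$ then finishes in one line, with no cutoff and no case distinction. What this buys is transparency about the extremal configuration. Equality requires a single nonzero block (equality in $\ell^1\subset\ell^{2\theta}$) with $r_j^2=v_j$ (equality in the interpolation). That forces $(1+\mu_j)^{s+\gamma}\asymp A^2/\delta$, which is precisely the single-eigenspace construction of Theorem~\ref{thm:minimax-lower-rough}, with $1+\Lambda_n\asymp(nA_R^2)^{1/(s+\gamma)}$.

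Your argument also makes exact why the counting exponent $\alpha$ must enter once $s>\gamma$: then $2\theta<1$ and $\ell^1$ no longer controls the $\ell^{2\theta}$ sum. The paper's cutoff argument instead keeps the two regimes structurally parallel, but it obscures this single-block mechanism.
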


\begin{proof}
For every sequence in the supremum defining \(\Psi(A,\delta)\),
\(\sum_{j\ge1} w_j r_j\le A\). Since \(w_j\ge1\) and \(r_j\ge0\),
\[
    \sum_{j\ge1}r_j^2
    \le \left(\sum_{j\ge1}r_j\right)^2
    \le \left(\sum_{j\ge1}w_jr_j\right)^2
    \le A^2.
\]
Since $v_j\lesssim\delta d_j$ and
$d_j\lesssim(1+\mu_j)^\gamma$,
\[
        \frac{\sqrt{v_j}}{w_j}
        \lesssim
        \sqrt\delta(1+\mu_j)^{(\gamma-s)/2}.
\]
We repeatedly use $\min\{r_j^2,v_j\}\leq r_j\sqrt{v_j}$.

\medskip
\noindent\textbf{Case 1: $0<s<\gamma$.}
If $A^2\leq\delta$, then
$\sum_j\min\{r_j^2,v_j\}\leq\sum_jr_j^2\leq A^2\leq\rho_{\alpha,\gamma,s}(A,\delta)$.
Assume therefore that $A^2>\delta$ and set
\[
        1+\Lambda_*
        =
        \left(\frac{A^2}{\delta}\right)^{1/(s+\gamma)}.
\]
For $\mu_j\leq\Lambda_*$,
\[
        \min\{r_j^2,v_j\}
        \lesssim
        \sqrt\delta(1+\Lambda_*)^{(\gamma-s)/2}w_jr_j,
\]
so the low-frequency contribution is at most
\[
        A\sqrt\delta(1+\Lambda_*)^{(\gamma-s)/2}
        =
        A^{\frac{2\gamma}{s+\gamma}}
        \delta^{\frac{s}{s+\gamma}}.
\]

For the high-frequency contribution, using
\(\min\{r_j^2,v_j\}\le r_j^2\), \(w_j^2=(1+\mu_j)^s\),
and the constraint \(\sum_{j\ge1}w_jr_j\le A\), we obtain
\[
\begin{aligned}
\sum_{\mu_j>\Lambda_*}\min\{r_j^2,v_j\}
&\le \sum_{\mu_j>\Lambda_*}r_j^2
=\sum_{\mu_j>\Lambda_*}(1+\mu_j)^{-s}(w_jr_j)^2\\
&\le (1+\Lambda_*)^{-s}
    \sum_{\mu_j>\Lambda_*}(w_jr_j)^2\\
&\le (1+\Lambda_*)^{-s}
    \left(\sum_{\mu_j>\Lambda_*}w_jr_j\right)^2\\
&\le A^2(1+\Lambda_*)^{-s}=
A^{\frac{2\gamma}{s+\gamma}}
\delta^{\frac{s}{s+\gamma}}.
\end{aligned}
\]

\medskip
\noindent\textbf{Case 2: $s\geq\gamma$.}
Again, if $A^2\leq\delta$, the conclusion follows from the trivial bound
$\sum_jr_j^2\leq A^2$. Otherwise set
\[
        1+\Lambda_*
        =
        \left(\frac{A}{\sqrt\delta}\right)^{1/(\alpha+(s+\gamma)/2)}.
\]
The low-frequency contribution is bounded by
\[
        \sum_{\mu_j\leq\Lambda_*}v_j
        \lesssim
        \delta(1+\Lambda_*)^{\alpha+\gamma}.
\]
Since $s\geq\gamma$, the high-frequency contribution is bounded by
\[
\begin{aligned}
        \sum_{\mu_j>\Lambda_*}r_j\sqrt{v_j}
        &\lesssim
        \sqrt\delta(1+\Lambda_*)^{-(s-\gamma)/2}
        \sum_{\mu_j>\Lambda_*}w_jr_j\\
        &\leq
        A\sqrt\delta(1+\Lambda_*)^{-(s-\gamma)/2}.
\end{aligned}
\]
The two terms are balanced by the definition of $\Lambda_*$, and substitution
gives
\[
        A^{\frac{2(\alpha+\gamma)}{s+\gamma+2\alpha}}
        \delta^{\frac{s+\alpha}{s+\gamma+2\alpha}}.
\]
\end{proof}

\begin{proof}[Proof of Theorem~\ref{thm:multiplicity-sensitive-upper}]
Apply Theorem~\ref{thm:block-oracle}. By
\eqref{eq:nonconstant-budget-constraint},
\[
\begin{aligned}
        \sum_{j\notin\cJ_n,\,j\geq1}
        \norm{P_jp_0}_{L^2(M)}^2
        &\leq
        (1+\Lambda_{\max,n})^{-s}
        \left(
        \sum_{j\notin\cJ_n}
        (1+\mu_j)^{s/2}\norm{P_jp_0}_{L^2(M)}
        \right)^2\\
        &\leq
        A_R^2(1+\Lambda_{\max,n})^{-s}.
\end{aligned}
\]
For the oracle term, set $r_j=\norm{P_jp_0}_{L^2(M)}$,
$v_j=\tau_j^2$ on $\cJ_n$, and $v_j=0$ outside $\cJ_n$. Then
$\sum_jw_jr_j\leq A_R$ and $v_j\leq C_v\delta_nd_j$, so
Lemma~\ref{lem:block-variation-oracle-optimization} gives
\[
        \sum_{j\in\cJ_n}\min\{r_j^2,\tau_j^2\}
        \leq
        C\rho_{\alpha,\gamma,s}(A_R,\delta_n).
\]
The cutoff condition makes the omitted tail no larger than the same target
rate. Adding the deviation remainders proves
\eqref{eq:main-upper-bound}; the projection step cannot increase the risk.
\end{proof}

\subsection{Proofs of the calibration corollaries}
\label{app:calibration-corollary-proofs}

\begin{proof}[Proof of Corollary~\ref{cor:homogeneous-threshold}]
This is the classical addition formula for Laplace eigenfunctions on a
compact homogeneous manifold; see \cite{gine1975addition}. Indeed, if
\(T\) is an isometry, then its unitary pullback commutes with
\(-\Delta_M\), and hence with the spectral projector \(P_j\). Therefore
\[
    K_j(Tx,Ty)=K_j(x,y).
\]
By transitivity, \(x\mapsto K_j(x,x)\) is constant. Since \(\nu(M)=1\),
\[
    K_j(x,x)
    =
    \int_M K_j(y,y)\,d\nu(y)
    =
    \operatorname{tr}(P_j)
    =
    d_j.
\]
Thus \(\kappa_j=d_j\). Substitution into \eqref{eq:bernstein-threshold} gives
\eqref{eq:homogeneous-threshold-square}.
\end{proof}

\begin{proof}[Proof of Corollary~\ref{cor:summable-coding}]
Under \eqref{eq:summable-code-local},
$d_j+\xi_j\lesssim d_j$ and
$d_j\xi_j^2/n^2\leq d_j/n$. The threshold conclusion follows from
\eqref{eq:homogeneous-threshold-square}, and
\eqref{eq:summable-code-remainder} follows from
\eqref{eq:bernstein-remainder}.
\end{proof}

\begin{proof}[Proof of Corollary~\ref{cor:explicit-no-log-examples}]
On $S^m$, $d_\ell\asymp \ell^{m-1}$, while on $SO(3)$,
$d_\ell\asymp\ell^2$. Hence $\xi_\ell\lesssim d_\ell$. Moreover, the stated
choice of $A_0$ gives
\[
        \sum_{\ell\geq1}d_\ell e^{-c\xi_\ell}<\infty,
\]
because the summand is of order $\ell^{m-1-cA_0}$ on $S^m$ and of order
$\ell^{2-cA_0}$ on $SO(3)$.

For fixed $s$, $R$, and $b$, the quantity
$\rho_{\alpha,\gamma,s}(A_R,b/n)$ is a positive power of $n^{-1}$.
Therefore the cutoff in \eqref{eq:explicit-cutoff} grows at most
polynomially in $n$. Since $\mu_\ell\asymp\ell^2$, every retained index
satisfies $\ell\leq n^q$ for some fixed $q>0$, and hence
$\xi_\ell^2=O((\log n)^2)\leq n$ for all sufficiently large $n$.
Corollary~\ref{cor:homogeneous-no-log-general} proves
\eqref{eq:no-log-examples-upper}. Finally, in both branches of
\eqref{eq:multiplicity-sensitive-rate}, the exponent of $b/n$ lies strictly
between zero and one for these manifolds. Thus the term $b/n$ is absorbed by
the leading rate for fixed $R>1$ and $b$.
\end{proof}

\subsection{Spectral growth in standard examples}
\label{sec:specializations}

On every compact homogeneous manifold with normalized volume,
$K_j(x,x)=d_j$. Consequently, the common-code calibration
$\xi_j\leq L_n$, $L_n^2\leq n$, gives
$\tau_j^2\lesssim(bL_n/n)d_j$. The summable calibration of
Corollary~\ref{cor:summable-coding} gives the sharper scale $bd_j/n$ whenever
the code conditions hold.

For the round sphere \(S^m\), the standard spherical-harmonic spectrum
\cite{chavel1984eigenvalues} is
\[
        \mu_\ell=\ell(\ell+m-1),
        \qquad
        d_\ell\asymp\ell^{m-1}.
\]
Thus
\[
        d_\ell\asymp(1+\mu_\ell)^{(m-1)/2},
        \qquad
        \sum_{\mu_\ell\leq\Lambda}d_\ell
        \asymp(1+\Lambda)^{m/2},
\]
so
\[
        \gamma=\frac{m-1}{2},
        \qquad
        \alpha=\frac12.
\]
For $m\geq2$, the choice
$\xi_\ell=A_0\log(e+\ell)$ satisfies the summable-code conditions for
$A_0$ sufficiently large.

More generally, let $G$ be a compact connected Lie group of dimension $m$ and
rank $r$, equipped with normalized Haar measure and a bi-invariant metric.
If the dimension of the full Casimir eigenspace at eigenvalue $\lambda$ obeys
\[
        d_\lambda
        \lesssim
        (1+\lambda)^{(m-r)/2+\zeta}
\]
for a collision exponent $\zeta\in[0,r/2]$, then Weyl's law gives an admissible
pair
\[
        \gamma_G=\frac{m-r}{2}+\zeta,
        \qquad
        \alpha_G=\frac r2-\zeta.
\]

For \(SO(3)\), Peter--Weyl theory and the Casimir spectrum
\cite{helgason2000groups} give irreducible representations indexed by
\(\ell\geq0\),
\[
        \mu_\ell=\ell(\ell+1),
        \qquad
        d_\ell=(2\ell+1)^2\asymp\ell^2.
\]
Hence
\[
        \gamma=1,
        \qquad
        \alpha=\frac12,
\]
and $\xi_\ell=A_0\log(e+\ell)$ again yields summable coding at the ideal
variance scale.

\section{Auxiliary results and proofs for the minimax lower bounds}

\subsection{Proof of Corollary~\ref{cor:minimax-standard-homogeneous}}
\label{app:minimax-short-proofs}

\begin{proof}
The lower bounds follow from Theorems~\ref{thm:minimax-lower-dense} and
\ref{thm:minimax-lower-rough}. The inclusion $\Pcal_s(R,1/2,3/2)\subseteq\Pcal_s(R,3/2)$ and
Theorem~\ref{thm:multiplicity-sensitive-upper} give the common-code upper
bound with $\delta_n\asymp L_n/n$.
Indeed, for the common choice
\(\xi_j=L_n=C_0\log n\), homogeneity gives, for all sufficiently
large \(n\),
\[
    \tau_j^2 \lesssim \frac{L_n}{n}d_j,
    \qquad
    \eta_j \lesssim \tau_j^2 e^{-cL_n},
    \qquad j\in J_n.
\]
Consequently,
\[
    \sum_{j\in J_n}\eta_j
    \lesssim
    \frac{L_n}{n}e^{-cL_n}
    \sum_{j\in J_n}d_j
    \le
    \frac{L_n}{n}
    D(\Lambda_{\max,n})e^{-cL_n}
    =
    o\!\left(
        \rho_{\alpha,\gamma,s}
        \!\left(A_R,\frac{L_n}{n}\right)
    \right).
\]
For the last relation, note that, for fixed \(M,s,R\), the cutoff
\(\Lambda_{\max,n}\) may be chosen to grow at most polynomially in
\(n\). Hence both \(D(\Lambda_{\max,n})\) and
\(\rho_{\alpha,\gamma,s}(A_R,L_n/n)^{-1}\) grow at most polynomially
in \(n\), whereas
\[
    e^{-cL_n}=n^{-cC_0}.
\]
Thus the displayed \(o(\,\cdot\,)\) relation holds when \(C_0\) is
chosen sufficiently large.
The summable-code upper bound follows from
Corollary~\ref{cor:homogeneous-no-log-general}.

On \(S^m\),
\[
        \mu_\ell=\ell(\ell+m-1),
        \qquad
        d_\ell\asymp\ell^{m-1},
\]
which gives \(\alpha=1/2\) and \(\gamma=(m-1)/2\). For \(m\geq2\), the code
\(\xi_\ell=A_0\log(e+\ell)\) satisfies the summability condition after
multiplication by \(d_\ell\). On \(SO(3)\),
\[
        \mu_\ell=\ell(\ell+1),
        \qquad
        d_\ell=(2\ell+1)^2\asymp\ell^2,
\]
so \(\alpha=1/2\), \(\gamma=1\), and the same coding argument applies. In
each displayed example the exponent of \(n^{-1}\) in
\(\rho_{\alpha,\gamma,s}(A_R,n^{-1})\) is strictly less than one, so the
additional \(n^{-1}\) term in \eqref{eq:minimax-summable-calibration} is
negligible.
\end{proof}

\label{app:minimax-lower-proofs}

We first isolate the three ingredients used in both lower-bound constructions:
a random spectral-window sup-norm estimate, a bounded spherical packing, and a
corresponding sup-norm estimate inside a single eigenspace.

\begin{lemma}[Random spectral-window sup-norm bound]
\label{lem:random-window-supnorm}
Fix \(c_0\in(0,1)\), and for \(\Lambda\geq1\) set
\[
        \Hcal_\Lambda
        :=
        \bigoplus_{j:\,c_0\Lambda<\mu_j\leq\Lambda}E_j.
\]
Let \(U_\Lambda\) be uniformly distributed on the unit sphere of
\(\Hcal_\Lambda\). There exist constants \(C>0\) and \(\Lambda_0\geq1\),
depending only on \((M,g)\) and \(c_0\), such that, whenever
\(\Lambda\geq\Lambda_0\) and \(\dim(\Hcal_\Lambda)\geq2\),
\begin{equation}
\label{eq:random-window-supnorm}
        \PP\!\left(
        \norm{U_\Lambda}_{L^\infty(M)}
        \leq C\sqrt{\log(2+\Lambda)}
        \right)
        \geq\frac12.
\end{equation}
\end{lemma}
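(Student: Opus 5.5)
We prove \eqref{eq:random-window-supnorm} by combining concentration of measure on the sphere of \(\Hcal_\Lambda\) with a discretization of \(M\) controlled by Bernstein-type gradient bounds. This is the argument behind the Burq--Lebeau randomization theorem \cite{burq2013injections}. Write \(N=\dim\Hcal_\Lambda\) and let \(K_\Lambda(x,y)=\sum_{c_0\Lambda<\mu_j\le\Lambda}K_j(x,y)\) be the reproducing kernel of the window.

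\emph{Step 1: pointwise concentration.} For fixed \(x\in M\), the map \(U\mapsto U(x)=\langle U,K_\Lambda(\cdot,x)\rangle\) is a linear functional on \(\Hcal_\Lambda\) with norm \(\sqrt{K_\Lambda(x,x)}\). Lévy's inequality for linear functionals on \(\mathbb S^{N-1}\) therefore gives
\[
\PP\bigl(|U_\Lambda(x)|>t\bigr)\le 2\exp\!\left(-\frac{cNt^2}{K_\Lambda(x,x)}\right).
\]
Here we need the ratio \(K_\Lambda(x,x)/N\) to be bounded uniformly in \(x\). This is the key geometric input. Hörmander's local Weyl law \cite{hormander1968spectral} gives \(K_{\le\Lambda}(x,x)=c_m\Lambda^{m/2}+O(\Lambda^{(m-1)/2})\) uniformly in \(x\). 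Differencing at \(\Lambda\) and \(c_0\Lambda\) yields \(K_\Lambda(x,x)\asymp\Lambda^{m/2}\) uniformly for large \(\Lambda\), and integrating over \(x\) gives \(N\asymp\Lambda^{m/2}\). Consequently \(K_\Lambda(x,x)\le C_1N\), and the tail bound becomes \(2e^{-ct^2/C_1}\).

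\emph{Step 2: net and gradient control.} Take a \(\varepsilon\)-net \(\{x_i\}\) of \(M\) with \(\varepsilon=\Lambda^{-1/2}/(2C_2)\), so its cardinality is at most \(C\Lambda^{m/2}\). For every \(g\in\Hcal_\Lambda\) (a sum of eigenfunctions with eigenvalues at most \(\Lambda\)), a Bernstein inequality \(\norm{\nabla g}_\infty\le C_2\Lambda^{1/2}\norm{g}_\infty\) holds. On a general manifold we obtain it from spectral-cluster gradient estimates \cite{shi2010gradient,Sogge1988SpectralClusters}. If these only give a polynomial loss \(\Lambda^{a}\), we instead take \(\varepsilon=\Lambda^{-1/2-a}\); the net size stays polynomial in \(\Lambda\), which costs only a constant inside the logarithm. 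Using this, \(\norm{g}_\infty\le\max_i|g(x_i)|+\varepsilon\norm{\nabla g}_\infty\le\max_i|g(x_i)|+\tfrac12\norm{g}_\infty\), hence \(\norm{g}_\infty\le2\max_i|g(x_i)|\).

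\emph{Step 3: union bound.} Combining Steps 1 and 2,
\[
\PP\bigl(\norm{U_\Lambda}_\infty>2t\bigr)\le C\Lambda^{m/2+a'}e^{-ct^2}.
\]
Choosing \(t=C\sqrt{\log(2+\Lambda)}\) with \(C\) large makes the right-hand side at most \(1/2\) for \(\Lambda\ge\Lambda_0\). The condition \(N\ge2\) only ensures that the uniform measure on the sphere is the nondegenerate object to which Lévy's inequality applies. The constants depend only on \((M,g)\) and \(c_0\).

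The main obstacle is Step 1's uniform diagonal comparability \(\sup_xK_\Lambda(x,x)\lesssim\dim\Hcal_\Lambda\) for a window of fixed relative width on an arbitrary manifold. The fallback plan is to cite it directly from Hörmander's pointwise Weyl remainder, or to invoke Burq--Lebeau's theorem, which states exactly this estimate. The gradient bound in Step 2 is secondary, since any polynomial loss is absorbed.
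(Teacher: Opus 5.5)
Your argument is correct, but it takes a different route from the paper. The paper's proof is a direct citation. It applies Theorem~2, (2.9), of Burq--Lebeau \cite{burq2013injections} with $h=\Lambda^{-1/2}$, $a_h=\sqrt{c_0}$, $b_h=1$, so that their window $h\sqrt{\mu_j}\in(a_h,b_h]$ is exactly $\Hcal_\Lambda$. It then reads off the tail $C\Lambda^{c_1/2}e^{-c_2t^2}$ and takes $t=C_0\sqrt{\log(2+\Lambda)}$.

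You instead reprove the needed special case of that theorem, using the scheme the paper itself uses for Lemma~\ref{lem:random-eigenspace-supnorm}: sub-Gaussian point evaluations, a polynomial-size net, a Lipschitz transfer, and a union bound. What your route buys is transparency. Step~1 shows that the only geometric input is diagonal comparability. In Lemma~\ref{lem:random-eigenspace-supnorm} this has to be assumed ($K_j(x,x)\le C_Kd_j$), whereas for a window of fixed relative width Weyl's law supplies it. You actually need less than differencing the pointwise asymptotics: the standard diagonal bound $\sup_xK_\Lambda(x,x)\le\sup_xK_{\le\Lambda}(x,x)\lesssim\Lambda^{m/2}$, together with $N=D(\Lambda)-D(c_0\Lambda)\gtrsim\Lambda^{m/2}$ from the integrated Weyl law, already suffices. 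The citation, in turn, buys brevity and explicit exponents.

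One correction in Step~2. The Shi--Xu estimate concerns a single eigenfunction, not sums over several eigenspaces, so it does not give the sharp Bernstein inequality on $\Hcal_\Lambda$. Your polynomial-loss fallback fixes this and is easy to make concrete. For fixed $r>m/2+1$, Sobolev embedding and $\nu(M)=1$ give, for $g\in\Hcal_\Lambda$,
$\norm{\nabla g}_{L^\infty(M)}\le C\norm{g}_{H^r(M)}\le C(1+\Lambda)^{r/2}\norm{g}_{L^2(M)}\le C(1+\Lambda)^{r/2}\norm{g}_{L^\infty(M)}$.
A net of mesh of order $\Lambda^{-r/2}$ and cardinality of order $\Lambda^{mr/2}$ therefore works, and the extra polynomial factor is absorbed by the choice of $t$.
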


\begin{proof}
Apply Theorem~2, equation~(2.9), of Burq and
Lebeau~\cite{burq2013injections} to the real unit sphere in their spectral
window
\[
        h\sqrt{\mu_j}\in(a_h,b_h].
\]
With
\[
        h=\Lambda^{-1/2},
        \qquad
        a_h=\sqrt{c_0},
        \qquad
        b_h=1,
\]
that window is exactly
\(c_0\Lambda<\mu_j\leq\Lambda\), hence its real spectral subspace is
\(\Hcal_\Lambda\). Their estimate gives, for every \(t\geq1\),
\[
        \PP\!\left(\|U_\Lambda\|_{L^\infty(M)}>t\right)
        \leq
        C h^{-c_1}e^{-c_2t^2}
        =C\Lambda^{c_1/2}e^{-c_2t^2},
        \qquad
        c_1=m\left(1+\frac m2\right),
\]
where their dimension \(d\) is denoted by \(m=\dim M\) here. Taking
\(t=C_0\sqrt{\log(2+\Lambda)}\), with \(C_0\) sufficiently large, makes
the right-hand side at most \(1/2\) for all sufficiently large \(\Lambda\).
\end{proof}

\begin{proposition}[Bounded spherical packing]
\label{prop:bounded-spherical-packing}
Let \(H\subset L^2(M)\) be an \(m\)-dimensional real Hilbert subspace, and let
\(U\) be uniformly distributed on its unit sphere. Suppose that
\[
        \PP\!\left(\norm{U}_{L^\infty(M)}\leq B\right)\geq\frac12
\]
for some \(B>0\). There exist universal constants \(c_0>0\) and
\(m_0\geq1\) such that, for every \(m\geq m_0\) and every
\(0<\varepsilon\leq(2B)^{-1}\), there is a finite set
\(\mathcal F\subset H\) satisfying
\begin{equation}
\label{eq:bounded-spherical-packing}
        \log|\mathcal F|\geq c_0m,
        \qquad
        \norm{f}_{L^2(M)}=\varepsilon,
        \qquad
        \norm{f}_{L^\infty(M)}\leq\frac12
\end{equation}
for every \(f\in\mathcal F\), and
\[
        \norm{f-g}_{L^2(M)}\geq\varepsilon
        \qquad
        \text{for all distinct }f,g\in\mathcal F.
\]
\end{proposition}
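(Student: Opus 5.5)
We will build the packing by a greedy (maximal-separation) selection inside the ``good'' subset of the unit sphere,
\[
        G:=\left\{u\in H:\ \norm{u}_{L^2(M)}=1,\ \norm{u}_{L^\infty(M)}\leq B\right\},
\]
which by hypothesis has normalized surface measure \(\sigma(G)\geq1/2\). The candidate family is \(\mathcal F=\{\varepsilon u:u\in\mathcal N\}\), where \(\mathcal N\subset G\) is a maximal subset with pairwise distances \(\norm{u-v}_{L^2}\geq1\). Then every \(f\in\mathcal F\) has \(\norm{f}_{L^2}=\varepsilon\) and \(\norm{f}_{L^\infty}\leq\varepsilon B\leq1/2\) by the restriction \(\varepsilon\leq(2B)^{-1}\). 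Distinct elements satisfy \(\norm{f-g}_{L^2}=\varepsilon\norm{u-v}_{L^2}\geq\varepsilon\). The only remaining issue is the cardinality.

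For the cardinality, we use maximality. The caps \(C(u,1):=\{v\in S^{m-1}:\norm{v-u}<1\}\), \(u\in\mathcal N\), cover \(G\), so
\[
        \tfrac12\leq\sigma(G)\leq|\mathcal N|\,\sigma(C(\cdot,1)).
\]
A chordal distance \(<1\) corresponds to angle \(<\pi/3\), i.e. \(\langle u,v\rangle>1/2\). The standard cap estimate gives
\[
        \sigma\{v:\langle u,v\rangle>t\}\leq e^{-mt^2/2},
\]
obtained either from concentration of measure on the sphere or from the explicit bound \((1-t^2)^{(m-1)/2}\) for the cap of angular radius \(\arccos t\). With \(t=1/2\) this gives \(\sigma(C)\leq e^{-m/8}\). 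Hence \(|\mathcal N|\geq\tfrac12e^{m/8}\), and \(\log|\mathcal F|\geq m/8-\log2\geq m/16\) once \(m\geq m_0:=16\log 2\) (rounded up). So \(c_0=1/16\) works.

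The step I expect to need the most care is the cap-measure bound with an explicit exponential rate. Here I would use the elementary inequality \(\sigma\{\langle u,v\rangle\geq t\}\leq(1-t^2)^{(m-1)/2}\) for \(0\leq t<1\) (valid for \(m\geq2\)), which gives \((3/4)^{(m-1)/2}\) and hence \(\log|\mathcal F|\geq\frac{m-1}{2}\log\frac43-\log2\geq c_0m\) for \(m\) large. We also note that \(\mathcal N\) is finite, since the sphere is compact and the points are \(1\)-separated, and nonempty since \(G\neq\emptyset\). The uniform distribution and the measure \(\sigma\) are taken with respect to the Hilbert structure of \(H\) via any isometry \(H\cong\R^m\); this is basis-independent, so no further issue arises.
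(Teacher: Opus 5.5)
Your proof is correct, but it takes a different route from the paper's.

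\textbf{The paper's route.} The paper uses the probabilistic method. It draws $N=\lfloor e^{c_2m}\rfloor$ independent uniform points on the unit sphere of $H$. The spherical concentration bound $\PP(\langle A_i,A_j\rangle>1/2)\le 2e^{-c_1m}$, combined with a union bound over pairs, makes all $N$ points pairwise $1$-separated with probability at least $3/4$. A lower-tail Chernoff bound for the indicators $\bbone\{\norm{U_i}_{L^\infty(M)}\le B\}$ then guarantees that at least $N/4$ of the points are good. Intersecting the two events gives a realization, which is then rescaled by $\varepsilon$.

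\textbf{Your route.} You restrict to the good set $G$ from the start and take a maximal $1$-separated subset of it. The hypothesis $\sigma(G)\ge 1/2$ and the cap estimate then combine in the single covering inequality $\tfrac12\le|\mathcal N|\,\sigma(\mathrm{cap})$.

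\textbf{Comparison.} Both proofs rest on the same ingredient: the exponential smallness of the cap $\{\langle u,v\rangle>1/2\}$. Either bound you cite, $e^{-mt^2/2}$ or $(1-t^2)^{(m-1)/2}$, is valid at $t=1/2$. Your version is deterministic and avoids both the union bound and the Chernoff step. It also produces explicit constants, $c_0=1/16$ and $m_0=12$, and it works verbatim whenever $\sigma(G)$ is bounded below by any fixed positive constant. The paper's version shows in addition that a typical i.i.d.\ draw of random directions already gives the packing. That matches how the hypothesis is supplied in the applications, namely by the random sup-norm bounds of Lemmas~\ref{lem:random-window-supnorm} and~\ref{lem:random-eigenspace-supnorm}.

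The one point you leave implicit is that $G$ is measurable. This is already presupposed by the hypothesis, and it holds because $\{u:\norm{u}_{L^\infty(M)}\le B\}$ is closed in $L^2(M)$.
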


\begin{proof}
Choose an orthonormal basis of \(H\), identify its unit sphere with
\(\mathbb S^{m-1}\), and let \(U_1,\ldots,U_N\) be independent copies of
\(U\). For independent uniform vectors \(A_i,A_j\in\mathbb S^{m-1}\), the
standard concentration inequality on the sphere gives
\[
        \PP\!\left(\langle A_i,A_j\rangle>\frac12\right)
        \leq 2e^{-c_1m}
\]
for a universal \(c_1>0\); see, for example,
\cite[Theorem 3.4.5]{Vershynin2026}.
For \(1\leq i<j\leq N\), let
\[
    \mathcal B_{ij}
    :=
    \left\{
        \|U_i-U_j\|_{L^2(M)}<1
    \right\}.
\]
Writing \(U_i=\sum_{k=1}^m A_{i,k}e_k\), orthonormality gives
\[
    \|U_i-U_j\|_{L^2(M)}^2
    =
    2-2\langle A_i,A_j\rangle.
\]
Thus
\[
    \mathcal B_{ij}
    =
    \left\{
        \langle A_i,A_j\rangle>\frac12
    \right\},
\]
and hence
\[
    \mathbb P(\mathcal B_{ij})
    \leq 2e^{-c_1m}.
\]
Therefore, by the union bound over the \(\binom N2\) unordered
pairs,
\[
\begin{aligned}
    \mathbb P\!\left(
        \exists\, i<j:
        \|U_i-U_j\|_{L^2(M)}<1
    \right)
    &\leq
    2\binom N2 e^{-c_1m} \\
    &\leq
    N^2e^{-c_1m}.
\end{aligned}
\]
Choose
\[
    N=\lfloor e^{c_2m}\rfloor
\]
with \(0<c_2<c_1/2\). Then
\[
    N^2e^{-c_1m}
    \leq
    e^{-(c_1-2c_2)m}.
\]
After increasing the universal lower-dimensional threshold
\(m_0\), the last quantity is at most \(1/4\). Consequently,
\[
    \mathbb P\!\left(
        \|U_i-U_j\|_{L^2(M)}\geq1
        \ \text{for all } i\neq j
    \right)
    \geq \frac34.
\]
Set
\[
    Z_i
    :=
    \mathbf 1_{\{\|U_i\|_{L^\infty(M)}\leq B\}},
    \qquad 1\leq i\leq N.
\]
The variables \(Z_1,\dots,Z_N\) are independent Bernoulli random
variables and, by assumption,
\[
    \mathbb E Z_i
    =
    \mathbb P\!\left(\|U_i\|_{L^\infty(M)}\leq B\right)
    \geq \frac12.
\]
Hence
\[
    \mathbb E\!\left[\sum_{i=1}^N Z_i\right]\geq \frac N2.
\]
A lower-tail Chernoff bound therefore gives, for a universal
constant \(c_3>0\),
\[
    \mathbb P\!\left(\sum_{i=1}^N Z_i<\frac N4\right)
    \leq e^{-c_3N}.
\]

Let
\[
    \mathcal E_{\rm sep}
    :=
    \left\{
        \|U_i-U_j\|_{L^2(M)}\geq1
        \text{ for all }i\neq j
    \right\}
\]
and
\[
    \mathcal E_{\rm bd}
    :=
    \left\{
        \sum_{i=1}^N Z_i\geq\frac N4
    \right\}.
\]
The preceding estimates imply
\[
    \mathbb P(\mathcal E_{\rm sep})\geq\frac34,
    \qquad
    \mathbb P(\mathcal E_{\rm bd})\geq1-e^{-c_3N}.
\]
Consequently,
\[
\begin{aligned}
    \mathbb P(\mathcal E_{\rm sep}\cap\mathcal E_{\rm bd})
    &\geq
    1-\mathbb P(\mathcal E_{\rm sep}^c)
      -\mathbb P(\mathcal E_{\rm bd}^c)\\
    &\geq
    \frac34-e^{-c_3N}>0
\end{aligned}
\]
for all sufficiently large \(m\), since
\(N=\lfloor e^{c_2m}\rfloor\to\infty\).
Thus there exists a realization for which all \(U_i\)'s are
pairwise \(L^2\)-separated and at least \(N/4\) of them satisfy
\(\|U_i\|_{L^\infty(M)}\leq B\).
\end{proof}

\begin{lemma}[Random eigenspace sup-norm bound]
\label{lem:random-eigenspace-supnorm}
Let \(j\geq1\), and suppose
\[
        \kappa_j:=\sup_{x\in M}K_j(x,x)\leq C_Kd_j.
\]
If \(U_j\) is uniformly distributed on the unit sphere of \(E_j\), then there
exist constants \(C>0\) and \(\mu_\star\geq1\), depending only on
\((M,g)\) and \(C_K\), such that, whenever \(\mu_j\geq\mu_\star\),
\begin{equation}
\label{eq:random-eigenspace-supnorm}
        \PP\!\left(
        \norm{U_j}_{L^\infty(M)}
        \leq C\sqrt{\log(2+\mu_j)}
        \right)
        \geq\frac12.
\end{equation}
\end{lemma}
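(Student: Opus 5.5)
The plan is a standard random-vector sup-norm argument: a pointwise Gaussian-type tail from spherical concentration, followed by a union bound over a finite net of $M$, with the net transfer controlled by a Bernstein-type gradient bound for eigenfunctions.

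\textbf{Step 1: pointwise tail.} Fix an orthonormal basis $\{\phi_{j,\ell}\}$ of $E_j$ and write $U_j=\sum_\ell A_\ell\phi_{j,\ell}$ with $A$ uniform on $\mathbb S^{d_j-1}$. For each $x\in M$ we have $U_j(x)=\langle A,\Phi_j(x)\rangle$ and $\|\Phi_j(x)\|_2^2=K_j(x,x)\le C_Kd_j$. Lévy concentration for the linear functional $A\mapsto\langle A,v\rangle$ gives
\[
\PP\bigl(|U_j(x)|>t\bigr)\le 2\exp\!\bigl(-c\,d_jt^2/\|\Phi_j(x)\|_2^2\bigr)\le 2e^{-c t^2/C_K},
\]
uniformly in $x$ and $j$. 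The diagonal assumption is used only here; it converts the dimension-normalized variance into an $O(1)$ sub-Gaussian constant.

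\textbf{Step 2: discretization.} Every $u\in E_j$ with $\|u\|_{L^2}=1$ satisfies $\|\nabla u\|_\infty\le C(1+\mu_j)^{1/2}\|u\|_\infty$. This is a Bernstein inequality for spectral clusters, available e.g.\ from \cite{shi2010gradient}, or via elliptic estimates applied to $\sqrt{\mu_j}$-rescaled balls. Let $x^\ast$ be a point where $|U_j|$ attains its maximum. Every $y$ with $d(y,x^\ast)\le r:=(2C)^{-1}(1+\mu_j)^{-1/2}$ then satisfies $|U_j(y)|\ge\tfrac12\|U_j\|_\infty$. Take an $r$-net $N_r\subset M$; by compactness and bounded geometry, $|N_r|\le C_M r^{-m}\le C'(1+\mu_j)^{m/2}$. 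Hence $\|U_j\|_\infty\le 2\max_{z\in N_r}|U_j(z)|$.

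\textbf{Step 3: union bound.} Combining the two steps,
\[
\PP\bigl(\|U_j\|_\infty>2t\bigr)\le 2C'(1+\mu_j)^{m/2}e^{-ct^2/C_K}.
\]
Choose $t=C_1\sqrt{\log(2+\mu_j)}$ with $C_1^2c/C_K>m/2+1$. The right-hand side is then at most $2C'(2+\mu_j)^{-1}$, which is $\le\frac12$ once $\mu_j\ge\mu_\star$. This yields \eqref{eq:random-eigenspace-supnorm} with $C=2C_1$.

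\textbf{Main obstacle.} The key difficulty is the gradient (Bernstein) bound in Step 2 on a general compact manifold, with constants depending only on $(M,g)$. The first attempt would be the cited Shi--Xu gradient estimate for eigenfunctions, which gives exactly $\|\nabla u\|_\infty\lesssim\sqrt{\mu_j}\|u\|_\infty$ for $u\in E_j$. If that citation were unavailable, the fallback is to bypass gradients altogether. In that case one would apply the Burq--Lebeau estimate already used in Lemma~\ref{lem:random-window-supnorm}, restricted to a window containing only $E_j$, or discretize via Hörmander's local Weyl/Sobolev bounds, at the cost of a polynomial-in-$\mu_j$ net. Any polynomial net size suffices, since only the logarithm of the net size enters.
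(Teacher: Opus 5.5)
Your argument is correct and has the same skeleton as the paper's proof: a pointwise sub-Gaussian tail from spherical concentration using $K_j(x,x)\le C_Kd_j$, a deterministic net, a union bound, and the Shi--Xu gradient estimate. Your discretization step, however, is genuinely different.

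The paper turns Shi--Xu into an \emph{absolute} Lipschitz bound $\|\nabla U_j\|_\infty\le C(1+\mu_j)^{(m+1)/4}$ by combining it with Sogge's $L^\infty$ eigenfunction bound. It then takes a net of mesh $(1+\mu_j)^{-(m+1)/4}$ and cardinality $\lesssim(1+\mu_j)^{m(m+1)/4}$. This gives the additive transfer $\|U_j\|_\infty\le\max_{\mathcal N_j}|U_j|+C_{\mathrm{Lip}}$, and the constant is afterwards absorbed into $C\sqrt{\log(2+\mu_j)}$.

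You instead use the fact that the Shi--Xu inequality is already \emph{relative} to $\|u\|_\infty$. Evaluating near a maximizer gives the multiplicative transfer $\|U_j\|_\infty\le 2\max_{N_r}|U_j|$ on a net of mesh $\asymp(1+\mu_j)^{-1/2}$ and size $\lesssim(1+\mu_j)^{m/2}$. Because the net is fixed in advance, this holds for every realization.

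What each route buys:
\begin{itemize}
\item Yours needs no a priori sup-norm input, neither Sogge's bound nor the reproducing-kernel bound $\|u\|_\infty\le\sqrt{\kappa_j}$, and it uses a smaller net.
\item The paper's additive transfer is a bit more robust, since it works with any deterministic Lipschitz bound, not only one proportional to $\|u\|_\infty$.
\item Only the logarithm of the net size enters, so both reach the same $\sqrt{\log(2+\mu_j)}$ scale.
\end{itemize}

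Your Burq--Lebeau fallback is unnecessary. Applying their theorem verbatim to a window containing a single eigenspace would also require checking their window-width hypotheses, which a single eigenspace need not satisfy on a general manifold.
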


\begin{proof}
Let \(e_1,\ldots,e_{d_j}\) be an orthonormal basis of \(E_j\), and write
\[
        U_j=\sum_{\ell=1}^{d_j}a_\ell e_\ell,
        \qquad
        a=(a_1,\ldots,a_{d_j})\sim\operatorname{Unif}(\mathbb S^{d_j-1}).
\]
For fixed \(x\in M\), the coefficient vector
\(v(x)=(e_1(x),\ldots,e_{d_j}(x))\) satisfies
\[
        \norm{v(x)}_2^2=K_j(x,x)\leq C_Kd_j.
\]
Concentration of a linear functional on the sphere therefore gives
\begin{equation}
\label{eq:fixed-point-spherical-tail}
        \PP\!\left(|U_j(x)|>t\right)
        \leq 2e^{-c t^2/C_K},
        \qquad t\geq0,
\end{equation}
for a universal constant \(c>0\).

We next obtain a deterministic Lipschitz bound directly for \(U_j\), which
is itself an \(L^2\)-normalized eigenfunction in \(E_j\). The main theorem of
Shi and Xu~\cite{shi2010gradient} gives
\[
        \|\nabla U_j\|_{L^\infty(M)}
        \leq
        C(1+\mu_j)^{1/2}\|U_j\|_{L^\infty(M)},
\]
while the standard \(L^\infty\) eigenfunction estimate, namely the
\(p=\infty\) case of Sogge's spectral-cluster bounds
\cite{Sogge1988SpectralClusters}, gives
\[
        \|U_j\|_{L^\infty(M)}
        \leq C(1+\mu_j)^{(m-1)/4}\|U_j\|_{L^2(M)},
        \qquad m=\dim M.
\]
Since \(\|U_j\|_{L^2(M)}=1\),
\begin{equation}
\label{eq:random-eigenfunction-lipschitz}
        \norm{\nabla U_j}_{L^\infty(M)}
        \leq
        C(1+\mu_j)^{(m+1)/4}.
\end{equation}

Let
\[
        r_j=(1+\mu_j)^{-(m+1)/4},
\]
and let \(\mathcal N_j\) be a maximal \(r_j\)-separated subset of
\(M\). Then \(\mathcal N_j\) is an \(r_j\)-net, while the geodesic
balls
\[
        \left\{
        B_g(x,r_j/2):x\in\mathcal N_j
        \right\}
\]
are pairwise disjoint. Since, uniformly in \(x\in M\),
\[
        \nu(B_g(x,r))\ge c r^m
\]
for all sufficiently small \(r\), and since \(\nu(M)=1\), it follows
that
\[
        |\mathcal N_j|
        \le C r_j^{-m}
        =
        C(1+\mu_j)^{m(m+1)/4}.
\]
By \eqref{eq:fixed-point-spherical-tail} and a union bound,
\[
        \PP\!\left(
        \max_{x\in\mathcal N_j}|U_j(x)|>t
        \right)
        \leq
        C(1+\mu_j)^{m(m+1)/4}e^{-ct^2/C_K}.
\]

Set
\[
        t_j:=C_0\sqrt{\log(2+\mu_j)}.
\]
Choosing \(C_0\) sufficiently large, and then increasing
\(\mu_\star\) if necessary, the preceding union bound gives
\[
        \PP\!\left(
        \max_{y\in\mathcal N_j}|U_j(y)|\leq t_j
        \right)
        \geq \frac12,
        \qquad \mu_j\geq\mu_\star.
\]

It remains to pass from the net to all of \(M\). Let
\(C_{\mathrm{Lip}}\) denote the constant in
\eqref{eq:random-eigenfunction-lipschitz}, so that
\[
        \|\nabla U_j\|_{L^\infty(M)}
        \leq
        C_{\mathrm{Lip}}(1+\mu_j)^{(m+1)/4}.
\]
For every \(x\in M\), choose \(y_x\in\mathcal N_j\) such that
\(d_g(x,y_x)\leq r_j\). Integrating the gradient of \(U_j\)
along a minimizing geodesic joining \(y_x\) to \(x\), we obtain
\[
\begin{aligned}
        |U_j(x)-U_j(y_x)|
        &\leq
        d_g(x,y_x)\,
        \|\nabla U_j\|_{L^\infty(M)}\\
        &\leq
        r_j C_{\mathrm{Lip}}
        (1+\mu_j)^{(m+1)/4}\\
        &=
        C_{\mathrm{Lip}},
\end{aligned}
\]
where the last equality follows from
\(r_j=(1+\mu_j)^{-(m+1)/4}\). Consequently,
\[
        \|U_j\|_{L^\infty(M)}
        \leq
        \max_{y\in\mathcal N_j}|U_j(y)|
        +C_{\mathrm{Lip}}.
\]
Hence, on the preceding event,
\[
        \|U_j\|_{L^\infty(M)}
        \leq
        C_0\sqrt{\log(2+\mu_j)}
        +C_{\mathrm{Lip}}.
\]
Since
\[
        \sqrt{\log(2+\mu_j)}\geq\sqrt{\log 2},
\]
the additive constant can be absorbed into the multiplicative
constant:
\[
        C_0\sqrt{\log(2+\mu_j)}+C_{\mathrm{Lip}}
        \leq
        \left(
        C_0+\frac{C_{\mathrm{Lip}}}{\sqrt{\log 2}}
        \right)
        \sqrt{\log(2+\mu_j)}.
\]
Thus, with
\[
        C:=
        C_0+\frac{C_{\mathrm{Lip}}}{\sqrt{\log 2}},
\]
we conclude that
\[
        \PP\!\left(
        \|U_j\|_{L^\infty(M)}
        \leq C\sqrt{\log(2+\mu_j)}
        \right)
        \geq\frac12,
\]
which proves \eqref{eq:random-eigenspace-supnorm}.
\end{proof}

 \paragraph{How the auxiliary results fit together}
Both lower bounds are obtained by perturbing the uniform density as
\(p_f=1+f\), where \(f\) belongs to a high-dimensional spectral
subspace orthogonal to the constants.  The main difficulty is to find
exponentially many perturbations that are well separated in
\(L^2(M)\), while remaining uniformly small enough that
\(1/2\leq p_f\leq 3/2\).

Lemmas~\ref{lem:random-window-supnorm} and~ \ref{lem:random-eigenspace-supnorm} provide the required pointwise control in the two
spectral geometries relevant below.  Lemma~\ref{lem:random-window-supnorm} shows that a random
\(L^2\)-unit vector in a fixed-width spectral window has
\(L^\infty\)-norm of order \(\sqrt{\log(2+\Lambda)}\) with probability
bounded away from zero.  Lemma~\ref{lem:random-eigenspace-supnorm} gives the analogous conclusion for
a random unit vector in a single eigenspace, under the
projector-diagonal condition
\(\sup_x K_j(x,x)\lesssim d_j\).

Proposition~\ref{prop:bounded-spherical-packing} converts either probabilistic statement into a
deterministic testing family.  If a positive fraction of the unit
sphere of an \(m\)-dimensional subspace has \(L^\infty\)-norm at most
\(B\), then it extracts an \(L^2\)-separated packing of cardinality
exponential in \(m\); after scaling by
\(\varepsilon\leq (2B)^{-1}\), every packing element has
\(L^\infty\)-norm at most \(1/2\).  Theorem~ \ref{thm:minimax-lower-dense} applies this mechanism
to a window containing many eigenspaces, whereas Theorem~\ref{thm:minimax-lower-rough} applies
it inside one high-multiplicity eigenspace.  The remainder of each
proof chooses the frequency \(\Lambda\) and amplitude \(\varepsilon\)
so that the block-variation constraint and the Fano
Kullback--Leibler condition are simultaneously balanced.

\subsection{Proof of Theorem~\ref{thm:minimax-lower-dense}}

\begin{proof}
Let
\[
        D(\Lambda)
        :=
        \sum_{\mu_j\leq\Lambda}d_j.
\]
Since \(D(\Lambda)\to\infty\), the upper growth assumption implies
\(\alpha+\gamma>0\). Combining the upper and lower cumulative growth bounds,
we may choose a constant \(c_0\in(0,1)\) and constants
\(c_{\mathrm w},\Lambda_0>0\) such that
\begin{equation}
\label{eq:dense-window-dimension}
        \sum_{j:\,c_0\Lambda<\mu_j\leq\Lambda}d_j
        \geq
        c_{\mathrm w}(1+\Lambda)^{\alpha+\gamma},
        \qquad \Lambda\geq\Lambda_0.
\end{equation}
Indeed, choose \(c_0\) sufficiently small that the upper bound for
\(D(c_0\Lambda)\) is at most one half of the lower bound for \(D(\Lambda)\)
for all sufficiently large \(\Lambda\). Increase \(\Lambda_0\), if necessary,
so that \(c_0\Lambda\) is above the high-frequency threshold in
Assumption~\ref{ass:poly-lower-growth} whenever \(\Lambda\geq\Lambda_0\).

Let
\[
        \cJ_\Lambda
        :=
        \{j\geq1:c_0\Lambda<\mu_j\leq\Lambda\},
        \qquad
        \Hcal_\Lambda
        :=
        \bigoplus_{j\in\cJ_\Lambda}E_j,
        \qquad
        m_\Lambda:=\dim\Hcal_\Lambda.
\]
By \eqref{eq:dense-window-dimension},
\begin{equation}
\label{eq:dense-window-total-dimension}
        m_\Lambda\geq c(1+\Lambda)^{\alpha+\gamma}.
\end{equation}
Moreover, the individual lower bound in
Assumption~\ref{ass:poly-lower-growth} and the cumulative upper bound in
Assumption~\ref{ass:poly-growth} imply
\begin{equation}
\label{eq:dense-window-block-count}
        |\cJ_\Lambda|
        \leq C(1+\Lambda)^\alpha.
\end{equation}

Let \(U_\Lambda\) be uniform on the unit sphere of \(\Hcal_\Lambda\). By
Lemma~\ref{lem:random-window-supnorm},
\[
        \PP\!\left(
        \norm{U_\Lambda}_{L^\infty(M)}
        \leq B_\Lambda
        \right)
        \geq\frac12,
        \qquad
        B_\Lambda:=C\sqrt{\log(2+\Lambda)}.
\]
For \(m_\Lambda\) sufficiently large, Proposition~\ref{prop:bounded-spherical-packing}
therefore supplies, for every
\(0<\varepsilon\leq(2B_\Lambda)^{-1}\), a set
\(\mathcal F_\Lambda\subset\Hcal_\Lambda\) such that
\begin{equation}
\label{eq:dense-packing-size}
        \log|\mathcal F_\Lambda|
        \geq c(1+\Lambda)^{\alpha+\gamma},
\end{equation}
all \(f\in\mathcal F_\Lambda\) satisfy
\[
        \norm{f}_{L^2(M)}=\varepsilon,
        \qquad
        \norm{f}_{L^\infty(M)}\leq\frac12,
\]
and distinct packing elements are separated by at least \(\varepsilon\) in
\(L^2(M)\).

For \(f\in\mathcal F_\Lambda\), set \(p_f=1+f\), and also set \(p_0=1\).
Since $\Hcal_\Lambda$ is contained in the spectral subspace associated
with positive eigenvalues, it is orthogonal in $L^2(M)$ to the constant
functions and satisfies
\[
    \int_M f\,d\nu=\langle f,1\rangle_{L^2(M)}=0.
\]
Since every $f\in\Hcal_\Lambda$ has mean zero and satisfies
$\|f\|_\infty\leq 1/2$, the function $p_f:=1+f$ obeys
\[
    \int_M p_f\,d\nu=1
    \qquad\text{and}\qquad
    \frac12\leq p_f\leq\frac32.
\]
Hence $p_f$ is a probability density.

 By orthogonality,
\begin{align}
        \norm{p_f}_{\Vspec_s(M)}
        &=
        1+
        \sum_{j\in\cJ_\Lambda}
        (1+\mu_j)^{s/2}\norm{P_jf}_{L^2(M)}
        \notag\\
        &\leq
        1+(1+\Lambda)^{s/2}|\cJ_\Lambda|^{1/2}
        \left(
        \sum_{j\in\cJ_\Lambda}\norm{P_jf}_{L^2(M)}^2
        \right)^{1/2}
        \notag\\
        &\leq
        1+C\varepsilon(1+\Lambda)^{(s+\alpha)/2}.
\label{eq:dense-packing-variation}
\end{align}
Furthermore,
\begin{equation}
\label{eq:dense-packing-kl}
        \KL\!\left(p_f^{\otimes n},p_0^{\otimes n}\right)
        \leq
        n\chi^2(p_f,p_0)
        =n\varepsilon^2.
\end{equation}

Set
\[
        q_{\mathrm d}:=s+\gamma+2\alpha,
        \qquad
        1+\Lambda_n\asymp(nA_R^2)^{1/q_{\mathrm d}},
        \qquad
        \varepsilon_n
        :=
        c_\varepsilon A_R(1+\Lambda_n)^{-(s+\alpha)/2},
\]
where \(c_\varepsilon>0\) is a sufficiently small constant. For all
sufficiently large \(n\), the window lies above \(\Lambda_0\), its dimension
exceeds the universal threshold in Proposition~\ref{prop:bounded-spherical-packing},
and
\[
        \varepsilon_nB_{\Lambda_n}\longrightarrow0.
\]
Thus the packing construction applies. By
\eqref{eq:dense-packing-variation}, choosing \(c_\varepsilon\) small makes all
alternatives belong to \(\Pcal_s(R,1/2,3/2)\). Moreover,
\begin{align*}
        \frac{n\varepsilon_n^2}{\log|\mathcal F_{\Lambda_n}|}
        &\leq
        Cc_\varepsilon^2
        nA_R^2(1+\Lambda_n)^{-(s+2\alpha+\gamma)}
        \leq C'c_\varepsilon^2.
\end{align*}
Fix \(\eta_0=1/16\). After reducing \(c_\varepsilon\) once more, the
Kullback--Leibler condition in Theorem~\ref{thm:fano-kl} holds with
\(\eta=\eta_0\). Applying that theorem with separation radius
\(\varepsilon_n/2\) yields
\[
        \mathfrak R_n\bigl(\Pcal_s(R,1/2,3/2)\bigr)
        \geq c\varepsilon_n^2.
\]
Finally,
\begin{align*}
        \varepsilon_n^2
        &\asymp
        A_R^2(nA_R^2)^{-(s+\alpha)/(s+\gamma+2\alpha)}\\
        &=
        A_R^{\frac{2(\alpha+\gamma)}{s+\gamma+2\alpha}}
        n^{-\frac{s+\alpha}{s+\gamma+2\alpha}},
\end{align*}
which proves \eqref{eq:dense-minimax-lower}. 

The class constraint and the Kullback--Leibler condition require only
\[
        Cc_\varepsilon\leq 1
        \qquad\text{and}\qquad
        C'c_\varepsilon^2\leq\eta_0,
\]
respectively. Hence \(c_\varepsilon\), and therefore the multiplicative
constant in the lower bound, can be chosen independently of \(R\).
By contrast, the eventual conditions that the spectral window lie above
\(\Lambda_0\), have sufficiently large dimension, and satisfy
\(\varepsilon_nB_{\Lambda_n}\leq1/2\) may require
\(n\geq n_0(R)\).
\end{proof}

\subsection{Proof of Theorem~\ref{thm:minimax-lower-rough}}

\begin{proof}
Let \(m=\dim M\), and let
\[
        D(\Lambda)=\sum_{\mu_j\leq\Lambda}d_j
\]
denote the eigenvalue counting function, with multiplicities.
 By H\"ormander's local Weyl law
\cite[Theorem~5.1, in particular equation~(5.3)]{hormander1968spectral}, applied to
$I-\Delta_g$ and integrated over $M$, the eigenvalue counting
function satisfies
\[
    D(\Lambda)
    =
    \frac{\omega_m\Vol_g(M)}{(2\pi)^m}\Lambda^{m/2}
    +O\!\left(\Lambda^{(m-1)/2}\right),
    \qquad \Lambda\to\infty.
\]
In particular,
\begin{equation}
\label{eq:full-weyl-asymptotic}
        D(\Lambda)
        =
        \frac{\omega_m\operatorname{Vol}_g(M)}{(2\pi)^m}
        \Lambda^{m/2}
        +o\!\left(\Lambda^{m/2}\right),
        \qquad \Lambda\to\infty,
\end{equation}
where \(\omega_m\) is the Euclidean volume of the unit ball in
\(\mathbb R^m\), and \(\operatorname{Vol}_g(M)\) is the unnormalized
Riemannian volume of \(M\). Consequently,
\[
        D(\Lambda)-D(\Lambda/2)
        =
        \frac{\omega_m\operatorname{Vol}_g(M)}{(2\pi)^m}
        \left(1-2^{-m/2}\right)\Lambda^{m/2}
        +o\!\left(\Lambda^{m/2}\right)>0
\]
for every sufficiently large \(\Lambda\). Hence the interval
\((\Lambda/2,\Lambda]\) contains at least one nonconstant eigenvalue; choose
an index \(j_\Lambda\) such that
\[
        \frac\Lambda2<\mu_{j_\Lambda}\leq\Lambda.
\]
For all sufficiently large \(\Lambda\), one has
\(\mu_{j_\Lambda}>\Lambda/2\) above the high-frequency threshold in
Assumption~\ref{ass:single-block-lower}. That assumption then gives
\begin{equation}
\label{eq:rough-block-dimension}
        d_{j_\Lambda}\geq c(1+\Lambda)^\gamma,
        \qquad
        \kappa_{j_\Lambda}\leq C_Kd_{j_\Lambda}.
\end{equation}
Let \(U_\Lambda\) be uniformly distributed on the unit sphere of
\(E_{j_\Lambda}\). Lemma~\ref{lem:random-eigenspace-supnorm} gives
\[
        \PP\!\left(
        \norm{U_\Lambda}_{L^\infty(M)}
        \leq B_\Lambda
        \right)
        \geq\frac12,
        \qquad
        B_\Lambda:=C\sqrt{\log(2+\Lambda)}.
\]
For all sufficiently large \(\Lambda\), Proposition~\ref{prop:bounded-spherical-packing}
therefore yields, for each
\(0<\varepsilon\leq(2B_\Lambda)^{-1}\), a set
\(\mathcal F_\Lambda\subset E_{j_\Lambda}\) with
\begin{equation}
\label{eq:rough-packing-size}
        \log|\mathcal F_\Lambda|
        \geq c(1+\Lambda)^\gamma,
\end{equation}
whose elements have \(L^2\)-norm \(\varepsilon\), are uniformly bounded by
\(1/2\), and are pairwise \(\varepsilon\)-separated in \(L^2(M)\).

For \(f\in\mathcal F_\Lambda\), put \(p_f=1+f\), and take \(p_0=1\).
These functions are densities in \([1/2,3/2]\), and
\begin{equation}
\label{eq:rough-packing-variation}
        \norm{p_f}_{\Vspec_s(M)}
        =
        1+(1+\mu_{j_\Lambda})^{s/2}\varepsilon
        \leq
        1+C(1+\Lambda)^{s/2}\varepsilon.
\end{equation}
As in \eqref{eq:dense-packing-kl},
\begin{equation}
\label{eq:rough-packing-kl}
        \KL\!\left(p_f^{\otimes n},p_0^{\otimes n}\right)
        \leq n\varepsilon^2.
\end{equation}

Set
\[
        q_{\mathrm r}:=s+\gamma,
        \qquad
        1+\Lambda_n\asymp(nA_R^2)^{1/q_{\mathrm r}},
        \qquad
        \varepsilon_n
        :=
        c_\varepsilon A_R(1+\Lambda_n)^{-s/2},
\]
with \(c_\varepsilon>0\) sufficiently small. Since \(s>0\),
\(\varepsilon_nB_{\Lambda_n}\to0\), so the packing is available for all
sufficiently large \(n\). Equation~\eqref{eq:rough-packing-variation} places
all alternatives in \(\Pcal_s(R,1/2,3/2)\), and
\[
        \frac{n\varepsilon_n^2}{\log|\mathcal F_{\Lambda_n}|}
        \leq
        Cc_\varepsilon^2
        nA_R^2(1+\Lambda_n)^{-(s+\gamma)}
        \leq C'c_\varepsilon^2.
\]
Fix \(\eta_0=1/16\). Choosing \(c_\varepsilon\) small enough makes the
Kullback--Leibler condition in Theorem~\ref{thm:fano-kl} hold with
\(\eta=\eta_0\). Applying that theorem with separation radius
\(\varepsilon_n/2\) gives
\[
        \mathfrak R_n\bigl(\Pcal_s(R,1/2,3/2)\bigr)
        \geq c\varepsilon_n^2.
\]
Finally,
\begin{align*}
        \varepsilon_n^2
        &\asymp
        A_R^2(nA_R^2)^{-s/(s+\gamma)}\\
        &=
        A_R^{\frac{2\gamma}{s+\gamma}}
        n^{-\frac{s}{s+\gamma}},
\end{align*}
which proves \eqref{eq:rough-minimax-lower}. As in the dense-window
argument, the constant in the lower bound is independent of \(R\), while the
threshold \(n_0(R)\) may depend on \(R\).
\end{proof}

\section{Technical details for the positive likelihood extension}
\label{app:positive-likelihood}

\subsection{KL--\texorpdfstring{\(L^2\)}{L2} equivalence on bounded log-density paths}

The following standard consequence of the Bregman representation of
Kullback--Leibler divergence for exponential families is recorded with
explicit constants because it is used in both directions below; see, for
example, \cite[Proposition~3.1 and Section~5.2.2]{wainwright2008graphical},
and \cite{barron_sheu_1991} for related bounded-log-density
inequalities.

\begin{lemma}[KL--\(L^2\) equivalence on bounded sieves]
\label{lem:kl-l2-equivalence}
Assume that \(\nu(M)=1\).  Let \(u,v\) be mean-zero functions with
\(\|u\|_\infty\le B\) and \(\|v\|_\infty\le B\).  Then
\[
        \frac{e^{-2B}}2\|u-v\|_{L^2(M)}^2
        \le \KL(p_v,p_u)
        \le \frac{e^B}2\|u-v\|_{L^2(M)}^2.
\]
More generally, if along \(v_t=v+t(u-v)\) the densities satisfy
\(0<a_B\le p_{v_t}\le b_B<\infty\) for \(0\le t\le1\), then
\[
        \frac{a_B}2\|u-v\|_{L^2(M)}^2
        \le \KL(p_v,p_u)
        \le \frac{b_B}2\|u-v\|_{L^2(M)}^2.
\]
\end{lemma}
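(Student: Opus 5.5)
The plan is to use the Bregman representation of the Kullback--Leibler divergence for the exponential family \(p_u=e^{u-A(u)}\) and a second-order Taylor expansion of \(A\) along the segment \(v_t=v+t(u-v)\).

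\textbf{Step 1: reduce KL to a Bregman divergence.} Since \(\log p_v-\log p_u=(v-u)-A(v)+A(u)\), we have
\[
\KL(p_v,p_u)=A(u)-A(v)-\int_M(u-v)\,p_v\,d\nu .
\]
The Gateaux derivative of \(A\) at \(v\) in direction \(h\) is \(\int_M h\,p_v\,d\nu\). Differentiation under the integral is justified because all functions involved are bounded. Hence \(\KL(p_v,p_u)=A(u)-A(v)-DA(v)[u-v]\), which is exactly the Bregman divergence of \(A\).

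\textbf{Step 2: Taylor's formula with integral remainder.} Write \(h=u-v\) and \(\phi(t)=A(v_t)\). Then \(\phi''(t)=\operatorname{Var}_{p_{v_t}}(h)\), and
\[
\KL(p_v,p_u)=\int_0^1(1-t)\operatorname{Var}_{p_{v_t}}(h)\,dt .
\]
Since \(\int_0^1(1-t)\,dt=1/2\), it suffices to show that for every \(t\),
\[
a_B\|h\|_2^2\le\operatorname{Var}_{p_{v_t}}(h)\le b_B\|h\|_2^2 .
\]

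\textbf{Step 3: bound the variance.} This is the main point, and the mean-zero assumption on \(u\) and \(v\) is what makes it work.

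For the upper bound, \(\operatorname{Var}_{p}(h)\le\int h^2p\,d\nu\le b_B\|h\|_2^2\).

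For the lower bound, write \(\operatorname{Var}_p(h)=\min_c\int(h-c)^2p\,d\nu\). This is at least \(a_B\min_c\|h-c\|_2^2=a_B\|h-\textstyle\int h\,d\nu\|_2^2\). Because \(h\) has \(\nu\)-mean zero, this equals \(a_B\|h\|_2^2\).

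This proves the general statement.

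\textbf{Step 4: the explicit constants.} Here \(\|v_t\|_\infty\le B\) by convexity. Since \(\nu(M)=1\), Jensen's inequality gives \(A(v_t)\ge\int_M v_t\,d\nu=0\), and also \(A(v_t)\le B\). Therefore
\[
e^{-2B}\le p_{v_t}=e^{v_t-A(v_t)}\le e^{B}.
\]
So we may take \(a_B=e^{-2B}\) and \(b_B=e^B\), which gives the stated inequality.

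I expect no real obstacle. The only care needed is to justify differentiating \(A\) twice under the integral sign, which is routine dominated convergence for bounded \(h\), and to invoke the mean-zero centering in the variance lower bound.
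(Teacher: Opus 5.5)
Your proposal is correct and follows the paper's proof essentially step for step. Both arguments use the Bregman identity \(\KL(p_v,p_u)=A(u)-A(v)-\int_M(u-v)p_v\,d\nu\) and the Taylor integral formula \(\int_0^1(1-t)\operatorname{Var}_{p_{v_t}}(h)\,dt\). They then bound the variance two-sidedly through \(\inf_c\int_M(h-c)^2p_{v_t}\,d\nu\) together with the \(\nu\)-mean-zero property of \(h\), and finally use \(0\le A(v_t)\le B\) to obtain \(a_B=e^{-2B}\) and \(b_B=e^{B}\).
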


\begin{proof}
Let \(h=u-v\). Since \(u\) and \(v\) are mean-zero, \(h\) is mean-zero.
For \(t\in[0,1]\), set
\[
        v_t=v+th,
        \qquad
        \psi(t)=A(v_t).
\]
Because \(u\) and \(v\) are bounded, \(h\), \(e^{v_t}\), and the first two
\(t\)-derivatives of \(e^{v_t}\) are dominated uniformly on \([0,1]\) by
integrable constants.  Differentiation under the integral sign is therefore
justified, and
\[
        \psi'(t)=\int_M h p_{v_t}\,d\nu,
        \qquad
        \psi''(t)=\operatorname{Var}_{p_{v_t}}(h).
\]
Since
\[
        {\rm KL}(p_v,p_u)
        =
        A(u)-A(v)-\int_M (u-v)p_v\,d\nu,
\]
Taylor's formula gives
\[
        {\rm KL}(p_v,p_u)
        =
        \int_0^1 (1-t)\operatorname{Var}_{p_{v_t}}(h)\,dt .
\]
Now
\[
        \operatorname{Var}_{p_{v_t}}(h)
        =
        \inf_{c\in\mathbb R}
        \int_M (h-c)^2p_{v_t}\,d\nu .
\]
If \(a_B\leq p_{v_t}\leq b_B\), then
\[
        \operatorname{Var}_{p_{v_t}}(h)
        \geq
        a_B\inf_{c\in\mathbb R}\int_M(h-c)^2\,d\nu
        =
        a_B\|h\|_{L^2(M)}^2,
\]
because \(h\) has \(\nu\)-mean zero. Similarly,
\[
        \operatorname{Var}_{p_{v_t}}(h)
        \leq
        \int_M h^2p_{v_t}\,d\nu
        \leq
        b_B\|h\|_{L^2(M)}^2 .
\]
Integrating \(1-t\) over \([0,1]\) gives the stated general bound.

Finally, under \(\|v_t\|_\infty\leq B\) and \(\int_M v_t\,d\nu=0\), Jensen's inequality gives
\[
        0\leq A(v_t)\leq B.
\]
Hence
\[
        e^{-2B}\leq p_{v_t}(x)=e^{v_t(x)-A(v_t)}\leq e^B,
\]
so one may take \(a_B=e^{-2B}\) and \(b_B=e^B\).
\end{proof}

\subsection{Feasibility of block-sparse comparators}
\label{app:positive-feasibility}

Three constants enter the positive-sieve analysis.  The true centered
log-density satisfies \(\|u_0\|_\infty\le B\).  The estimator may be optimized
over a larger sieve radius \(B_\star\), because spectral projectors need not be
contractions on \(L^\infty(M)\).  Finally,
\[
        b_0:=\sup_{u_0\in\mathcal U_q(R,B)}
        \|p_{u_0}\|_{L^\infty(M)}
        \le e^B,
\]
where the inequality follows from \(0\le A(u_0)\le B\).

For completeness, write the growth and projector-envelope assumptions as
\begin{equation}\label{eq:positive-block-growth}
        d_j\le C_d(1+\mu_j)^\gamma,
        \qquad
        \sum_{1\le j:\mu_j\le\Lambda}d_j
        \le C_D(1+\Lambda)^{\alpha+\gamma},
        \qquad \Lambda\ge1,
\end{equation}
and
\begin{equation}\label{eq:positive-envelope}
        \kappa_j:=\sup_{x\in M}K_j(x,x)
        \le C_\kappa(1+\mu_j)^\beta,
        \qquad j\ge1.
\end{equation}
If \(g_j\in E_j\), the reproducing-kernel identity gives
\begin{equation}\label{eq:block-linfty-envelope}
        \|g_j\|_\infty
        \le \kappa_j^{1/2}\|g_j\|_{L^2(M)}.
\end{equation}
Consequently, for \(u_0\in\mathcal U_q(R,B)\), \(q\ge\beta\), and every
\(S\subset\{j\ge1\}\),
\begin{equation}\label{eq:hard-block-feasibility}
        \left\|\sum_{j\in S}P_ju_0\right\|_\infty
        \le C_\kappa^{1/2}
        \sum_{j\in S}(1+\mu_j)^{\beta/2}
             \|P_ju_0\|_{L^2(M)}
        \le C_\kappa^{1/2}R.
\end{equation}
Thus every hard-block comparator is feasible in \(F_{J_n}(B_\star)\) when
\begin{equation}\label{eq:Bstar-choice}
        B_\star\ge\max\{B,C_\kappa^{1/2}R\}.
\end{equation}
On a homogeneous manifold, \(\kappa_j=d_j\), so one may take
\(\beta=\gamma\).  The present hard-block comparator argument therefore yields
the explicit homogeneous rate only for \(q\ge\gamma\).  For
\(0<q<\gamma\), the feasible oracle inequality in
Theorem~\ref{thm:positive-sieve-oracle} remains valid, but this uniform
block-variation comparison is not automatic.  The curvature and risk constants
also depend on the sieve radius \(B_\star\); through
\eqref{eq:Bstar-choice}, this may introduce additional dependence on \(R\).

\subsection{Proof of Theorem~\ref{thm:positive-sieve-oracle}}

\begin{proof}
The compactness of \(F_J(B)\), continuity of the objective, and strict
convexity of \(A\) on the mean-zero space \(F_J\) give a unique measurable
minimizer.  The remaining argument is the standard
basic-inequality/decomposability proof for group penalties and regularized
\(M\)-estimators
\cite{yuan_lin_2006,meier_vandegeer_buhlmann_2008,VanDeGeer2008,negahban_ravikumar_wainwright_yu_2012};
we retain the steps specific to the spectral exponential family.  Put
\[
        \ell_n(v):=-\mathbb P_nv+A(v),
        \qquad
        \ell(v):=-\mathbb P_0v+A(v),
        \qquad
        \mathcal P_\lambda(v):=
        \sum_{j\in J}\lambda_j\|P_jv\|_{L^2}.
\]
For any comparator \(u\in F_J(B)\), optimality and
\(\ell(v)-\ell(u_0)=\KL(p_0,p_v)\) give
\[
        \KL(p_0,\widehat p_J)+4\mathcal P_\lambda(\widehat u_J)
        \leq
        \KL(p_0,p_u)+4\mathcal P_\lambda(u)
        +(\mathbb P_n-\mathbb P_0)(\widehat u_J-u).
\]
Let \(\delta=\widehat u_J-u\) and \(S=S(u)\).  Blockwise
Cauchy--Schwarz and the event \(\mathcal E_J\) imply
\[
        |(\mathbb P_n-\mathbb P_0)\delta|
        \leq\sum_{j\in J}\|Z_j\|_2\|P_j\delta\|_{L^2}
        \leq\mathcal P_\lambda(\delta).
\]
Decomposability gives
\[
        \mathcal P_\lambda(u)-\mathcal P_\lambda(\widehat u_J)
        \leq
        \mathcal P_{\lambda,S}(\delta)
        -\mathcal P_{\lambda,S^c}(\delta),
\]
and therefore
\begin{equation}\label{eq:positive-basic-bound}
        \KL(p_0,\widehat p_J)
        \leq
        \KL(p_0,p_u)+5\mathcal P_{\lambda,S}(\delta).
\end{equation}
Writing \(L_S^2=\sum_{j\in S}\lambda_j^2\), orthogonality yields
\[
        \mathcal P_{\lambda,S}(\delta)
        \leq L_S\|\delta\|_{L^2}.
\]
Set \(K=\KL(p_0,\widehat p_J)\) and
\(a=\|u-u_0\|_{L^2}\).  Lemma~\ref{lem:kl-l2-equivalence} and the triangle
inequality give
\[
        \|\delta\|_{L^2}\leq\sqrt2e^B K^{1/2}+a,
        \qquad
        \KL(p_0,p_u)\leq\frac{e^B}{2}a^2.
\]
Substitution into \eqref{eq:positive-basic-bound} and two applications of
Young's inequality give
\[
        K\leq(e^B+1)a^2+(50e^{2B}+25)L_S^2.
\]
This is \eqref{eq:positive-kl-oracle}, for example with
\(C_B=\max\{e^B+1,50e^{2B}+25\}\).  Finally,
\(h^2(\widehat p_J,p_0)\leq\KL(p_0,\widehat p_J)\); taking the infimum over
\(u\in F_J(B)\) proves \eqref{eq:positive-hellinger-oracle}.
\end{proof}

\subsection{Proof of Corollary~\ref{cor:positive-sieve-rate}}

\begin{proof}
Fix \(u_0\in\mathcal U_q(R,B)\) and write \(p_0=p_{u_0}\).  Set
\[
        r_j:=\|P_ju_0\|_{L^2(M)},
        \qquad
        w_j:=(1+\mu_j)^{q/2}.
\]
Since \(u_0\in\Vspec_q(R)\) and \(P_0u_0=0\),
\begin{equation}\label{eq:positive-block-variation-constraint}
        \sum_{j\ge1}w_jr_j\le R.
\end{equation}

On \(\mathcal E_{J_n}\), Theorem~\ref{thm:positive-sieve-oracle}, applied
with radius \(B_\star\), gives
\[
        \KL(p_{u_0},\widehat p_{J_n})
        \le
        C_{B_\star}
        \inf_{u\in F_{J_n}(B_\star)}
        \left[
        \|u-u_0\|_{L^2(M)}^2
        +\sum_{j\in S(u)}\lambda_j^2
        \right],
\]
where \(S(u):=\{j\in J_n:P_ju\ne0\}\).

For any subset \(S\subset J_n\), define the hard-block comparator
\[
        u_S:=\sum_{j\in S}P_ju_0.
\]
It is mean-zero, and \eqref{eq:hard-block-feasibility} together with the choice
of \(B_\star\) gives \(\|u_S\|_\infty\le B_\star\).  Hence
\(u_S\in F_{J_n}(B_\star)\).  Orthogonality and optimization over all
\(S\subset J_n\) yield
\begin{equation}\label{eq:positive-hard-block-oracle}
        \inf_{u\in F_{J_n}(B_\star)}
        \left[
        \|u-u_0\|_{L^2(M)}^2
        +\sum_{j\in S(u)}\lambda_j^2
        \right]
        \le
        \sum_{j\in J_n}\min\{r_j^2,\lambda_j^2\}
        +\sum_{j\notin J_n}r_j^2.
\end{equation}
The tail is bounded by
\[
        \sum_{j\notin J_n}r_j^2
        \le
        (1+\Lambda_{\max,n})^{-q}
        \left(\sum_{j\notin J_n}w_jr_j\right)^2
        \le
        R^2(1+\Lambda_{\max,n})^{-q}
        \le\rho_n^{\log}(R,\delta_n).
\]
Moreover, by \eqref{eq:lambda-delta-condition},
\[
        \sum_{j\in J_n}\min\{r_j^2,\lambda_j^2\}
        \le
        C
        \sup_{\{s_j\ge0:\,\sum_{j\ge1}w_js_j\le R\}}
        \sum_{j\ge1}\min\{s_j^2,\delta_nd_j\}.
\]
Lemma~\ref{lem:block-variation-oracle-optimization}, applied with smoothness
parameter \(q\), gives
\[
        \sup_{\{s_j\ge0:\,\sum_{j\ge1}w_js_j\le R\}}
        \sum_{j\ge1}\min\{s_j^2,\delta_nd_j\}
        \le C\rho_n^{\log}(R,\delta_n),
\]
where \(\rho_n^{\log}\) is defined in
\eqref{eq:positive-log-rate}.  Combining the preceding displays, on
\(\mathcal E_{J_n}\),
\begin{equation}\label{eq:positive-kl-on-event}
        \KL(p_{u_0},\widehat p_{J_n})
        \le C\rho_n^{\log}(R,\delta_n).
\end{equation}

On the complement event, both \(u_0\) and \(\widehat u_{J_n}\) are mean-zero
and bounded by \(B_\star\).  Lemma~\ref{lem:kl-l2-equivalence} therefore gives
the deterministic bound
\[
        \KL(p_{u_0},\widehat p_{J_n})
        \le
        \frac{e^{B_\star}}2
        \|u_0-\widehat u_{J_n}\|_{L^2(M)}^2
        \le 2e^{B_\star}B_\star^2.
\]
Taking expectations in \eqref{eq:positive-kl-on-event} and using
\eqref{eq:event-complement-rate} proves, uniformly over
\(u_0\in\mathcal U_q(R,B)\),
\[
        \mathbb E_{p_{u_0}}\KL(p_{u_0},\widehat p_{J_n})
        \le C\rho_n^{\log}(R,\delta_n).
\]
for all sufficiently large \(n\).  Since
\(h^2(\widehat p_{J_n},p_{u_0})\le
\KL(p_{u_0},\widehat p_{J_n})\), the same bound holds for the Hellinger risk.

Finally, \(0\le A(v)\) for every mean-zero \(v\), so
\(p_v\le e^{B_\star}\) whenever \(\|v\|_\infty\le B_\star\).  Hence
\[
        \|p-q\|_{L^2(M)}^2
        =\int_M(\sqrt p-\sqrt q)^2(\sqrt p+\sqrt q)^2\,d\nu
        \le C_{B_\star}h^2(p,q)
\]
for \(p=p_v\) and \(q=p_w\) with \(\|v\|_\infty,\|w\|_\infty\le B_\star\).
Applying this to \(\widehat p_{J_n}\) and \(p_{u_0}\) proves the
\(L^2\)-risk bound.  The dependence of the final constant is precisely of the
form stated in Corollary~\ref{cor:positive-sieve-rate}.
\end{proof}

\subsection{Penalty calibration and homogeneous specializations}
\label{app:positive-calibration}

The quantity \(\delta_n\) in Corollary~\ref{cor:positive-sieve-rate} is an
upper bound, per spectral degree of freedom, for the squared block penalties.
By Proposition~\ref{prop:bernstein-threshold}, applied to
\(Z_j=(\mathbb P_n-\mathbb P_0)\Phi_j\), one may choose, for coding levels
\(\xi_j\ge1\),
\[
        \lambda_j
        =K\left[
        \sqrt{\frac{b_0(d_j+\xi_j)}n}
        +(1+\sqrt{b_0})\frac{\sqrt{\kappa_j}\,\xi_j}{n}
        \right],
        \qquad j\in J_n,
\]
with \(K\) sufficiently large.  Uniformly over
\(u_0\in\mathcal U_q(R,B)\),
\[
        \mathbb P_{p_{u_0}}(\mathcal E_{J_n}^c)
        \le\sum_{j\in J_n}e^{-\xi_j},
\]
and
\[
        \lambda_j^2
        \le C\left[
        \frac{b_0(d_j+\xi_j)}n
        +\frac{(1+b_0)\kappa_j\xi_j^2}{n^2}
        \right].
\]
Hence \eqref{eq:lambda-delta-condition} holds whenever
\[
        \delta_n
        \ge C\max_{j\in J_n}
        \left\{
        \frac{b_0(d_j+\xi_j)}{nd_j}
        +\frac{(1+b_0)\kappa_j\xi_j^2}{n^2d_j}
        \right\}.
\]
If \(M\) is homogeneous, \(\kappa_j=d_j\).  If additionally
\[
        \max_{j\in J_n}\xi_j\le L_n,
        \qquad L_n\ge1,
        \qquad L_n^2\le n,
\]
then
\[
        \lambda_j^2\le C\frac{b_0L_n}{n}d_j,
        \qquad
        \delta_n=\frac{b_0L_n}{n}\le\frac{e^BL_n}{n},
\]
up to universal numerical factors.  The event condition
\eqref{eq:event-complement-rate} follows whenever
\[
        \sum_{j\in J_n}e^{-\xi_j}
        =o\!\left(\rho_n^{\log}(R,\delta_n)\right).
\]

\begin{proof}[Proof of Corollary~\ref{cor:positive-homogeneous-rate}]
On a homogeneous manifold, \(\kappa_j=d_j\). Assumption~\ref{ass:poly-growth}
therefore gives the projector-envelope condition with \(\beta=\gamma\) and
\(C_\kappa=C_d\), while the stipulated choice of \(B_\star\) makes every
hard-block comparator feasible. Set \(b_0=e^B\) and use the common code
\(\xi_j=L_n\). For all sufficiently large \(n\), one has \(L_n^2\leq n\),
and the penalty calibration above, together with
\eqref{eq:positive-homogeneous-penalty}, gives
\[
        \lambda_j^2
        \leq C\frac{e^BL_n}{n}d_j,
        \qquad j\in J_n.
\]
Thus \eqref{eq:lambda-delta-condition} holds with
\(\delta_n=e^BL_n/n\). Since \(q\geq\gamma\), the second branch of
\(\rho_{\alpha,\gamma,q}\) applies and equals
\(\rho_n^{\mathrm{hom},+}(R,B)\) up to a structural constant. The cutoff
\eqref{eq:positive-homogeneous-cutoff} gives
\[
        R^2(1+\Lambda_{\max,n})^{-q}
        \leq \rho_n^{\mathrm{hom},+}(R,B),
\]
so the tail condition \eqref{eq:positive-tail-choice} holds.

It remains to verify the event-complement condition. Uniformly over
\(u_0\in\mathcal U_q(R,B)\),
\[
        \mathbb P_{p_{u_0}}(\mathcal E_{J_n}^c)
        \leq \#J_n e^{-L_n}
        \leq C_D(1+\Lambda_{\max,n})^{\alpha+\gamma}n^{-c_0}.
\]
Write \(\vartheta_q=(q+\alpha)/(q+\gamma+2\alpha)\). For fixed
\(R,B\), equations \eqref{eq:positive-homogeneous-rate} and
\eqref{eq:positive-homogeneous-cutoff} imply
\[
        1+\Lambda_{\max,n}
        \lesssim
        n^{\vartheta_q/q}(\log n)^{-\vartheta_q/q}
\]
for all sufficiently large \(n\). Consequently,
\[
        \mathbb P_{p_{u_0}}(\mathcal E_{J_n}^c)
        \lesssim
        n^{-c_0+\vartheta_q(\alpha+\gamma)/q}
        (\log n)^{-\vartheta_q(\alpha+\gamma)/q}
        =o\!\left(\rho_n^{\mathrm{hom},+}(R,B)\right),
\]
where the last step is exactly the stated lower bound on \(c_0\). All
conditions of Corollary~\ref{cor:positive-sieve-rate} are therefore
satisfied, and that corollary proves the three asserted risk bounds.
\end{proof}

No rough-regime branch is asserted, because the present hard-block comparator
argument uses \(q\geq\beta=\gamma\).

For the round sphere \(S^m\), \(\alpha=1/2\) and
\(\gamma=(m-1)/2\), so for \(q\ge(m-1)/2\),
\[
\begin{aligned}
        &\sup_{u_0\in\mathcal U_q(R,B)}
        \mathbb E_{p_{u_0}}\KL(p_{u_0},\widehat p_{J_n})\\
        &\quad\vee
        \sup_{u_0\in\mathcal U_q(R,B)}
        \mathbb E_{p_{u_0}}h^2(\widehat p_{J_n},p_{u_0})\\
        &\quad\vee
        \sup_{u_0\in\mathcal U_q(R,B)}
        \mathbb E_{p_{u_0}}
        \|\widehat p_{J_n}-p_{u_0}\|_{L^2(S^m)}^2\\
        &\qquad\lesssim
        R^{\frac{2m}{2q+m+1}}
        \left(\frac{e^BL_n}{n}\right)^{
        \frac{2q+1}{2q+m+1}}.
\end{aligned}
\]
In particular, on \(S^2\) the exponents are
\(4/(2q+3)\) and \((2q+1)/(2q+3)\).  On \(SO(3)\),
\(\alpha=1/2\) and \(\gamma=1\), so for \(q\ge1\),
\[
\begin{aligned}
        &\sup_{u_0\in\mathcal U_q(R,B)}
        \mathbb E_{p_{u_0}}\KL(p_{u_0},\widehat p_{J_n})\\
        &\quad\vee
        \sup_{u_0\in\mathcal U_q(R,B)}
        \mathbb E_{p_{u_0}}h^2(\widehat p_{J_n},p_{u_0})\\
        &\quad\vee
        \sup_{u_0\in\mathcal U_q(R,B)}
        \mathbb E_{p_{u_0}}
        \|\widehat p_{J_n}-p_{u_0}\|_{L^2(SO(3))}^2\\
        &\qquad\lesssim
        R^{\frac{3}{q+2}}
        \left(\frac{e^BL_n}{n}\right)^{
        \frac{q+1/2}{q+2}}.
\end{aligned}
\]


\section{Hilbert-space Bernstein inequality}

Bousquet's inequality for suprema of empirical processes yields the
following standard form when it is applied to the symmetric class of linear
functionals on a Hilbert space~\cite{bousquet2002bennett}. Related
Banach-space martingale inequalities appear in~\cite{pinelis1994optimum}.

\begin{lemma}[Hilbert-space Bernstein inequality]\label{lem:bousquet-pinelis}
	Let \(H\) be a separable Hilbert space, and let
	\(Y_1,\ldots,Y_n\) be independent mean-zero \(H\)-valued random
	variables. Assume that
	\[
	\|Y_i\|_H \le L \qquad \text{almost surely for all } i .
	\]
	Set
	\[
	S_n=\sum_{i=1}^n Y_i,
	\qquad
	\sigma^2
	=
	\sup_{\|u\|_H\le 1}
	\sum_{i=1}^n
	\mathbb E \langle u,Y_i\rangle_H^2 .
	\]
	Then, for every \(x>0\),
	\[
	\mathbb P\left\{
	\|S_n\|_H
	\ge
	\mathbb E\|S_n\|_H
	+
	\sqrt{2(\sigma^2+2L\mathbb E\|S_n\|_H)x}
	+
	\frac{Lx}{3}
	\right\}
	\le e^{-x}.
	\]
	Consequently, for a universal constant \(C>0\),
	\[
	\mathbb P\left\{
	\|S_n\|_H
	\ge
	C\left[
	\mathbb E\|S_n\|_H
	+
	\sqrt{\sigma^2 x}
	+
	Lx
	\right]
	\right\}
	\le e^{-x}.
	\]
\end{lemma}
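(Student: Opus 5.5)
The plan is to apply Bousquet's concentration inequality for the supremum of an empirical process, with the index class taken to be the unit ball of the dual of \(H\). First, since \(H\) is separable, pick a countable dense subset \(\{u_k\}\) of the unit ball of \(H\). Then
\[
\|S_n\|_H=\sup_k\langle u_k,S_n\rangle_H=\sup_k\sum_{i=1}^n\langle u_k,Y_i\rangle_H .
\]
This writes \(\|S_n\|_H\) as the supremum over a countable class of centered summands \(f_k(Y_i)=\langle u_k,Y_i\rangle_H\). The class is symmetric, because \(-u_k\) also lies in the ball, so the supremum is nonnegative and no absolute values are needed. Each summand satisfies \(|f_k(Y_i)|\le\|Y_i\|_H\le L\) almost surely. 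Its variance sum is at most \(\sigma^2\) by the definition of \(\sigma^2\).

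Bousquet's theorem is stated for identically distributed variables, so I would invoke it in its non-identically-distributed form. Klein--Rio give exactly the same Bennett-type deviation, and that is the route I would cite if needed. After rescaling \(f\mapsto f/L\) so that the summands are bounded by one, the theorem gives, for \(Z=\sup_k\sum_i f_k(Y_i)\),
\[
\mathbb P\bigl\{Z\ge\mathbb EZ+\sqrt{2xv}+x/3\bigr\}\le e^{-x},\qquad v=\sigma^2+2\,\mathbb EZ,
\]
in the normalized units. Undoing the scaling by \(L\) turns \(v\) into \(\sigma^2+2L\,\mathbb E\|S_n\|_H\) and the last term into \(Lx/3\). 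This is the first display.

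For the simplified form, apply \(\sqrt{a+b}\le\sqrt a+\sqrt b\) and the AM--GM inequality:
\[
\sqrt{4L\,\mathbb E\|S_n\|_H\,x}\le\mathbb E\|S_n\|_H+Lx.
\]
This gives \(\|S_n\|_H\le 2\,\mathbb E\|S_n\|_H+\sqrt{2\sigma^2x}+(4/3)Lx\) outside an event of probability at most \(e^{-x}\). Any \(C\ge2\) then works.

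The only delicate point is the applicability of Bousquet's inequality. That requires checking measurability, which the countable dense reduction handles, and the non-i.i.d. setting, which the Klein--Rio version handles. The remaining computations are routine.
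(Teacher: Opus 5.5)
Your argument follows the paper's route. The paper justifies the lemma in one sentence, by citing Bousquet's inequality applied to the symmetric class of linear functionals $y\mapsto\langle u,y\rangle_H$. The following steps of yours are all correct: the countable dense subset of the unit ball, the checks $|\langle u_k,Y_i\rangle_H|\le L$ and $\sum_i\mathbb E\langle u_k,Y_i\rangle_H^2\le\sigma^2$, the rescaling by $L$, and the AM--GM step giving $C=2$. Symmetry of the index set is not actually needed, since density alone gives $\sup_k\langle u_k,S_n\rangle_H=\|S_n\|_H$.

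The step that fails is the claim that Klein--Rio give \emph{exactly the same} deviation bound for independent, non-identically distributed summands.

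\begin{itemize}
\item Bousquet's inequality, with the wimpy variance $\sigma^2=\sup_{\|u\|_H\le1}\sum_i\mathbb E\langle u,Y_i\rangle_H^2$ and linear term $x/3$, is proved for identically distributed variables.
\item Klein and Rio's extension keeps the variance proxy $v=\sigma^2+2L\,\mathbb E\|S_n\|_H$, but its numerical constants are worse. In particular, the linear term becomes $c\,Lx$ with some absolute $c>1/3$, not $Lx/3$.
\item Consequently, for genuinely non-identically distributed $Y_i$, your argument proves the first display only with $Lx/3$ replaced by $c\,Lx$.
\item Alternatively, Bousquet's abstract entropy theorem does apply to independent non-identically distributed variables and keeps $Lx/3$. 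The price is that $\sigma^2$ must be replaced by $\sum_i\sup_{\|u\|_H\le1}\mathbb E\langle u,Y_i\rangle_H^2$, which can be much larger.
\end{itemize}

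This gap does not affect the second display. The Klein--Rio version still yields it, by your AM--GM step with a slightly larger universal $C$. It also does not affect the paper. There the lemma is used only through its second display, and only for $Y_i=\Phi_j(X_i)-\mathbb E\Phi_j(X_i)$ with i.i.d.\ $X_i$, where Bousquet's theorem applies verbatim. The paper's one-line citation carries the same implicit identical-distribution restriction. The clean statement is therefore the first display for identically distributed $Y_i$, and the second display in general.
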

\section{Fano lower bound}
\label{app:fano-lower-bound}

\begin{theorem}[Fano-type lower bound]
\label{thm:fano-kl}
For convenient reference, we restate the form of Fano's lemma given in
\cite[Theorem~2.5]{tsybakov2008nonparametric}.
Let \(N\geq2\), and suppose that a parameter space \(\Theta\), endowed with a
semimetric \(d\), contains elements \(\theta_0,\ldots,\theta_N\) such that
\begin{enumerate}
\item
\(d(\theta_i,\theta_j)\geq2r\) for every pair of distinct indices
\(i,j\in\{0,\ldots,N\}\);
\item
for \(j=1,\ldots,N\), the corresponding probability measures satisfy
\(P_j\ll P_0\) and
\[
        \frac1N\sum_{j=1}^N\KL(P_j,P_0)
        \leq\eta\log N
\]
for some \(0<\eta<1/8\).
\end{enumerate}
Then, for every nondecreasing function \(\psi:\R_+\to\R_+\),
\[
        \inf_{\widehat\theta}
        \sup_{\theta\in\Theta}
        \E_\theta\psi\!\left(d(\theta,\widehat\theta)\right)
        \geq
        \psi(r)
        \frac{\sqrt N}{1+\sqrt N}
        \left(
        1-2\eta-\sqrt{\frac{2\eta}{\log N}}
        \right).
\]
\end{theorem}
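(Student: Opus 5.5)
The plan is to follow the classical reduction from estimation to multiple testing, and then to bound the minimax probability of testing error by a likelihood-ratio argument with threshold $\tau=N^{-1/2}$. That choice of $\tau$ produces exactly the factor $\sqrt N/(1+\sqrt N)$ and the correction $1-2\eta-\sqrt{2\eta/\log N}$.

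\textbf{Reduction to testing.} Since $\psi$ is nondecreasing and nonnegative, Markov-type monotonicity gives
$\E_\theta\psi(d(\theta,\widehat\theta))\ge\psi(r)\,P_\theta(d(\theta,\widehat\theta)\ge r)$. It therefore suffices to show that $\max_{0\le j\le N}P_j(d(\theta_j,\widehat\theta)\ge r)$ is at least the stated factor. Given $\widehat\theta$, I define the minimum-distance test $\psi^\ast=\arg\min_j d(\theta_j,\widehat\theta)$. By the $2r$-separation and the triangle inequality for the semimetric, $\psi^\ast\ne j$ forces $d(\theta_j,\widehat\theta)\ge r$. Hence
$\max_jP_j(d(\theta_j,\widehat\theta)\ge r)\ge p_{e,N}:=\inf_{\phi}\max_jP_j(\phi\ne j)$, the infimum being over all tests with values in $\{0,\dots,N\}$.

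\textbf{Likelihood-ratio bound on $p_{e,N}$.} Fix $\tau\in(0,1)$ and a test $\phi$. Using $\{\phi=0\}\supseteq\bigcup_{j\ge1}\{\phi=j\}^c$-type disjointness, I write
$P_0(\phi\ne0)\ge\sum_{j=1}^NP_0(\phi=j)\ge\tau\sum_{j=1}^NP_j\bigl(\phi=j,\ dP_0/dP_j\ge\tau\bigr)$. Let $q$ denote the maximal error. Then $P_j(\phi=j,\ dP_0/dP_j\ge\tau)\ge P_j(dP_0/dP_j\ge\tau)-q$, and a one-line rearrangement (Tsybakov's Proposition 2.3) yields
\[
p_{e,N}\ge\frac{\tau N}{1+\tau N}\cdot\frac1N\sum_{j=1}^NP_j\!\left(\frac{dP_0}{dP_j}\ge\tau\right).
\]

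\textbf{KL control of the likelihood ratio.} For each $j$, I bound
$P_j(dP_0/dP_j<\tau)=P_j(\log(dP_j/dP_0)>\log(1/\tau))$ by Markov's inequality applied to the positive part of the log-likelihood ratio. This uses the standard estimate $\E_{P_j}[\log(dP_j/dP_0)]_+\le\KL(P_j,P_0)+\sqrt{\KL(P_j,P_0)/2}$, which follows from Pinsker's inequality together with $\E[\log]_-\le\|P_j-P_0\|_{TV}$. Averaging over $j$, using Jensen's inequality on the square root, and applying the assumption $\frac1N\sum_j\KL(P_j,P_0)\le\eta\log N$ gives
\[
\frac1N\sum_jP_j\!\left(\frac{dP_0}{dP_j}\ge\tau\right)\ge1-\frac{\eta\log N+\sqrt{\eta\log N/2}}{\log(1/\tau)}.
\]
Choosing $\tau=N^{-1/2}$ makes $\log(1/\tau)=\tfrac12\log N$. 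This turns the bracket into $1-2\eta-\sqrt{2\eta/\log N}$ and the prefactor into $\sqrt N/(1+\sqrt N)$. Combining the three steps proves the theorem.

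\textbf{Main obstacle.} The only delicate point is the positive-part bound for the log-likelihood ratio, i.e.\ controlling $\E[\log(dP_j/dP_0)]_-$ by total variation and then by KL. I would verify it directly from $\log x\ge1-1/x$ on the set where $dP_j<dP_0$, followed by Pinsker's inequality. The restriction $\eta<1/8$ only ensures that the resulting bound is positive and nontrivial.
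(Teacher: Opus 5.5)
Your proposal is correct and is essentially the argument behind the cited result (the paper gives no proof of its own and relies on \cite[Theorem~2.5]{tsybakov2008nonparametric}): reduction to multiple testing via a minimum-distance test, Tsybakov's Proposition~2.3 with $\tau=N^{-1/2}$, and the second Pinsker bound $\E_{P_j}[\log(dP_j/dP_0)]_+\le\KL(P_j,P_0)+\sqrt{\KL(P_j,P_0)/2}$, averaged over $j$ with Jensen. The only slip is the set inclusion you state, which is false as written. What you actually need, and then use, is that $\{\phi\neq0\}$ is the disjoint union of the events $\{\phi=j\}$, $j\ge1$, so $P_0(\phi\neq0)=\sum_{j\ge1}P_0(\phi=j)$. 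In the subsequent step, $dP_0/dP_j$ should be read as $p_0/p_j$ relative to a common dominating measure, since $P_0\ll P_j$ is not assumed.
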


\section{Additional Experiments}
\label{sec:additional-experiments}

We next illustrate why thresholding complete eigenspaces, rather than
individual coefficients in an arbitrarily chosen eigenbasis, is statistically
consequential. We focus on the degree-$12$ eigenspace $E_{12}$ on
$\mathbb S^2$, whose dimension is $d_{12}=25$. Throughout this experiment,
the target density, the observations, and the degree-$12$ spectral component
are kept fixed. Only the orthonormal basis used to represent this component
is changed.

Let $\theta_{12}\in\mathbb R^{25}$ and
$\widehat\theta_{12}\in\mathbb R^{25}$ denote respectively the true and
empirical coefficient vectors in a fixed real orthonormal basis of $E_{12}$.
For every $k\in\{1,\ldots,25\}$, we construct an orthogonal matrix
$Q_k\in O(25)$ such that
\[
    Q_k^\top\theta_{12}
    =
    \frac{\|\theta_{12}\|_2}{\sqrt{k}}
    \bigl(
        \underbrace{1,\ldots,1}_{k\ \mathrm{entries}},
        0,\ldots,0
    \bigr)^\top.
\]
Hence $k=1$ corresponds to a signal-aligned eigenbasis, in which the whole
degree-$12$ component is supported on a single coordinate, whereas $k=25$
corresponds to a maximally spread representation of the same component.
Importantly, this construction changes only the coefficient representation:
the underlying function and the observations remain unchanged.

As a basis-dependent benchmark, we consider coordinatewise soft thresholding
in the rotated basis,
\[
    \widetilde\theta_{12,Q_k}^{\mathrm{coord}}
    =
    Q_k
    \eta_{\lambda_k}
    \bigl(Q_k^\top\widehat\theta_{12}\bigr),
\]
where
\[
    \eta_\lambda(z)_r
    =
    \operatorname{sign}(z_r)
    \bigl(|z_r|-\lambda\bigr)_+.
\]
We compare this estimator with radial block soft thresholding,
\[
    \widetilde\theta_{12}^{\mathrm{block}}
    =
    \left(
        1-
        \frac{\tau_{12}}{\|\widehat\theta_{12}\|_2}
    \right)_+
    \widehat\theta_{12}.
\]
The latter is unchanged under every $Q_k$, since both the shrinkage factor
and the loss are invariant under orthogonal transformations.

To separate the effect of the eigenbasis from the effect of threshold
calibration, we calibrate the two procedures to the same blockwise null
false-activation probability $\alpha=0.05$. In this diagnostic experiment
only, the block threshold is defined by the empirical quantile as follows
\begin{equation*}
    \tau_{12}^{\mathrm{null}}
    =
    \widehat q_{1-\alpha}
    \left(
        \|\widehat\theta_{12}\|_2
        \,\middle|\,
        p_0\equiv1
    \right),
\end{equation*} 
whereas the coordinatewise threshold for the basis $Q_k$ is
\[
    \lambda_k^{\mathrm{null}}
    =
    \widehat q_{1-\alpha}
    \left(
        \|Q_k^\top\widehat\theta_{12}\|_\infty
        \,\middle|\,
        p_0\equiv1
    \right).
\]
Here $\widehat q_{1-\alpha}$ denotes the empirical $(1-\alpha)$-quantile
computed from independent samples under the uniform density. 

The thresholds are estimated from $1{,}000$ independent null samples, and
their false-activation probabilities are evaluated on a separate collection
of $2{,}000$ null samples. The resulting blockwise false-activation
probability is $0.046$ for block thresholding and ranges from $0.044$ to
$0.066$ for coordinatewise thresholding across the considered bases, close
to the nominal level $0.05$. Risks are evaluated at $n=32{,}000$ using
$500$ independent samples from the target density. The same empirical
coefficient vectors are used for all $Q_k$, making the comparison paired.

Figure~\ref{fig:sensitivity-experiments}\textbf{(a)} shows that the risk of
coordinatewise soft thresholding depends strongly on the selected
eigenbasis. Its mean degree-$12$ risk increases monotonically from
approximately $3.22\times10^{-4}$
in the signal-aligned basis to $2.86\times10^{-3}$
in the maximally spread basis. By contrast, the mean block-thresholding risk
is approximately
$1.11\times10^{-3}$
and is constant across all basis rotations; the largest observed numerical
discrepancy was below $2.2\times10^{-18}$. The coordinatewise risk becomes
larger than the block risk between $k=3$ and $k=4$.

The experiment is not intended to show that block thresholding uniformly
dominates every coordinatewise procedure. Indeed, coordinatewise thresholding
can be substantially more accurate in the oracle-aligned basis $k=1$.
Rather, it shows that this advantage relies on an unknown and geometrically
arbitrary choice of eigenbasis. The block estimator removes this dependence:
its performance is unchanged when the same spectral component is represented
sparsely or diffusely within the eigenspace.

In the next experiment, we vary the multiplicative constant
$c_\tau$ in the threshold $\tau_\ell$:
\[
    \tau_\ell
    =
    c_{\tau}
    \left[
        \sqrt{
            \frac{b(d_\ell+\xi_n)}{n}
        }
        +
        (1+\sqrt b)
        \frac{\sqrt{d_\ell}\,\xi_n}{n}
    \right],
\]
over
\[
    c_\tau\in\{0.75,0.90,1.05\},
\]
while keeping all other parameters unchanged. To reduce Monte Carlo
variability in this comparison, all three estimators are evaluated using the
same empirical spectral coefficients. The results are shown in
Figure~\ref{fig:sensitivity-experiments}\textbf{(b)}, together with the exact expected
risk of the dense estimator with fixed cutoff $L=12$ and the exact oracle
dense risk obtained by minimizing over the nested family of spectral cutoffs.

\begin{figure}[H]
    \centering
    \includegraphics[width=\linewidth]{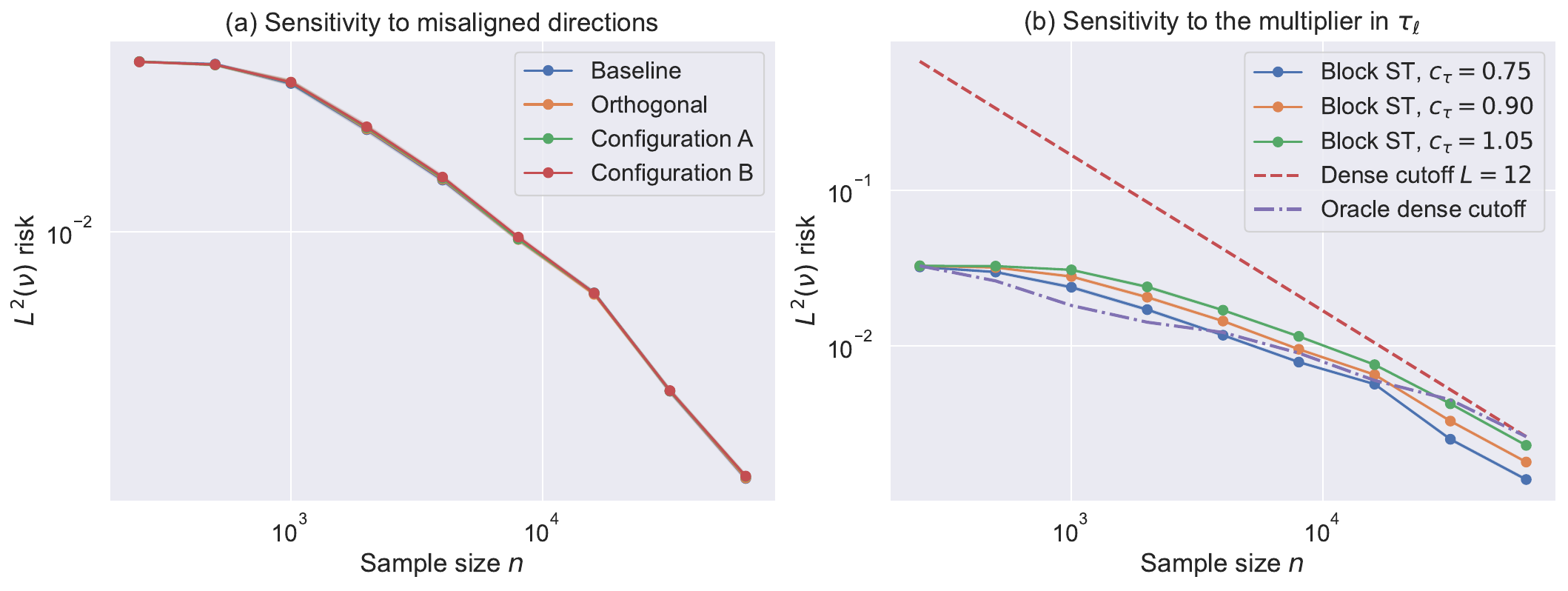}
    \caption{
Sensitivity experiments.
\textbf{(a)} Dependence of the degree-$12$ risk on the orthonormal
eigenbasis of $E_{12}$ at $n=32{,}000$. The target density, the observations,
and the degree-$12$ spectral component are identical for all values of $k$;
only the basis is changed. The basis $Q_k$ is chosen so that the true
coefficient vector has $k$ equal nonzero coordinates, ranging from the
signal-aligned representation $k=1$ to the maximally spread representation
$k=25$. Coordinatewise soft thresholding is applied in each rotated basis,
whereas block soft thresholding is radial and therefore basis invariant.
For each procedure, the threshold is calibrated under the uniform density
to a blockwise false-activation probability $\alpha=0.05$. Calibration uses
$1{,}000$ null samples, and risks are estimated from $500$ independent
samples from the target density. The shaded regions are pointwise $95\%$
Student-$t$ confidence intervals.
\textbf{(b)} Mean $L^2(\nu)$ risks of the block-shrinkage estimator for
threshold multipliers $c_\tau\in\{0.75,0.90,1.05\}$, together with the
exact expected risks of the dense estimator with fixed cutoff $L=12$ and
the oracle dense estimator using the risk-minimizing cutoff. The three
block-shrinkage procedures are evaluated using the same Monte Carlo samples.
}
\label{fig:sensitivity-experiments}
\end{figure}

In this finite-block example, the smaller multiplier $c_\tau=0.75$ yields
the lowest risk throughout the considered sample-size grid. For instance,
at $n=64{,}000$, the risks corresponding to
$c_\tau=0.75$, $0.90$, and $1.05$ are approximately
$1.38\times10^{-3}$, $1.79\times10^{-3}$, and
$2.29\times10^{-3}$, respectively, whereas the risk of the dense estimator
with cutoff $L=12$ is $2.62\times10^{-3}$. A smaller multiplier allows the
weaker active blocks to be retained earlier, while a larger multiplier
produces more aggressive shrinkage and consequently a larger bias.


\end{appendix}
\end{document}